\numberwithin{equation}{section}
\newtheorem{theorem}{Theorem}[section]
\newtheorem{lemma}[theorem]{Lemma}
\newtheorem{prop}[theorem]{Proposition}
\newtheorem{corollary}[theorem]{Corollary}
\DeclareSymbolFont{boldoperators}{OT1}{cmr}{bx}{n}
\edef\bar{\unexpanded{\protect\mathaccentV{bar}}\number\symboldoperators16}
\def \dovet {\frac{\del}{2}}
\def \bpf {\begin{proof}}
\def \epf {\end{proof}}
\def \beq {\begin{equation}}
\def \eeq {\end{equation}}
\def \bsp{\begin{split}}
\def \esp{\end{split}}
\def \wt {\widetilde}
\def \det {\operatorname{det}}
\def \id {{\operatorname{I}}}
\def \vsig {{\varsigma}}
\def \mca {{\mathcal A}}
\def \mcb {{\mathcal B}}
\def \mcd {{\mathcal D}}
\def \mce {{\mathcal E}}
\def \mcg {{\mathcal G}}
\def \mch {{\mathcal H}}
\def \mcl {{\mathcal L}}
\def \mcm {{\mathcal M}}
\def \mcw {{\mathcal W}}
\def \mcp {{\mathcal P}}
\def \mcq {{\mathcal Q}}
\def \mcr {{\mathcal R}}
\def \mcu {{\mathcal U}}
\def \mcv {{\mathcal V}}
\def \mcx {{\mathcal X}}
\def \mcy {{\mathcal Y}}
\def \mbc {{\mathbb C}}
\def \mh {{\mathbb H}}
\def \mb {{\mathbb B}}
\def \mr {{\mathbb R}}
\def \mn {{\mathbb N}}
\def \ms {{\mathbb S}}
\def\ha {\frac{1}{2}}
\def \tha  {\frac32}
\def \oq {\frac{1}{4}}
\def \nsq {\frac{n^2}{4}}
\def \ka {\kappa}
\def \Tau {{\mathcal{T}}}
\def \intx {\mathring{X}}
\def \vol {\operatorname{vol}}
\def \loc {\operatorname{loc}}
\def \det {\operatorname{det}}
\def \im {\operatorname{Im}}
\def \re {\operatorname{Re}}
\def \defeq {\stackrel{\operatorname{def}}{=}}
\def \mcn {\mathcal{N}}
\def \msn {{\mathbb S}^n}
\def \ga {{\gamma}}
\def \eps {\varepsilon}   
\def \Ups {\Upsilon}   
\def \vphi {\varphi}
\def \vrho {\varrho}
 \def \rhz {R_{H_0}}
\def \rh {R_{H}}
\def \la {\lambda}   
\def \lan {\langle}   
\def \ran {\rangle}   
\def \del {\delta}   
\def \p {\partial}
\def \novt {\frac{n}{2}}
\def \fnf{\frac{n^2}{4}}
\def \beqq {\begin{equation}}
\def \eeqq {\end{equation}}
\numberwithin{equation}{section}
\begin{document}
\title[Eigenvalues of Schr\"odinger operators on hyperbolic manifolds]{The Distribution of Negative Eigenvalues of Schr\"odinger Operators on Asymptotically Hyperbolic Manifolds}
\author{Ant\^onio S\'a Barreto}
\address{Ant\^onio S\'a Barreto\newline \indent Department of Mathematics Purdue University \newline
\indent 150 North University Street, West Lafayette Indiana,  47907, USA}
\email{sabarre@purdue.edu}
\author{Yiran Wang}
\address{Yiran Wang\newline \indent Department of Mathematics Emory University \newline \indent 400 Dowman Drive
Atlanta, GA 30322}
\email{yiran.wang@emory.edu}
\thanks{ The first author is partly supported by the Simons Foundation grant \#848410. The second author is partly supported by National Science Foundation under grant DMS-2205266. }
\keywords{Spectral problems, PDEs on manifolds, asymptotically hyperbolic manifolds,  Schr\"odinger operators, AMS mathematics subject classification: 35P15, 35P20, 35R01 and 58J50}

\begin{abstract} 
We study the asymptotic behavior of the counting function of negative eigenvalues of  Schr\"odinger operators with real valued potentials on asymptotically hyperbolic manifolds. We establish conditions on the potential that determine if there are finitely or infinitely many negative eigenvalues.  In the latter case,  they may only accumulate at zero and we obtain the asymptotic behavior of the counting function of eigenvalues in an interval $(-\infty, -E)$ as $E\rightarrow 0.$
\end{abstract}

\maketitle


\section{Introduction}

We are concerned with the following type of problem:  Let $(X,g)$ be a non-compact complete $C^\infty$ Riemannian manifold and let  $\Delta_g$ be its (positive) Laplacian. Suppose that $V$ is a real valued potential such that $V<0$ near infinity and the corresponding Schr\"odinger operator  $H=\Delta_g+V$ is self-adjoint.  Furthermore, suppose its  point spectrum $\sigma_{p}(H)\subset (-E_0,0)$ and the eigenvalues only accumulate at zero.  The problem is to find conditions on $V$ which determine whether the point spectrum is finite or infinite and if it is infinite, find the asymptotic behavior  of the number of eigenvalues (counted with multiplicity) in an interval $(-E_0,-E)$ as $E\downarrow 0.$

In the Euclidean case, it has been shown, see for example \cite{RS4},  that if $V$ is bounded and if that near infinity $V(z)\leq - C |z|^{-2+\del},$ $\del > 0,$ then $H$ has infinitely many eigenvalues, while  if $V\geq -C|z|^{-2-\del},$ $\del>0$ there are finitely many eigenvalues. The threshold decay of $V(z)$ for $H$  to have finitely or infinitely many eigenvalues is therefore $V(z)\sim  F(\omega)|r|^{-2},$ $r=|z|,$  $z=r\omega,$ $\omega\in \ms^{n-1}.$   Moreover, an application of Hardy's inequality shows that if  $V(z)\sim -c r^{2},$ there are finitely many eigenvalues when $c<\frac{(n-1)^2}{4}$ and infinitely many if $c>\frac{(n-1)^2}{4}.$  Precise results on the asymptotics of the counting function of eigenvalues  as $E\rightarrow 0,$ in the case where $V(z)= r^{-2}( F(\omega) + \mce(r,\omega))$  with $\mce(r,\omega)=o((\log r)^{-1-\eps})$ as $r\rightarrow \infty$ were obtained by Kirsch and Simon \cite{KS} and Hassell and Marshal \cite{HM}.  We are not aware of similar results for asymptotically Euclidean manifolds.

 While this problem has been well studied in the Euclidean space,  it seems that it has not been studied as much in hyperbolic space.  We will  work in the class of asymptotically hyperbolic manifolds in the sense of  \cite{FefGra,MM}, for which the hyperbolic space serves as a model.  Akutagawa and Kumura \cite{AkuKum} have established bounds on the potential, similar to those in Euclidean space, which determine if discrete spectrum is finite or infinite (as in the first two items of Corollary \ref{low-up} below), but they do not  establish bounds on the counting function of negative eigenvalues and do not consider the cases of critical decay, as in our Theorems  \ref{main1} and \ref{doubleT}.   We should also mention the work of Mazzeo and McOwen \cite{MMC}.  While the problems considered in \cite{MMC}  are somewhat the opposite of the ones we study here, there are some similarities. 
  
  The Poincar\'e model of hyperbolic space $\mh^{n+1}$ with constant curvature $-1$ is given by the Euclidean ball of radius one
 \beq \label{HYP}
 \mb^{n+1}=\{z\in \mr^{n+1}: |z|<1\} \text{ equipped with the metric } g_0(z)=\frac{4 dz^2}{(1-|z|^2)^2}.
 \eeq

This is the interior of a  $C^\infty$ manifold equipped with a metric which is singular at its boundary.  The function  $\vsig(z)= 1-|z|^2 \in C^\infty(\overline{\mb^{n+1}}),$ satisfies $\vsig(z)\geq 0,$ $\vsig^{-1}(0)= \msn= \p \mb^{n+1}$ and $d\vsig(z)\not=0$ if $|z|=1,$ and so $\vsig$ is called a defining function of $\p \mb^{n+1},$  and moreover $\vsig^2 g_0= 4dz^2$
is a $C^\infty$ Riemannian metric on the closure $\overline{ \mb^{n+1}}.$  Following \cite{FefGra,MM,Megs}, one can extend this notion to any $C^\infty$ Riemannian manifold with boundary.

 Throughout this paper,  $\intx$ will denote the interior of a
$C^\infty$ compact manifold $X$ with boundary $\p X$  of dimension $n+1.$    We say that $\varsigma\in C^\infty(X)$ is a defining function of $\p X,$ or a boundary defining function, if  $\varsigma > 0$ in $\intx,$ $\varsigma = 0$ at $\p X$ and $d\varsigma \neq 0$ at $\p X$.  We assume $\intx$ is equipped with a $C^\infty$ Riemannian metric $g$ such that $G = \varsigma^2 g$ is non-degenerate at $\p X$  and so $(\bar X,G)$ is a $C^\infty$ compact Riemannian manifold with boundary.   According to \cite{Ma1}, the sectional curvature of $(\intx,g)$ converges to $-|d\varsigma|_G$  along any curve that goes towards $\p X.$    The manifold $(\intx, g)$ is called an asymptotically hyperbolic manifold, or AHM, if  $|d\varsigma|_G = 1$ at $\p X.$ One might relax this assumption if  $X$ has more than one boundary component  and  $\p X= Y_1 \sqcup Y_2 \ldots \sqcup Y_N$ and $|d\varsigma|_G = \ka_j$ at $Y_j,$ $\ka_j$ is a constant.

The  hyperbolic space  serves as a model for this class of manifolds, and its quotients by certain discrete groups of fractional linear transformations having a geometrically finite fundamental domain without cusps at infinity are also examples of such manifolds, see \cite{Agmon1,MM,Perry1,Perry2}.    In fact our results apply for manifolds with more than one boundary component and with different (constant) asymptotic curvatures at each end. One important example from mathematical physics  where this occurs is the stationary model of the de Sitter-Schwarzschild model of black holes discussed in \cite{SaZw}. In this case the manifold is $\intx= (a, b) \times \ms^n,$ and the asymptotic curvatures are different on both ends.

  If $(\intx,g)$ is an AHM and $\varsigma\in C^\infty(X)$ is a boundary defining function, then by assumption $G=\vsig^2 g$ is non-degenerate up to $\p X,$ but  the metric $\vsig^2 G|_{\p X}=h_0$  obviously depends on the choice of $\vsig.$  In fact, given any two boundary defining functions $\vsig$ and $\tilde \vsig,$ we must have $\vsig=a(z)\tilde\vsig,$ with $a>0.$ If $\tilde G={\tilde \vsig}^2 g,$  then $G=a^2 \tilde G,$  and in particular $G|_{\p X}=(a^2 \tilde G)|_{\p X},$ and hence  $\vsig^2g$ determines a conformal class of metrics at $\p X.$   As shown in \cite{Gr,JosSaB}, given a representative $h_0$ of the class $[\vsig^2 g|_{\p X}]$ there exists  $\mathfrak{e}>0$ and a unique boundary defining function $x$ defined on a collar neighborhood $U$ of $\p X$  and a map
$\Psi: [0,\mathfrak{e}) \times \p X \longrightarrow U$ such that 
\beqq\label{prod}
\Psi^* g = \frac{dx^2}{x^2} + \frac{h(x)}{x^2}, \;\ h(0)=h_0,
\eeqq
where  $h(x)$ is a $C^\infty$ one-parameter family of  Riemannian metrics on $\p X.$   Of course, $x$ can be extended (non-uniquely) from the collar neighborhood $U$ to a boundary defining function $x\in C^\infty(X).$

 For example,  in the case of the hyperbolic space  \eqref{HYP}, the geodesic distance with respect to the origin is given by 
 \[
 r= \log\left(\frac{1+|z|}{1-|z|}\right),
 \]
and using polar coordinates $(r,\theta),$ $\theta=\frac{z}{|z|},$  with respect to this distance $r,$  the metric $g_0$  
is given by
 \[
 g_0= dr^2 + (\sinh r)^{2} d\theta^2,
 \]
 where $d\theta^2$ is the standard metric on the sphere.  If we set $x=e^{- r},$ then 
\beq\label{prod10}
g_0= \frac{dx^2}{x^2} + \frac{(1-x^2)^2}{4} \frac{d\theta^2}{x^2}.
\eeq
While $x$ is not smooth on $X$ because $|z|$ is not $C^\infty$ at $\{z=0\},$  it is $C^\infty$ near $\p \mb^{n+1},$ and one can modify it in the interior of $X$ to satisfy the definition of a boundary defining function and still keep \eqref{prod10} near $\p \mb^{n+1}.$

Let $\Delta_{g}$ denote the  (positive) Laplace operator on an AHM $(\intx, g).$   We know from \cite{Ma1,Ma2}, 
that the spectrum of $\Delta_g,$ denoted by $\sigma(\Delta_g),$ satisfies $\sigma(\Delta_g)= \sigma_{pp}(\Delta_g) \cup \sigma_{ac}(\Delta_g),$ where $\sigma_{pp}(\Delta_g)$  consists of a finite number of eigenvalues in $(0, \nsq)$ with finite multiplicity and $\sigma_{ac}=[\nsq,\infty)$ is the absolutely continuous spectrum.  There are no eigenvalues in $[\nsq,\infty),$ see for example \cite{bouclet,Ma2}.   We shall work with 
$\Delta_g-\nsq$ which has continuous spectrum  $[0,\infty).$  Let $V \in L^\infty(X)$  be real valued and such that $V(z)<0$ near $\p X$ and $V(z)\rightarrow 0$ as $z\rightarrow \p X.$ We shall denote
\beq\label{defoP}
H_0=\Delta_g-\nsq \text{ and }  H=\Delta_g-\nsq+V.
\eeq
We will show that  under the assumptions on the rate of decay of the potential $V(z)$  as $z\rightarrow \p X,$ the point spectrum of $H$
consists of eigenvalues of finite multiplicity  contained in some interval $[-E_0,0),$ which only possibly accumulate at zero.  Its essential spectrum $\sigma_{ess}(H)=[0,\infty)$ and it has no embedded eigenvalues, including the bottom of $\sigma_{ess}(H).$     We want to count negative eigenvalues of $H$ and other operators, and so for an operator $T$ and for $E\geq 0,$  we define the counting function 
\beq\label{counting}
N_E(T)= \# \{ \mu \in (-\infty,-E) \cap \sigma_{pp}(T),  E\geq 0 \text{ counted with multiplicity} \}.
\eeq

In  coordinates for which \eqref{prod} is valid, the Laplacian with respect to $g$ is given by
\beq\label{lap1}
  \Delta_g= -(x\p_x)^2-nx\p_x - x^2 A(x,y) \p_x+ x^2 \Delta_{h(x)},
\eeq
where $\Delta_{h(x)}$ is the Laplacian with respect to the metric $h(x)$ on $\p X,$ and $A=\ha \p_x \log |h|,$ where $|h|$ is the volume element of the metric $h.$   It is convenient to define $\rho\defeq -\log x,$  and so
\beq\label{deflap}
 \Delta_g= -\p_\rho^2-n\p_\rho - e^{-\rho} \mca(\rho,y) \p_\rho+ e^{-2\rho} \Delta_{\wt h(\rho)},
 \eeq
where $\mca(\rho,y)=A(e^{-\rho},y)$ and $\wt h(\rho)=h(e^{-\rho}).$

Throughout the paper,  $(\intx, g)$ will denote a $n+ 1$ dimensional asymptotically hyperbolic manifold.  Its  closure $X$ is a $C^\infty$ manifold with boundary,  and $x \in C^\infty(X)$ is a boundary defining function such that \eqref{prod} holds for $x\in [0,\mathfrak{e}).$  We define  $\rho\defeq -\log x.$  We assume that  $V\in L^\infty(X)$ {\it is real valued,} $H_0,$ $H$  and $N_E(H)$ will be defined as in \eqref{defoP} and \eqref{counting} respectively.

\begin{theorem}\label{main2} Suppose that  in some interval $\rho\in(\rho_0,\infty),$ there exists a constant $c>0$ such that
\beq\label{hyp-pot}
V(e^{-\rho},y) = -c\rho^{-2+\del} + O(\rho^{-2+\del}(\log \rho)^{-\eps}), \text{ for } \eps>0 \text{ and }  \del<2,  \text{ as } \rho\rightarrow \infty.
\eeq

If $\del<0,$ then $N_0(H)<\infty,$ but if $\del\in (0,2),$ then $N_0(H)=\infty$ and 
\beq\label{subc}
\log \log N_E(H)=  \frac{1}{2-\del} \log E^{-1}+ O(1), \text{ as } E\rightarrow 0.
\eeq
\end{theorem}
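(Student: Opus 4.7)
The plan is to reduce the spectral problem for $H$ to a family of one-dimensional Schr\"odinger problems indexed by the angular modes of $\Delta_{\p X}$, count eigenvalues of each, and sum using Weyl's law on $\p X$. I would first fix $\rho_1 > \rho_0$ and apply Dirichlet--Neumann bracketing on the hypersurface $\{\rho=\rho_1\}$, splitting $X$ into a compact piece $X_c$, which contributes only $O(1)$ to $N_E(H)$ as $E\downarrow 0$, and the end $X_\infty\cong (\rho_1,\infty)\times \p X$ on which \eqref{prod} holds. On $X_\infty$ the multiplication $u\mapsto e^{n\rho/2}u$ is an $L^2$-unitary that cancels both the first-order $\p_\rho$ term in \eqref{deflap} and the spectral shift $-\nsq$, transforming $H$ into
\[
\widetilde H = -\p_\rho^2 + e^{-2\rho}\Delta_{\wt h(\rho)} + V(\rho,y) + R,
\]
where $R$ is a first-order differential operator with coefficients of size $O(e^{-\rho})$. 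A second bracketing in the angular direction, expanding trial functions in an $L^2(\p X,h_0)$-orthonormal basis $\{\phi_k\}$ of $\Delta_{h_0}$-eigenfunctions with eigenvalues $0=\mu_0\le\mu_1\le\cdots\to\infty$, decouples $\widetilde H$ (up to two-sided perturbations controlled by $\wt h(\rho)-h_0$, by $R$, and by the angular part of $V$) into a direct sum of half-line operators
\[
L_k^\pm = -\p_\rho^2 + e^{-2\rho}\mu_k - c\rho^{-2+\del}\bigl(1+O((\log\rho)^{-\eps})\bigr), \quad \rho\in(\rho_1,\infty),
\]
with Dirichlet boundary condition at $\rho=\rho_1$.

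For $\del\in(0,2)$, each $L_k^\pm$ has a single-well potential $U_k(\rho)=e^{-2\rho}\mu_k-c\rho^{-2+\del}(1+o(1))$ minimized at the point $\rho_k^*$ determined by $e^{-2\rho_k^*}\mu_k\asymp (\rho_k^*)^{\del-3}$, with minimum value $\asymp -c(\rho_k^*)^{\del-2}$. Hence $L_k^\pm$ admits an eigenvalue below $-E$ if and only if $\rho_k^*\lesssim R(E):=(c/E)^{1/(2-\del)}$, equivalently $\mu_k\lesssim e^{2R(E)}R(E)^{\del-3}$, and in that case standard 1D Bohr--Sommerfeld counting gives $N_E(L_k^\pm)=O(R(E))$, polynomial in $E^{-1}$. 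Using Weyl's law on the $n$-dimensional compact boundary, $\#\{k:\mu_k\le\Lambda\}\sim c_{\p X}\Lambda^{n/2}$, the number of contributing angular modes is $K(E)\asymp \exp(nR(E))$ up to polynomial factors in $R(E)$, so
\[
N_E(H)\asymp \sum_{k:\mu_k\lesssim e^{2R(E)}R(E)^{\del-3}} N_E(L_k^\pm)=\exp\bigl(nR(E)+O(\log R(E))\bigr),
\]
and taking $\log\log$ yields $\log\log N_E(H)=\log R(E)+O(1)=\tfrac{1}{2-\del}\log E^{-1}+O(1)$, which is \eqref{subc}. When $\del<0$ the radial potential decays strictly faster than $\rho^{-2}$, Bargmann's bound $N_0(L_k)\le 1+\int_{\rho_1}^\infty\rho|V(\rho)|\,d\rho<\infty$ applies because $\int\rho^{\del-1}\,d\rho<\infty$, and only finitely many $\mu_k$ allow $U_k$ to dip negative, giving $N_0(H)<\infty$.

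The chief technical obstacle is the angular decoupling. One must arrange the Dirichlet--Neumann sandwich so that the residual perturbations (the $(\log\rho)^{-\eps}$ correction in $V$, the difference $\wt h(\rho)-h_0$, the first-order remainder $R$, and any $y$-dependent part of $V$) modify each potential $U_k$ by amounts that are $o(1)$ relative to the main radial term $c\rho^{-2+\del}$, uniformly in the classically allowed region $\rho\le R(E)$. Because the final quantity is $\log\log N_E$, multiplicative $O(1)$ perturbations of the turning point $R(E)$ vanish into the $O(1)$ error, but the $(\log\rho)^{-\eps}$ slack in \eqref{hyp-pot} is exactly what is needed to force the upper and lower bounds from $L_k^-$ and $L_k^+$ to share the exponent $\tfrac{1}{2-\del}$ in the final asymptotic.
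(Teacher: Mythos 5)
Your treatment of the case $\del\in(0,2)$ is, in outline, the same route the paper takes: Dirichlet--Neumann bracketing into a compact piece plus the end, two-sided comparison of the end with a product model, separation of variables into half-line operators indexed by the eigenvalues $\mu_k$ of $\Delta_{h_0}$, a per-mode count that is only polynomial in $E^{-1}$, a mode cutoff $\mu_k\lesssim E\,e^{2R(E)}$ with $R(E)\sim (c/E)^{1/(2-\del)}$, and Weyl's law on $\p X$. The paper substitutes a Pr\"ufer-angle/Sturm oscillation count (together with a proof, in an appendix, that the number of zeros of the Cauchy solution equals the eigenvalue count) for your Bohr--Sommerfeld assertions and your ``eigenvalue below $-E$ iff $\rho_k^*\lesssim R(E)$'' claim; at the level of $\log\log N_E(H)$ these give the same exponent, so this half of your proposal is essentially the paper's argument with the per-mode estimates asserted rather than proved.

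The genuine gap is in the case $\del<0$. Your argument is: (i) Bargmann's bound gives $N_0(L_k)<\infty$ for each $k$ since $\int \rho\cdot\rho^{\del-2}\,d\rho<\infty$, and (ii) ``only finitely many $\mu_k$ allow $U_k$ to dip negative.'' Claim (ii) is false: for every $k$ the potential $U_k(\rho)=e^{-2\rho}\mu_k-c\rho^{-2+\del}(1+o(1))$ is negative for all sufficiently large $\rho$, because the exponential term is eventually dominated by the polynomially decaying attractive term, no matter how large $\mu_k$ is. Consequently (i) and (ii) together do not control the total: Bargmann gives a finite, but possibly nonzero, count for each of infinitely many modes, and the sum could a priori be infinite. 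What is actually true, and what the paper proves, is that for $\rho_1$ large enough \emph{no} mode contributes at all: since $\del<0$, one has $c\rho^{-2+\del}+|\text{error terms}|\le \oq\,\rho^{-2}$ on $(\rho_1,\infty)$, so by Hardy's inequality on the half line applied to functions vanishing at $\rho_1$ (equivalently, by the paper's observation that after the change of variables the effective potential is strictly positive, so the Cauchy solution has no zeros) each Dirichlet half-line operator $L_k$ is nonnegative for every $\mu_k\ge 0$. Since the finiteness statement only requires the upper (Dirichlet) side of the bracketing, this yields $N_0(H)\le N_0(\Upsilon_0^D)+0<\infty$. Your Bargmann step is harmless but neither needed nor sufficient by itself; the missing ingredient is precisely the $\oq\rho^{-2}$ Hardy threshold argument, uniform in the angular mode.
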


Notice that if $\wt x=e^{\vphi(x,y)}  x,$ $\vphi\in C^\infty,$  is another boundary defining function, then
\[
\wt \rho = -\log \wt x = -\vphi- \log x = \rho+ O(1), \text{ as } \rho\rightarrow \infty,
\]
so \eqref{hyp-pot} does not depend on the choice of  $x.$

In the threshold case $\del=0,$ case we have  the following

\begin{theorem}\label{main1}   Suppose that  in some interval $\rho\in(\rho_0,\infty),$
\beq\label{hyp-potTC}
V(e^{-\rho},y) =-c\rho^{-2}+ o(\rho^{-2}(\log \rho)^{-\eps}), \; c>0, \;\ \eps>0, \text{ as } \rho\rightarrow \infty.
\eeq
 If $c<\oq,$  then   $N_0(H)<\infty,$ but if $c > \oq,$ then  $N_0(H)=\infty$ and 
\beq \label{bdhs1}
\log \log N_E(H)= -\ha \log E+ O(1) \text{ as } E\rightarrow 0.
\eeq
\end{theorem}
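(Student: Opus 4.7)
\emph{Proof outline for Theorem \ref{main1}.} The plan is to reduce to a half-line problem by cutting off away from $\p X,$ conjugating out the volume factor, and then decomposing into eigenmodes of the boundary Laplacian, in the spirit of the Euclidean arguments of Kirsch--Simon \cite{KS} and Hassell--Marshall \cite{HM}. Fix a large $\rho_1$ and perform Dirichlet--Neumann bracketing across $\{\rho=\rho_1\}$: the interior piece has compact resolvent and contributes only finitely many eigenvalues uniformly in $E,$ so $N_E(H) = N_E(H_{\mathrm{coll}}^D) + O(1).$ Conjugating by $e^{n\rho/2}$ and using \eqref{deflap} one obtains
\[
e^{n\rho/2} H e^{-n\rho/2} = -\p_\rho^2 + e^{-2\rho}\Delta_{\wt h(\rho)} + V(e^{-\rho},y) + R,
\]
acting on a weighted $L^2$ space on $(\rho_1,\infty)\times\p X,$ with $R$ a first-order operator whose coefficients are $O(e^{-\rho}).$ Since $\wt h(\rho)=h_0+O(e^{-\rho}),$ one brackets the transverse Laplacian by those of $(1\pm\eta)h_0$ and decomposes into eigenmodes $\phi_j$ of $\Delta_{h_0}$ with eigenvalues $\mu_j,$ obeying the Weyl law $\#\{j:\mu_j\le\mu\}\asymp \mu^{n/2}.$ This reduces the count to
\[
N_E(H) \asymp \sum_j N_E(L_j), \qquad L_j = -\p_\rho^2 + (1+o(1))\mu_j e^{-2\rho} - c\rho^{-2} + o(\rho^{-2}(\log\rho)^{-\eps}).
\]

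For $c<\oq,$ Hardy's inequality $\int|u'|^2\,d\rho \geq \oq\int\rho^{-2}|u|^2\,d\rho$ yields, for every $j$ and every sufficiently large $\rho_1,$
\[
\langle L_j u,u\rangle \geq \int_{\rho_1}^\infty \bigl[(\oq-c-o(1))\rho^{-2} + \ha\mu_j e^{-2\rho}\bigr]|u|^2\,d\rho \geq 0,
\]
so every $L_j$ is non-negative and $N_0(H)<\infty.$

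For $c>\oq,$ each $L_j$ restricted to the region $\{c\rho^{-2}>\mu_j e^{-2\rho}\}$ is an attractive inverse-square operator with coupling exceeding the Hardy threshold, hence has infinitely many negative eigenvalues. The classically allowed region at energy $-E$ is $\{c\rho^{-2}-\mu_j e^{-2\rho}>E\},$ whose turning points are approximately $\ha\log\mu_j$ on the left and $\sqrt{c/E}$ on the right; it is non-empty precisely when $\mu_j \leqs e^{2\sqrt{c/E}},$ and a Birman--Schwinger/Bohr--Sommerfeld analysis shows that each such mode contributes $N_E(L_j) = O(\log E^{-1}).$ Summing over admissible modes via the transverse Weyl count gives $\log N_E(H) \asymp n\sqrt{c/E},$ hence
\[
\log\log N_E(H) = -\ha\log E + O(1).
\]
The matching lower bound is obtained by constructing, for each admissible $\mu_j$ and each integer $k$ in a range of size $\log E^{-1},$ orthogonal trial functions $\varphi_{j,k}(\rho)\phi_j(y)$ with disjoint radial supports inside the classical region and Rayleigh quotients below $-E.$ The principal obstacle is aligning the mode-by-mode Bohr--Sommerfeld count with the exponential Weyl proliferation of transverse modes and verifying that the $\rho$-dependence of $\wt h(\rho)$ together with the subprincipal error $o(\rho^{-2}(\log\rho)^{-\eps})$ is absorbed into the $O(1)$ remainder on both sides.
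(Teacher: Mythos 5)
Your outline is correct in its overall architecture and that architecture coincides with the paper's: Dirichlet--Neumann bracketing across a collar $\{\rho=\rho_1\}$, reduction to model operators over the collar, separation into eigenmodes of the boundary Laplacian, a one-dimensional count of $O(\log E^{-1})$ per admissible mode, admissibility $\mu_j\lesssim e^{C/\sqrt{E}}$, and Weyl's law on $\p X$ to convert the mode count into \eqref{bdhs1}. Where you genuinely diverge is in the one-dimensional step. For $c<\oq$ you use Hardy's inequality on the half-line, whereas the paper substitutes $u=\rho^{\ha}w$, $s=\log\rho$, so that the effective potential becomes $(\oq-c)+o(1)>0$ and positivity forces the solution of the Cauchy problem to have no zeros; for $c>\oq$ you invoke a Birman--Schwinger/Bohr--Sommerfeld count and disjointly supported trial functions, whereas the paper counts zeros via the Pr\"ufer angle $\theta=\tan^{-1}(w/w')$ after the same logarithmic change of variables, and converts zeros into eigenvalues through Proposition \ref{lmzero} (Sturm oscillation, Appendix \ref{Zeros}). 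Your route is more in the variational spirit of \cite{KS,HM} and avoids the oscillation machinery; the paper's route buys two things you currently only assert: (i) a per-mode bound that is \emph{uniform in} $\mu_j$ and valid on all of $[\rho_1,\infty)$, obtained cheaply because $|d\theta/dt|\le 3$ on the region where the effective perturbation is between $-1$ and $\tfrac32$, and because outside that region positivity of the potential excludes zeros altogether; and (ii) a clean absorption of the non-product errors ($h(x)=h_0+O(x)$, the volume factor, and the $o(\rho^{-2}(\log\rho)^{-\eps})$ term), which the paper handles at the quadratic-form level (Proposition \ref{abobel}, Lemma \ref{chvar}) so that only bounded $O(e^{-\rho})$ potential-type remainders survive, rather than the first-order remainder $R$ your conjugation produces (absorbing $R$ eats a small fraction of the kinetic term and hence of the Hardy constant, harmless for $c<\oq$ strict but worth saying). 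None of these issues is fatal --- the constants they perturb disappear into the $O(1)$ after two logarithms, and your Weyl exponent $\mu^{n/2}$ versus the paper's $\mu^n$ is likewise immaterial at that level --- but the steps you flag as ``the principal obstacle'' are precisely where the paper spends its technical effort, so to make your version complete you would need to prove the asserted uniform bound $N_E(L_j)=O(\log E^{-1})$ (e.g.\ by exactly the oscillation argument, or a Bargmann/Calogero-type bound on the allowed region) and carry out the error absorption explicitly.
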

Our proofs in fact give somewhat more precise  upper and lower bounds for $N_E(H),$ and  \eqref{subc} and \eqref{bdhs1} are used to unify these bounds and provide the asymptotic behavior of iterated logarithms of $N_E(H).$ 
The methods we use do not allow us to treat the case where $c$ is a function of $y.$ However, we can use these results to prove

\begin{corollary}\label{low-up}   Suppose that  in some interval $\rho\in(\rho_0,\infty),$
\beq\label{uplow-del}
-c_1\rho^{-2+\del} \leq V(e^{-\rho},y) \leq -c_2\rho ^{-2+\del }, 
\eeq
then we can say that
\begin{enumerate}[1.]
\item If $\del<0,$ then $N_0(H)<\infty.$
\item If $\del \in (0,2),$ then  $N_0(H)=\infty$ and \eqref{subc} holds.
\item If  $\del=0$ and $c_1<\oq,$  then $N_0(H)<\infty.$  
\item If  $\del=0$ and  $c_2 > 1/4$, then  $N_0(H)=\infty$ and  \eqref{bdhs1} holds.
\end{enumerate}
\end{corollary}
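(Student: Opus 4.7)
The plan is to deduce the four cases of the corollary from Theorems~\ref{main2} and \ref{main1} by a monotonicity (min--max) comparison, applied to the two model potentials $W_1(z)=-c_2\rho^{-2+\del}$ and $W_2(z)=-c_1\rho^{-2+\del}$ (with the convention $c_1\ge c_2$ implicit in \eqref{uplow-del}). Both model potentials trivially satisfy the hypotheses \eqref{hyp-pot} when $\del<2$, $\del\ne 0$, and \eqref{hyp-potTC} when $\del=0$, since in each case the error term in Theorems~\ref{main2}, \ref{main1} may be taken to vanish identically.

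First I would promote the local inequality \eqref{uplow-del}, which is only stated for $\rho>\rho_0$, to a global pointwise sandwich $\wtw_2\le V\le \wtw_1$ on all of $X$. Choose bounded real-valued potentials $\wtw_1,\wtw_2\in L^\infty(X)$ that coincide with $W_1,W_2$ for $\rho>\rho_0+1$, are smoothly interpolated on the collar, and are taken suitably large negative (resp.\ positive) on the compact set $\{\rho\le\rho_0\}$ so that $\wtw_2\le V\le \wtw_1$ everywhere; this is possible because $V\in L^\infty(X)$. Since $\wtw_i$ differs from $W_i$ only on a compact set, Theorems~\ref{main2} and \ref{main1} apply to the operators $\wtH_i=\Delta_g-\tfrac{n^2}{4}+\wtw_i$ with the same conclusions as for the pure model potentials, because modifying a bounded potential on a compact set is a relatively compact perturbation: it preserves the essential spectrum, leaves $N_E$ unchanged up to a finite additive error (bounded by the total multiplicity of negative eigenvalues created or absorbed by the compactly supported change), and in particular does not affect either the finiteness of $N_0$ or the $\log\log N_E$ asymptotics.

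With $\wtw_2\le V\le \wtw_1$ pointwise, the quadratic form inequality $\wtH_2\le H\le \wtH_1$ holds on the common form domain, so the min--max principle yields $\mu_k(\wtH_2)\le \mu_k(H)\le\mu_k(\wtH_1)$ for the increasingly ordered eigenvalues, which translates to
\[
N_E(\wtH_1)\le N_E(H)\le N_E(\wtH_2)\quad\text{for every } E\ge0.
\]
Case (1) ($\del<0$): Theorem~\ref{main2} applied to $\wtH_2$ gives $N_0(\wtH_2)<\infty$, hence $N_0(H)<\infty$. Case (3) ($\del=0$, $c_1<\tfrac14$): Theorem~\ref{main1} applied to $\wtH_2$ gives $N_0(\wtH_2)<\infty$, hence $N_0(H)<\infty$. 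Case (2) ($\del\in(0,2)$): Theorem~\ref{main2} applied to both $\wtH_1$ and $\wtH_2$ yields $\log\log N_E(\wtH_i)=\tfrac{1}{2-\del}\log E^{-1}+O(1)$; sandwiching and using that $\log\log$ is insensitive to bounded multiplicative perturbations proves \eqref{subc} for $H$. Case (4) ($\del=0$, $c_2>\tfrac14$, hence also $c_1\ge c_2>\tfrac14$): Theorem~\ref{main1} applies to both $\wtH_1$ and $\wtH_2$, each giving $\log\log N_E(\wtH_i)=-\tfrac12\log E+O(1)$, and the sandwich yields \eqref{bdhs1} for $H$.

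The only real subtlety is the compact-perturbation step in the second paragraph: one must verify that replacing $V$ by $\wtw_i$ on $\{\rho\le\rho_0\}$ changes $N_E$ by a bounded amount uniformly as $E\downarrow 0$, which is the standard fact that a bounded compactly supported multiplication operator produces finitely many additional eigenvalues below any negative level. The main comparisons are then entirely routine once the two model theorems are in hand.
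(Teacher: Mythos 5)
Your comparison argument is correct and is essentially the intended deduction of the corollary from Theorems \ref{main2} and \ref{main1}: sandwich $V$ between model potentials with constants $c_1$ and $c_2$, use min--max monotonicity of the counting function to get $N_E(\widetilde H_1)\le N_E(H)\le N_E(\widetilde H_2)$, and read off finiteness or the $\log\log$ asymptotics from the two ends, the constants being absorbed into the $O(1)$ terms. The only remark is that your compact-perturbation digression is unnecessary: the hypotheses \eqref{hyp-pot} and \eqref{hyp-potTC} constrain the potential only on an interval $\rho\in(\rho_0,\infty)$, so the theorems apply directly to the bounded modified potentials $\widetilde W_i$, which also spares you the uniform-in-$E$ claim about compactly supported perturbations of $N_E$ (a claim that would itself require the Dirichlet--Neumann bracketing argument to justify).
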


We can say more in the threshold case $c=\oq.$   For $\rho$ large enough, we define
\beq\label{deflj}
\log_{(j)} \rho= \log \log \ldots \log \rho, \text{ j times.}
\eeq

\begin{theorem}\label{doubleT}  Suppose that  in some interval $\rho\in(\rho_0,\infty),$ there exists a constant $c_1>0$ such that 

\beq\label{defV1}
V(e^{-\rho},y)= -\oq \rho^{-2}-c_1 \rho^{-2}(\log \rho)^{-2}+  O\left( \rho^{-2} (\log \rho)^{-2} (\log \rho)^{-\eps}\right), \; \eps>0.
\eeq
If $c_1<\oq,$ then $N_0(H)<\infty$ and  if $c_1>\oq,$ $N_0(H)=\infty$ and 
\beq \label{bdhs2}
\log_{(3)} N_E(H)= \log_{(2)} \left(E^{-1}\right)+ O(1) \text{ as } E\rightarrow 0.
\eeq

In fact this process keeps going indefinitely and the result holds if for some $\rho_0$ large, and $\rho\in [\rho_0,\infty),$ the potential has an expansion of the form 
\beq\label{def-GN}
\begin{split}
  V(e^{-\rho},y) & \ = V_0(\rho)+  O\left( \mcg_{N}(\rho) (\log \rho)^{-\eps}\right), \;\ \eps>0, \text{ where for some } N\in \mn, \\
 V_0(\rho) & \ = -\oq \rho^{-2}-\oq \sum_{j=1}^{N-1} \mcg_{j}(\rho) + c_N \mcg_{N}(\rho), \;\ c_N \text{ is a constant} \text{ and  } \\
& \mcg_{(j)}(\rho)  =\rho^{-2} (\log \rho)^{-2} (\log \log \rho )^{-2}\ldots (\log_{(j)} \rho)^{-2}.
\end{split}
\eeq
 The existence of infinitely many eigenvalues depends on whether $c_N<\oq,$ or $c_N>\oq.$  If $c_N<\oq$ there are only  finitely many eigenvalues, but if $c_N>\oq,$ 
\beq \label{bdhs4}
\log_{(N+2)}  N_E(H)=  \log_{(N+1)} (E^{-1})+ O(1) \text{ as } E\rightarrow 0.
\eeq
\end{theorem}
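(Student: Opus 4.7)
The plan is to proceed by induction on $N$, with Theorems \ref{main1} and \ref{main2} providing the base case, and with each inductive step implemented by a single Liouville change of variable $s = \log\rho$, $u(\rho) = \rho^{1/2} w(s)$. First I would reduce, via Dirichlet--Neumann bracketing, to the analysis of $H$ on a cylindrical end $(\rho_1,\infty)\times\p X$ with the product metric $d\rho^2 + e^{2\rho}h(e^{-\rho})$ and Dirichlet boundary at $\rho = \rho_1$; this changes $N_E(H)$ by only a bounded quantity. After conjugating by $e^{n\rho/2}$ to remove the first-order radial derivative, the model operator on $L^2(d\rho\,dy)$ is $-\p_\rho^2 + e^{-2\rho}\Delta_{\tilde h(\rho)} + V$ up to exponentially decaying errors. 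Decomposing into the eigenbasis $\{\phi_k,\mu_k^2\}$ of $\Delta_{h_0}$ on $\p X$ (with Weyl count $\#\{k:\mu_k\le\mu\}\sim C\mu^n$) reduces the problem, up to a perturbative $y$-dependent error in $V$, to the family of one-dimensional operators $L_k = -\p_\rho^2 + V_0(\rho) + e^{-2\rho}\mu_k^2$ on the half-line $(\rho_1,\infty)$.

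The inductive step rests on the algebraic identity $\rho^2\mcg_j(\rho) = \mcg_{j-1}(s)$ (with the convention $\mcg_0(\rho) = \rho^{-2}$). Under the Liouville change of variable, the eigenvalue equation $(-\p_\rho^2 + V_0)u = -Eu$ becomes $(-\p_s^2 + \rho^2 V_0 + \tfrac14)w = -E e^{2s}w$, and the $+\tfrac14$ from the Liouville Schwarzian exactly cancels the $-\tfrac14\rho^{-2}$ term in $V_0$ after multiplying through by $\rho^2$. The new potential $\tilde V_0(s)$ then has the same structural form as $V_0$ but with $N$ replaced by $N-1$. Iterating $N$ times produces the reduced problem
\[
\bigl(-\p_t^2 - c_N/t^2 + \mbox{(lower order)}\bigr) w = -E\,W(t)\,w,\qquad t = \log_{(N)}\rho,
\]
against a super-exponentially growing weight $W(t)$. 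The dual of this chain of operator identities is the iterated Hardy inequality $\int|u'|^2\,d\rho \ge \sum_{j=0}^{N-1}\tfrac14\int|u|^2\mcg_j\,d\rho + \tfrac14\int|u|^2\mcg_N\,d\rho$ on $H^1_0(\rho_1,\infty)$, which together with the form of $V_0$ gives the dichotomy: if $c_N<1/4$ then on the end $L_0 \ge (1/4 - c_N)\mcg_N > 0$ modulo a compact perturbation, so $N_0(H)<\infty$; if $c_N>1/4$ then Sturm oscillation applied to the reduced problem produces infinitely many $L^2$-decaying solutions, so $N_0(H)=\infty$.

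For the quantitative asymptotic in the infinite case, I would use the semiclassical Weyl count on the exponentially-growing-volume hyperbolic end,
\[
N_E(H) \sim C\int_{\rho_1}^{\rho^*(E)} e^{n\rho}\bigl(|V_0(\rho)|-E\bigr)_+^{(n+1)/2}\,d\rho,
\]
where $\rho^*(E)$ satisfies $|V_0(\rho^*)| = E$. Since $V_0\sim -1/(4\rho^2)$ at leading order and the iterated log corrections are subleading for this purpose, $\rho^*(E)\sim (2\sqrt E)^{-1}$, and Laplace's method at the upper endpoint of the integral gives $\log N_E(H) \sim n\rho^*(E)$. Taking $N+2$ iterated logarithms absorbs all multiplicative constants and sub-leading terms into $O(1)$ and yields the claimed $\log_{(N+2)}N_E(H) = \log_{(N+1)}(E^{-1}) + O(1)$. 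Matching upper and lower bounds at this level of precision are obtained by trial functions localized in radial shells and angular sectors (lower bound, following \cite{KS}) and by Birman--Schwinger estimates in the transformed coordinates (upper bound), exactly as in the proofs of Theorems \ref{main1} and \ref{main2}.

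The hardest part is propagating the error term $O(\mcg_N(\log_{(N)}\rho)^{-\eps})$ in \eqref{def-GN} through the $N$ successive Liouville transformations: each step generates further Schwarzian-type corrections which, together with the original error, must remain uniformly subdominant to the principal $c_N\mcg_N$ term at every level. Careful estimates on compositions of iterated logarithms $\log_{(j)}(\log_{(k)}\rho)$ form the technical core of the extension of Theorems \ref{main1} and \ref{main2} to arbitrary $N$.
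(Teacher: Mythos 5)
Your architecture largely coincides with the paper's: bracketing to a collar, removal of the first-order radial term, separation into eigenmodes of $\Delta_{h(0)}$ with the boundary Weyl law, the dichotomy at $c_N=\oq$ (positivity/Hardy when $c_N<\oq$, oscillation when $c_N>\oq$), and the iterated substitutions $s=\log\rho$, $u=\rho^{1/2}w$ keyed to the identity $\rho^{2}\mcg_{(j)}(\rho)=\mcg_{(j-1)}(\log\rho)$ are exactly the chain of changes of variables the paper performs. The genuine gap is in the quantitative step. The asserted asymptotic $N_E(H)\sim C\int e^{n\rho}\bigl(|V_0(\rho)|-E\bigr)_+^{(n+1)/2}d\rho$ is precisely the statement that has to be proved: it is not an available Weyl-type theorem in this regime (eigenvalues accumulating at the bottom of the essential spectrum, critical $\rho^{-2}$ coupling with iterated-log corrections), and the paper never establishes an asymptotic equivalence at all — only an upper and a lower bound, with different constants and even different powers of $E^{-1}$ inside the exponential, which merge only after taking $N+2$ logarithms. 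The paper's actual mechanism is: per angular mode the number of zeros of the transformed solution is controlled by the Pr\"ufer angle (Proposition \ref{lmzero}, with $|\theta'|\le 3$ on the classically allowed set) and contributes only $O(\log_{(N+1)}E^{-1})$, while the dominant factor is the number of admissible boundary eigenvalues $\zeta_j\lesssim E e^{2\vrho_{u}}$, counted via \eqref{weyl}. Your replacement of this by ``Birman--Schwinger estimates in the transformed coordinates'' is not shown to produce, and does not obviously produce, the required $\exp\bigl(cE^{-1/2}\bigr)$-type two-sided bounds; it is exactly here that your sketch stops being a proof.

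Two further concrete points. For the lower bound, trial functions ``localized in radial shells'' fail at critical coupling: a bump supported on a shell of width comparable to $\rho$ has radial kinetic energy $C\rho^{-2}$ with $C$ far exceeding $\oq$, so its Rayleigh quotient is positive; negative energy is produced only by test data carrying the weight $\rho^{1/2}(\log\rho)^{1/2}\cdots$ and spread over intervals $[\vrho_1,\vrho_2]$ that are long at the $N$-th logarithmic scale, $\log_{(N)}\vrho_2\ge M\log_{(N)}\vrho_1$ — this is what the paper's oscillation argument (at least one increment of $\pi$ of the Pr\"ufer angle on such an interval, for every mode with $\zeta_j\le\mu_{L}$) encodes, and it is also why the paper's lower bound is weaker than its upper bound before the iterated logs are taken. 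Second, after one Liouville step the spectral parameter becomes the weight $Ee^{2s}$ (and the mode term becomes $\zeta_j e^{2s}e^{-2e^{s}}$), so the reduced problem is not literally of the form covered by Theorems \ref{main1} and \ref{main2}; the induction cannot invoke them as a black box, and the counting has to be redone for the weighted equations — the paper sidesteps this by counting zeros, a quantity invariant under all these substitutions. Your dichotomy argument is sound and parallels the paper; it is the matching upper and lower counting bounds that remain unproved in the proposal.
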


Notice that one cannot hope to take $N=\infty$ in the definition of $V_0(\rho)$ because the denominators will be equal to zero at points of the form $\rho= e^{e^{e^{\ldots}}}.$ As in the case of Theorems \ref{main2} and \ref{main1}, our proofs actually give better upper and lower bounds for $N_E(H)$ and this formulation is used to unify these bounds.

As we have already mentioned, the metric $g$ induces a conformal structure at $\p X$ and this is reflected in the choice of the boundary defining function $x.$   Since $N_E(H)$ does not depend on this choice, its asymptotic behavior  in principle could  reveal some  invariants of the conformal structure of the metric induced by $g$ at $\p X.$  However,  our methods are not refined enough to detect that.  This dependence will not affect the main term of the asymptotic behavior of $N_E(H)$ and the contributions of the boundary structure will be hidden among  the  terms  of the $O(1)$ part of the estimates above and these are very hard to tract.

\subsection{The Strategy of the Proofs }   The methods used in the proof of Theorems \ref{main2}, \ref{main1} and \ref{doubleT} are  Dirichlet-Neumann bracketing and the Sturm oscillation theorem,  which are standard for this type problems. 

 For $x$ as in \eqref{prod} and $ x_0\in [0,\mathfrak{e}),$  let
\beq\label{neigh}
X_\infty = \{ z \in X : x(z) \leq x_0\}, \ \ X_0 = \{ z\in  X : x(z) \geq x_0\}.
\eeq
So $(X_0,g)$ and $(X_\infty, x^2g)$ are $C^\infty$ compact Riemannian manifolds with boundary. We will define $\Upsilon^\bullet_{0},$ and $\Upsilon^\bullet_{\infty},$ to  be the restrictions of the operator $H$ to $X_0$  and $X_\infty$ with Dirichlet ($\bullet=D$) and Neumann ($\bullet=N$) boundary conditions.   Since $(X_0,g)$ is a compact $C^\infty$ Riemannian manifold with boundary, it is well known, see for example \cite{RS4}, that 
\beq\label{spec-X0}
\begin{split}
& \sigma(\Upsilon_0^D)= \{ \wt \la_1 < \wt \la_2 \leq \wt  \la_3  \ldots \},  \;\ \wt \la_j\in \mr,  \;\ \wt \la_j\rightarrow \infty, \\
& \sigma(\Upsilon_0^N)= \{ \wt \mu_1 < \wt \mu_2 \leq \wt \mu_3  \ldots \},  \;\ \wt \mu_j\in \mr, \;\  \wt \mu_j\rightarrow \infty.
\end{split}
\eeq

We will show that $ \sigma_{ess}(\Upsilon^\bullet_\infty)=[0,\infty),$  $\bullet=D,N,$  with no embedded eigenvalues, and their point spectra satisfy
\beq\label{spec-XI}
\begin{split}
& \sigma_{pp}(\Upsilon^D_\infty)= \{ \la_1 < \la_2 \leq  \la_3 \ldots \}, \;\  \la_j<0, \text{ is finite or }  \la_j\rightarrow 0,\\
& \sigma_{pp}(\Upsilon^N_\infty)=  \{ \mu_1 <  \mu_2 \leq  \mu_3 \ldots \}, \;\   \mu_j<0, \text{ is finite or }   \mu_j\rightarrow 0.
\end{split}
\eeq

 Following Chapter XIII of \cite{RS4}, we will show that
\[
\Upsilon_0^N\oplus \Upsilon_{\infty}^N \leq H \leq \Upsilon_{0}^D \oplus \Upsilon_{\infty}^D,
\]
 and it follows that for any $E <0,$ 
\begin{equation}
N_E(\Upsilon_{0}^N) + N_E(\Upsilon_{\infty}^N)\leq  N_E(H) \leq N_E(\Upsilon_{0}^D) + N_E(\Upsilon_{\infty}^D),\label{eqest}
\end{equation}
where $N_E(T)$ is the counting function defined in \eqref{counting} for the operators $T=\Upsilon_0^\bullet$ and $\Upsilon_\infty^\bullet$ instead of $H.$
But  it follows from \eqref{spec-X0} that there exists $N^{\#}>0$ such that  for any $E<0,$  $N_E(\Upsilon_{0}^N) <N^{\#}$ and $ N_E(\Upsilon_{0}^D)<N^{\#}.$ We will show that  if $V$ satisfies the hypotheses of either one of the Theorems \ref{main2}, \ref{main1} and \ref{doubleT},  then for $E<0$, both $N_E(\Upsilon_\infty^\bullet),$ $\bullet=N,D,$  have either finitely or infinitely many eigenvalues.  In case both have infinitely many eigenvalues,  the corresponding counting function of their eigenvalues  have the same asymptotic behavior as $E\searrow 0,$ and therefore it gives the asymptotic behavior of $N_E(H).$


\section{The Spectrum of $H$ }

  For the lack of  suitable references, we  will briefly discuss some properties of the spectrum of $H.$  First we recall some results about of the spectrum of $H_0$ from from \cite{MM,Ma2}.  Let  $x$ be a boundary defining function for which \eqref{prod} holds in a collar neighborhood of $\p X.$  In these coordinates, the Laplacian $\Delta_g$ is given by \eqref{deflap}, so  $\Delta_g$ is a zero differential operator in the sense of  \cite{MM}, and  we define the zero-Sobolev spaces of order k as in \cite{MM}:  Let  $\mcv(\p X)$ denote the Lie algebra of $C^\infty$ vector fields on $X$ which are equal to zero at $\p X.$  In coordinates $(x,y)$  these vector fields are spanned   by $\{x\p_x, x\p_{y_j}, \; 1\leq j \leq n\}$ over the $C^\infty$ functions.  Let
\beq\label{0-sob}
\mch_0^k(X)=\{ u \in L^2(X):   \;\ W_1 W_2\ldots W_m u \in L^2(X), \;\   W_j \in \mcv(\p X), \;\ m \leq k\}.
\eeq
 We know from  \cite{MM} that $\Delta_g$ with domain $\mch_0^2(X)\subset L^2(X)$ is a self-adjoint operator,   we also know from \cite{bouclet,MM,Ma1} that  its spectrum  consists of an absolutely continuous part $\sigma_{ac}(\Delta_g)=[\fnf, \infty)$ and finitely many eigenvalues in the point spectrum $\sigma_{pp}(\Delta_g)\subset (0, \fnf).$   There are no eigenvalues in $[0,\infty),$ \cite{bouclet,Ma2}. As above, we set $H_0= \Delta_g-\nsq,$ and so  the resolvent
\begin{gather*}
\rhz(\la)=(H_0-\la)^{-1}: L^2(X) \longmapsto \mch_0^2(X), \\
\text{ provided } \la \in \mbc \setminus ([0,\infty) \cup \{ \la_1, \la_2,\ldots, \la_N\}),
\end{gather*}
where $\la_j \in (-\nsq, 0)$ is an eigenvalue of finite multiplicity of $H_0 .$ By definition, the resolvent set of $H_0$ is
 \beq\label{res-set}
 \rho(H_0)=   \ \mbc \setminus ([0,\infty) \cup \{\la_1, \ldots \la_N\}), \;\  \la_j \text{ is an eigenvalue of } H_0.
  \eeq
  
  To analyze the spectrum of $H,$ we begin by observing that for $\la \in \rho(H_0),$
  \beq\label{res-id}
(\Delta_g+V-\nsq-\la) \rhz(\la)= \id+ V \rhz(\la), 
\eeq

Since
\[
\rhz(\la): L^2(X) \longrightarrow \mch_0^2(X) \text{ is a bounded operator for } \la \in \rho(H_0),
\]
then  if $\chi_j \in C_0^\infty(\intx),$ $\chi_j(z)=1$ in the region $x(z)>\frac{1}{j}$ and $\chi_j(z)=0$ if $x(z)<\frac{1}{j+1},$   it follows that
\[
\chi_j(z) V(z) \rhz(\la): L^2(X) \longrightarrow H_c^2(\intx){\hookrightarrow} L_c^2(\intx),
\]
where the subindex $c$ indicates compact support. Notice that since supports are compact we can use either $\mch_0^2(X)$ or the standard Sobolev space $H^2(X).$ It  follows from Rellich's embedding Theorem that for fixed $j,$
\[
\chi_j(z) V(z) \rhz(\la): L^2(X) \longrightarrow L_c^2(\intx)
\]
is a compact operator.  Since  $V\in L^\infty(X)$ and $V(z)\rightarrow 0$ as $z\rightarrow \p X,$ it follows that 
\begin{gather*}
||\chi_j(z) V(z) \rhz(\la)- V(z) \rhz(\la)||_{\mcl(L^2(X))}\rightarrow 0 \text{ as } j\rightarrow \infty
\end{gather*}
in the operator norm, and so we conclude that $V \rhz(\la): L^2(X) \longrightarrow L^2(X)$ is a compact operator, provided $\la \in \rho(H_0).$  But we also know that  for $\im\la<<0,$ the operator norm of $\rhz(\la)$ is less than or equal to 
 $\frac{1}{\im(\la)},$ see for example Theorem VI.8 of \cite{RS1}, and therefore $(\id+ V \rhz(\la))^{-1}$ is bounded for $\im \la<<0,$ and $|\re \la|>1.$   Then Fredholm Theorem, see for example Theorem VI.14 of \cite{RS1}, guarantees that with the exception of a countable set of points, which are poles of $\rh(\la),$
\begin{gather*}
\rh(\la)=\rhz(\la)(\id+ V \rhz(\la))^{-1} \text{ for } \la \in \rho(H_0).
\end{gather*}
Moreover the poles of  $\rh(\la)$  in $\rho(H_0)$  consist of a countable set $\{\mu_j, \;\ j \in \mn\}\subset (-\infty,0)$ such that $\mu_j$ are eigenvalues of $H$ with finite multiplicity. This set is either finite, or infinite.  If there are infinitely may eigenvalues, they accumulate  only at zero. 

Finally, since $V R_{H_0}(\la)$ is compact for  $\la\in \rho(H_0),$ it follows that the operator 
 \begin{gather*}
 V: L^2(X) \longmapsto L^2(X)\\
 f \longmapsto V(z) f
 \end{gather*}
 is relatively compact with respect to $H_0$ and it follows from Weyl's Theorem, see Theorem 14.6 of \cite{HS} that $\sigma_{ess}(H)=\sigma(H)\setminus \sigma_{pp}(H)=\sigma_{ess}(H_0)=[0,\infty).$  Therefore we have
\begin{theorem}\label{eigenvT}   Let $(\intx,g)$ be an AHM, let  $V\in L^\infty(X)$ be real valued and suppose that $V(z)\rightarrow 0$ as $z\rightarrow \p X.$ Then
$\sigma_{ess}(H)=\sigma(H)\setminus \sigma_{pp}(H)=[0,\infty).$ There are no embedded eigenvalues. Moreover, the resolvent 
\begin{gather*}
\rh(\la)=(H-\la)^{-1}:L^2(X) \longrightarrow \mch_0^2(X) \text{ is bounded for } \la \in \mbc\setminus( [0,\infty)\cup  \mcd),
\end{gather*}
where $\mcd= \{\mu_1,  \mu_2, \ldots\}\subset (-\infty,0),$ with $\mu_{j+1}\geq \mu_j,$ is a bounded discrete set   (in $(-\infty,0)$) which consists of  eigenvalues of $H$ of finite multiplicity.  
\end{theorem}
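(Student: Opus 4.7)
The plan is to verify each claim of Theorem \ref{eigenvT} by formalizing the sketch already laid out in the preceding paragraphs. First I would establish that $V$ is relatively compact with respect to $H_0$: for $\la\in\rho(H_0)$, $R_{H_0}(\la):L^2(X)\to\mch_0^2(X)$ is bounded; multiplying by a cutoff $\chi_j\in C_0^\infty(\intx)$ with $\chi_j=1$ on $\{x\geq 1/j\}$ makes $\chi_j V R_{H_0}(\la)$ factor through a compactly supported Sobolev space, where Rellich's embedding yields compactness on $L^2(X)$. Since $V\in L^\infty(X)$ with $V\to 0$ at $\p X$, one has $\|(\chi_j V-V)R_{H_0}(\la)\|_{\mcl(L^2)}\to 0$, and $VR_{H_0}(\la)$ is a norm limit of compact operators, hence compact. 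Weyl's theorem on the stability of the essential spectrum under relatively compact perturbations (Theorem 14.6 of \cite{HS}) then gives $\sigma_{ess}(H)=\sigma_{ess}(H_0)=[0,\infty)$.

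Second, the meromorphic structure of $R_H(\la)$. For $\im\la\ll 0$, the bound $\|R_{H_0}(\la)\|_{\mcl(L^2)}\leq 1/|\im\la|$ makes $\id+VR_{H_0}(\la)$ invertible by Neumann series, so the resolvent identity \eqref{res-id} yields
\[
R_H(\la)=R_{H_0}(\la)\bigl(\id+VR_{H_0}(\la)\bigr)^{-1}.
\]
The analytic Fredholm theorem (Theorem VI.14 of \cite{RS1}) promotes this to a meromorphic family on $\rho(H_0)$ whose poles form a discrete subset $\mcd\subset\rho(H_0)$; the residue at each pole is a finite-rank operator whose range is the corresponding eigenspace, so each pole is an eigenvalue of $H$ of finite multiplicity. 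The lower bound $H\geq H_0-\|V\|_\infty\geq -\tfrac{n^2}{4}-\|V\|_\infty$ forces $\mcd$ to be bounded below, while the identification $\sigma_{ess}(H)=[0,\infty)$ forces any accumulation point of $\mcd$ to lie in $[0,\infty)\cup\{\la_1,\dots,\la_N\}$; a local Fredholm argument near each $\la_j$ (which is an isolated eigenvalue of $H_0$ of finite multiplicity) then confines accumulation to $\{0\}$, so $\mcd\subset(-\infty,0)$ is discrete with only $0$ as a possible accumulation point.

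Third, the absence of embedded eigenvalues. This is the main obstacle because the stability theorems above control only the essential spectrum, and an eigenvalue of $H$ could \emph{a priori} sit inside $[0,\infty)$. For $E>0$ I would argue that any $L^2$ eigenfunction $u$ of $H$ with $Hu=Eu$ satisfies $(H_0-E)u=-Vu$, and then exploit the fact that $H_0$ has no embedded eigenvalues (proved in \cite{bouclet,Ma2}) together with a boundary regularity/decay analysis: the Mazzeo--Melrose meromorphic continuation of $R_{H_0}(\la)$ across $[0,\infty)$ identifies the possible asymptotic behavior of $u$ at $\p X$ in terms of the indicial roots of $H_0$ at $E$, and the decay $V\to 0$ at $\p X$ forces $u$ to be of the rapidly decaying type, at which point a unique continuation/Carleman argument in a collar neighborhood of $\p X$ forces $u\equiv 0$. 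At the threshold $E=0$ the indicial roots are $0$ and $n$, neither of which produces a non-trivial $L^2$ element, so the threshold is also free of eigenvalues.

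The routine parts of this argument are essentially already written into the paragraphs preceding the theorem; the real work is the no-embedded-eigenvalues claim, which requires combining the zero-calculus structure of $H_0$ with the smallness of $V$ at infinity, and this is the step I expect to require the most care.
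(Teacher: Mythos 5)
Your proposal follows the paper's own argument almost verbatim: relative compactness of $V R_{H_0}(\la)$ via the cutoffs $\chi_j$, Rellich's theorem and the decay of $V$ at $\p X$, Weyl's theorem to get $\sigma_{ess}(H)=[0,\infty)$, and the analytic Fredholm theorem applied to $R_{H_0}(\la)(\id+VR_{H_0}(\la))^{-1}$ to obtain the discrete set of negative eigenvalues of finite multiplicity accumulating only at $0$. For the absence of embedded eigenvalues the paper likewise gives no self-contained proof but cites Mazzeo \cite{Ma2} (see also \cite{Bor,bouclet}), whose exponential-decay-plus-Carleman strategy near $\p X$ is exactly the one you sketch, so your proposal is correct and coincides with the paper's approach.
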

 The fact that there are no embedded eigenvalues is due to Mazzeo \cite{Ma2}, see also \cite{Bor,bouclet}. 
We also have the following 
\begin{theorem}\label{specDN}  Let $(\intx,g)$ be an AHM, let $x$ be a boundary defining function such that \eqref{prod} holds.  Let $X_\infty$ be as in \eqref{neigh} and let $\Upsilon_\infty^\bullet,$ $\bullet=N,D$ denote the operator $H$ with Dirichlet or Neumann boundary conditions in $X_\infty.$ If $V(z)\rightarrow 0$ as $z\rightarrow \p X,$  according to either \eqref{hyp-pot}, \eqref{hyp-potTC} or \eqref{def-GN},  then  $\sigma_{ess}(\Upsilon_\infty^\bullet)=[0,\infty).$  Moreover, there are no embedded eigenvalues.
\end{theorem}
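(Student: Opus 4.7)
The plan is to follow the template of the proof of Theorem~\ref{eigenvT}, with $H_0$ replaced by its Dirichlet or Neumann realization on $X_\infty.$ Let $\Upsilon_{\infty,0}^\bullet,$ $\bullet=D,N,$ be this realization, defined as the self-adjoint operator associated to the natural closed quadratic form with the prescribed boundary condition on the smooth compact face $\{x=x_0\}.$ The argument has three parts: (i) show $\sigma_{\rm ess}(\Upsilon_{\infty,0}^\bullet)=[0,\infty);$ (ii) show that multiplication by $V$ is relatively compact with respect to $\Upsilon_{\infty,0}^\bullet;$ (iii) rule out embedded eigenvalues of $\Upsilon_\infty^\bullet.$ Together, (i) and (ii) with Weyl's theorem will give $\sigma_{\rm ess}(\Upsilon_\infty^\bullet)=[0,\infty).$

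For (i), the lower bound $[0,\infty)\subset \sigma_{\rm ess}(\Upsilon_{\infty,0}^\bullet)$ comes from Weyl sequences built from the explicit hyperbolic model solutions $x^{\frac{n}{2}+i\xi}F(y),$ $\xi^2=\la,$ with $F$ a Laplace eigenfunction on $(\p X,h_0),$ truncated by cutoffs supported in $\{x<\eps\},$ $\eps\to 0;$ these lie in the Dirichlet or Neumann form domain because their supports miss $\{x=x_0\}.$ The reverse inclusion, that the spectrum in $(-\infty,0)$ is discrete, I would get from the Mazzeo--Melrose $0$-calculus \cite{MM}: viewing $X_\infty$ as a manifold with corners carrying the $0$-structure at $\{x=0\}$ and the standard elliptic structure at $\{x=x_0\},$ the parametrix construction for $H_0-\la$ with Dirichlet or Neumann conditions at $\{x=x_0\}$ is Fredholm modulo compacts, so meromorphic Fredholm theory yields a discrete set of poles in $\mbc\setminus[0,\infty).$

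For (ii), I repeat verbatim the cutoff argument from the proof of Theorem~\ref{eigenvT}: pick $\chi_j\in C^\infty(X_\infty)$ equal to $1$ for $x\geq 1/j$ and vanishing for $x\leq 1/(j+1).$ The operator $\chi_j V(\Upsilon_{\infty,0}^\bullet-\la)^{-1}$ has range in $H^2$-functions supported in the compact region $\{1/(j+1)\leq x\leq x_0\}$ (where the zero-Sobolev regularity of the resolvent coincides with the ordinary Sobolev one), so Rellich embedding gives its compactness on $L^2(X_\infty).$ Under any of \eqref{hyp-pot}, \eqref{hyp-potTC}, \eqref{def-GN} we have $\|(1-\chi_j)V\|_{L^\infty}\to 0,$ so $V(\Upsilon_{\infty,0}^\bullet-\la)^{-1}$ is a norm limit of compact operators and hence compact. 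Weyl's theorem (Theorem~14.6 of \cite{HS}) then yields $\sigma_{\rm ess}(\Upsilon_\infty^\bullet)=[0,\infty).$

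The principal obstacle is (iii), for which I would adapt Mazzeo's argument from \cite{Ma2} that precludes embedded eigenvalues of $H$ on all of $X.$ That proof is a Carleman/weighted integration-by-parts estimate that is local near the asymptotic boundary and forces any $L^2$ solution of $(H-\la)u=0,$ $\la\geq 0,$ to vanish in a collar neighborhood of $\p X.$ Since this collar lies inside $X_\infty$ and the estimate never sees the finite face $\{x=x_0\},$ it applies unchanged to $\Upsilon_\infty^\bullet u=\la u;$ standard interior unique continuation (Aronszajn--Cordes) on the relatively compact region $\{\eps<x\leq x_0\}\subset X_\infty$ then propagates the vanishing up to $\{x=x_0\},$ where the Dirichlet or Neumann condition forces $u\equiv 0$ on all of $X_\infty.$
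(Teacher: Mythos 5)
Your proposal is correct in outline, but on the essential-spectrum part it takes a genuinely different (and heavier) route than the paper. The paper obtains $\sigma_{ess}(\Upsilon_\infty^\bullet)=[0,\infty)$ in one stroke from Theorem \ref{eigenvT} together with the decomposition principle (Proposition 2.1 of \cite{Don}, see also Theorem 9.43 of \cite{Bor}): the essential spectrum is insensitive to the compact piece $X_0$ and to the boundary condition at $\{x=x_0\}$, so it coincides with $\sigma_{ess}(H)$, which is already known. You instead rebuild both inclusions from scratch: quasimodes of the form $x^{n/2+i\xi}F(y)$ with cutoffs near $x=0$ for the inclusion $[0,\infty)\subset\sigma_{ess}$, a parametrix in the $0$-calculus of \cite{MM} on the corner manifold for discreteness of the spectrum below $0$, and then a repetition of the relative-compactness argument for $V$ plus Weyl's theorem. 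This can be made to work, and your part (ii) and the quasimode construction are routine, but the parametrix step is the heaviest and least substantiated piece of the plan: gluing the Mazzeo--Melrose construction at $x=0$ to an elliptic boundary parametrix at the finite face $\{x=x_0\}$ (in particular for the Neumann realization, which is defined only through its quadratic form) is a nontrivial body of work that you assert rather than carry out, and it buys nothing beyond what the one-line decomposition-principle citation already gives.

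On embedded eigenvalues your approach is essentially the paper's: the Carleman/exponential-decay argument is local near $\p X$, hence blind to the finite face, and interior unique continuation finishes the job (note that once $u$ vanishes on an open subset of the connected interior, unique continuation alone gives $u\equiv 0$; the Dirichlet or Neumann condition at $x=x_0$ plays no role). The one genuine overstatement is attributing the entire range $\la\geq 0$ to Mazzeo's argument in \cite{Ma2}: that argument excludes eigenvalues strictly inside the continuous spectrum, where $L^2$ eigenfunctions decay exponentially; at the bottom point $\la=0$ there is no exponential decay to exploit, and absence of an eigenvalue there is a separate, more delicate result due to Bouclet \cite{bouclet} --- which is precisely why the paper cites \cite{bouclet} alongside \cite{Ma2}, and why its own treatment of the threshold case $E=0$ for the separated operators requires the Hardy-type argument of Lemma \ref{HE}. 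You should invoke \cite{bouclet} for $\la=0$ rather than folding that case into the Carleman step.
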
 

The fact that $\sigma_{ess}(\Upsilon_\infty^\bullet)=[0,\infty)$ is a consequence of Theorem \ref{eigenvT} and Proposition 2.1 of \cite{Don}, see also Theorem 9.43  of \cite{Bor}.  The proofs of these results are actually  for the Dirichlet boundary conditions, but they work for Neumann conditions as well.   The results of \cite{Bor,bouclet,Ma2} also guarantee that there are no embedded eigenvalues in these cases. The point is to show that if there were eigenfunctions in $L^2,$ they would decay exponentially and a Carleman estimate shows that they are actually equal to zero. The argument takes place in a neighborhood of $\p X$ and thus also works for $\Upsilon_\infty^\bullet.$  


\section{Dirichlet-Neumann Bracketing}\label{DNB}

One can view the operator $H$ as the unique self-adjoint operator on $L^2(X)$ whose quadratic form is the closure of
\beq\label{qformX}
\begin{split}
Q(\vphi,\psi)= \langle \nabla_{g} \phi, \nabla_{g} \psi\rangle_{{}_{L^2_g(X)}} +&  \langle (V-\nsq) \phi, \psi\rangle_{{}_{L^2_g(X)}} = \int_{X} g^{ij}\p_i\phi \p_j\bar\psi d\vol_g + \int_{X} (V-\nsq)\phi\bar \psi d\vol_g, \\
& \text{ with }  \phi,\psi  \in C_0^\infty(\intx).
\end{split}
\eeq
The domain of the quadratic form $Q$ is $\mch_0^1(X)\times \mch_0^1(X),$ defined in \eqref{0-sob}.

Let $x$ be a boundary defining function such that \eqref{prod} holds and let $X_0$ and $X_\infty$ be as defined in \eqref{neigh}.   We consider the quadratic forms to be the closure of 
\beq\label{qform}
\begin{split}
 Q^D(X_0)(\vphi,\psi)= Q(\vphi,\psi) &  \text{ with } \vphi, \psi \in C_0^\infty(\intx_0), \\
 Q^N(X_0)(\vphi,\psi)=  Q(\vphi,\psi) &  \text{ with } \vphi, \psi \in C^\infty(X_0), \;\ \p_x \vphi|_{\{x=x_0\}}=\p_x \psi|_{\{x=x_0\}}=0.
\end{split}
\eeq
It turns out that the domains of  these quadratic forms are
 \[
 \begin{split}
&  \mcd(Q^D(X_0))= H_0^1(X_0) \times H_0^1(X_0), \;\
 H_0^1(X_0)=   \overline{C_0^\infty(\intx_0)} \text{ with the }  H_{\loc}^1(\intx) \text{ norm},  \\
 &  \mcd(Q^N(X_0))= \bar H^1(X_0) \times \bar H^1(X_0), \;\ \bar H^1(X_0)= \{\vphi  \in L^2(X_0): \exists \ f \in H_{\loc}^1(\intx), \  f= \vphi \text{ in } X_0\}
\end{split}
\]

The self-adjoint operator operators corresponding to $Q^D(X_0)$ and $Q^N(X_0)$ are defined to be the operator $H$ respectively with Dirichlet  and Neumann boundary conditions, which we denote by $\Upsilon_0^D$ and $\Upsilon_0^N$ respectively.

Similarly, we define the  quadratic forms
\beqq\label{qform1}
\begin{split}
& Q^D(X_\infty)= Q(\vphi,\psi), \;\  \vphi,\psi \in C_0^\infty(\intx_\infty), \\
& Q^N(X_\infty)= Q(\vphi,\psi), \;\  \vphi,\psi \in C^\infty(X_\infty)\cap \mch_0^2(X_\infty) \;\ \p_x \vphi|_{\{x=x_0\}}=\p_x \psi|_{\{x=x_0\}}=0.
\end{split}
\eeqq
The domains  of their closures are 
\[
\begin{split}
& \mcd(Q^D(X_\infty))= \mcw_0^1(X_\infty) \times \mcw_0^1(X_\infty), \  \mcw_0^1(X_\infty)= \overline{ C_0^\infty(\intx_\infty) }\text{ with the }  \mch_0^1(X) \text{ norm }\\
& \mcd(Q^N(X_\infty))= \bar\mch_0^1(X_\infty) \times \bar\mch_0^1(X_\infty), \  \bar\mch_0^1(X_\infty)= \{\vphi  \in L^2(X_\infty): \exists  f \in \mch_0^1(X);  f= \vphi \text{ in } X_\infty\}.
\end{split}
\]
The corresponding self-adjoint operators are defined to be $\Upsilon_\infty^D$ and $\Upsilon_\infty^N,$ which are the operator $H$ with Dirichlet or Neumann boundray conditions on $X_\infty.$

We follow section XIII.15 of \cite{RS4} and define the direct sum of self-adjoint operators: If $A_j,$ $j=1,2,$ are self adjoint operators acting on Hilbert spaces $\mcl_j,$ $j=1,2,$ with domains $\mcd(A_j),$ $j=1,2,$ let 
\[
\mcl=\mcl_1\oplus \mcl_2 \text{ and } A_1\oplus A_2(\phi_1,\phi_2)= (A_1 \phi_1, A_2 \phi_2), \phi_j \in \mcd(A_j), \;\ j=1,2.
\]
It is proved in section XII.15 of \cite{RS4} that
\begin{enumerate}[1.]
\item $A_1\oplus A_2$ is self adjoint.
\item The associated quadratic forms satisfy $Q(A_1\oplus A_2)=Q(A_1) \oplus Q(A_2).$
\item If $N(\la,A)= \operatorname{dim} P_{(-\infty, \la)}(A),$ then
\beq\label{P1}
N(\la, A_1\oplus A_2)= N(\la,A_1) + N(\la, A_2).
\eeq
\end{enumerate}

In our case, we have four natural operators $\Upsilon^\bullet(X_0)$ and $\Upsilon^\bullet(X_\infty),$ $\bullet=D,N.$  Notice that
\[
\begin{split}
&  H_0^1(X_0) \oplus \mcw_0^1(X_\infty)  \subset \mch_0^1(X), \text{ and that for  } \;\ \vphi \in C_0^\infty(\intx_0), \;\ \psi \in C_0^\infty(\intx_\infty),  \\
 & Q^D(X_0)(\vphi,\vphi)+   Q^D(X_\infty)(\psi,\psi)  = Q(H)(\vphi,\vphi)+ Q(H)(\psi,\psi).
\end{split}
\]

On the other hand, if $\vphi\in \mch_0^1(X),$ 
\[
\vphi|_{X_0} \in \bar \mch_0^1(X_0) \text{ and } \vphi|_{X_\infty} \in \bar \mch_0^1(X_\infty),
\]
this means that
\[
\begin{split}
& \mch_0^1(X_0)  \subset \bar \mch_0^1(X) \oplus \bar \mch_0^1(X_\infty), \text{ and for } \;\ \vphi \in \mch_0^1(X), \\
 & Q^N(X_0)(\vphi,\vphi) +  Q^N(X_\infty)(\vphi,\vphi)  = Q(H)(\vphi,\vphi).
\end{split}
\]
 If $A$ and $B$ are self adjoint operators defined on a Hilbert space $\mcl$ and $Q(A)$ and $Q(B)$ are their corresponding quadratic forms with domains $\mcd(Q(A))$ and  $\mcd(Q(B))$
 \[
 Q(A)(\vphi,\vphi) \geq  M ||\phi||, \;\ \phi \in \mcd(Q(A)) \text{ and }  Q(B)(\psi,\psi)\geq  M ||\psi||, \;\ \psi \in \mcd(Q(B)),
 \]
 we say that
 \[
 A \leq B \text{ if }  \mcd(Q(B)) \subset \mcd(Q(A)) \text{ and }  Q(A)(\vphi,\vphi) \leq Q(B)(\vphi,\vphi) , \vphi \in \mcd(Q(B)).
 \]

This translates into the following
\begin{prop}  Let $X_0,$ $X_\infty,$  $\mch^D(X_\bullet)$ and $\mch^N(X_\bullet),$ $\bullet=0,\infty,$ be defined as above, then
\[
\Upsilon_0^N\oplus \Upsilon_\infty^N \leq H \leq \Upsilon_0^D \oplus \Upsilon_\infty^D.
\]
\end{prop}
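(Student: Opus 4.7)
The plan is to invoke the quadratic-form characterization of the ordering of self-adjoint operators bounded below recalled from Section XIII.15 of \cite{RS4}: $A \leq B$ precisely when $\mcd(Q(B)) \subset \mcd(Q(A))$ and $Q(A)(\phi,\phi) \leq Q(B)(\phi,\phi)$ for every $\phi \in \mcd(Q(B))$. Both inequalities in the proposition will then be direct consequences of the two form-domain inclusions and the form identities already isolated in the paragraphs preceding the statement. The direct sum $A_1 \oplus A_2$ is understood as the self-adjoint operator on $L^2(X) = L^2(X_0) \oplus L^2(X_\infty)$ whose quadratic form is $Q(A_1) \oplus Q(A_2)$, as in property (2) listed above.

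For the upper bound $H \leq \Upsilon_0^D \oplus \Upsilon_\infty^D$, I would first take $\vphi_0 \in C_0^\infty(\intx_0)$ and $\vphi_\infty \in C_0^\infty(\intx_\infty)$ and identify the pair $(\vphi_0,\vphi_\infty)$ with the single function $\vphi_0+\vphi_\infty \in C_0^\infty(\intx)$, which lies in $\mch_0^1(X) = \mcd(Q(H))$. Since the supports are disjoint and $\{x=x_0\}$ has $g$-measure zero, the integrals defining $Q(H)$ split as
\[
Q(H)(\vphi_0+\vphi_\infty,\vphi_0+\vphi_\infty) = Q^D(X_0)(\vphi_0,\vphi_0) + Q^D(X_\infty)(\vphi_\infty,\vphi_\infty).
\]
By density, $C_0^\infty(\intx_0) \oplus C_0^\infty(\intx_\infty)$ is a form core for $\Upsilon_0^D \oplus \Upsilon_\infty^D$, so both sides extend continuously to $H_0^1(X_0) \oplus \mcw_0^1(X_\infty)$. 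This yields the inclusion $\mcd(Q(\Upsilon_0^D \oplus \Upsilon_\infty^D)) \subset \mcd(Q(H))$ together with \emph{equality} (not merely inequality) of the forms on the smaller domain, which is stronger than the required $\leq$.

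For the lower bound $\Upsilon_0^N \oplus \Upsilon_\infty^N \leq H$, I would use restriction instead of extension. Any $\vphi \in \mch_0^1(X)$ is tautologically an $H_{\loc}^1$-extension of $\vphi|_{X_0}$ and an $\mch_0^1(X)$-extension of $\vphi|_{X_\infty}$, so the restriction map sends $\mch_0^1(X)$ into $\bar H^1(X_0) \oplus \bar\mch_0^1(X_\infty) = \mcd(Q^N(X_0) \oplus Q^N(X_\infty))$. Again because $\{x=x_0\}$ has measure zero, the integrals defining $Q(H)$ split:
\[
Q(H)(\vphi,\vphi) = Q^N(X_0)(\vphi|_{X_0},\vphi|_{X_0}) + Q^N(X_\infty)(\vphi|_{X_\infty},\vphi|_{X_\infty}),
\]
which is exactly what is required for $\Upsilon_0^N \oplus \Upsilon_\infty^N \leq H$.

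There is no serious obstacle: the argument is entirely soft and the substantive content (definition of the four quadratic forms, identification of their domains, and the chain of inclusions $\mcd(Q^D) \subset \mcd(Q(H)) \subset \mcd(Q^N)$) has already been recorded above the statement. The only points worth double-checking are that $C_0^\infty(\intx_0) \oplus C_0^\infty(\intx_\infty)$ really is a form core for the Dirichlet direct sum, and that the restriction map $\mch_0^1(X) \to \bar H^1(X_0) \oplus \bar\mch_0^1(X_\infty)$ is well-defined; both are immediate from the explicit definitions of $H_0^1$, $\mcw_0^1$, $\bar H^1$ and $\bar \mch_0^1$ given in the text.
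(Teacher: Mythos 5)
Your proposal is correct and is essentially the paper's own argument: the paper likewise deduces the bracketing directly from the inclusion $H_0^1(X_0)\oplus\mcw_0^1(X_\infty)\subset\mch_0^1(X)$ with splitting of $Q(H)$ on compactly supported functions, the restriction map $\mch_0^1(X)\to \bar H^1(X_0)\oplus\bar\mch_0^1(X_\infty)$ with the corresponding Neumann form identity, and the Reed--Simon XIII.15 form characterization of the operator ordering and of direct sums. The extra care you take with the form core and the measure-zero interface $\{x=x_0\}$ is a harmless refinement of the same reasoning.
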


It follows from \eqref{P1} that if $E>0,$
\beq\label{eigen-est1}
N_E(\Upsilon_0^N) + N_E(\Upsilon_\infty^N) \leq N_E(H) \leq  N_E(\Upsilon_0^D) + N_E(\Upsilon_\infty^D).
\eeq

Since $\Upsilon_0^D$ and $\Upsilon_0^N$ are Schr\"odinger operators with $C^\infty$ potentials on compact manifolds with boundary, their spectra satisfy \eqref{spec-X0}.  We will show  that the point spectra of both $\Upsilon_\infty^D$ and $\Upsilon_\infty^N$ are either finite or infinite.  In the latter case, we will show that both have the same asymptotic behavior as $E$ goes to zero, and so this gives the asymptotic behavior of $N_E(H)$ as $E\rightarrow 0.$

\subsection{Model Operators on $X_\infty$}  Our analysis in this section  will be restricted to a small enough collar neighborhood of  $\p X=\{x=0\}$ where \eqref{prod} holds and so $\Delta_g$ is given by \eqref{lap1}. Since $h(x)$ is a $C^\infty$ one-parameter family of tensors on $\p X,$  we may write in local coordinates, 
\beq\label{coordh}
\begin{split}
& h(x)=\sum_{j,k=1}^n h_{jk}(x,y) dy_j d y_k, \;\ h_{jk}(x,y) \in C^\infty,  \text{ and } \\
& h_{jk}(x,y) = h_{jk}(0,y)+ x \wt h_{jk}(x,y), \;\ \wt h_{jk}(x,y) \in C^\infty.
\end{split}
\eeq
and so the corresponding quadratic forms for $H$ on $X_\infty$ with Dirichlet or Neumann boundary conditions defined in \eqref{qform1} may be written as
\beq\label{quadF1}
\begin{split}
 Q^\bullet(X_\infty)(\vphi,\vphi)= & \int_{0}^{x_0} \int_{\p X} \biggl( \bigl|x \p_x \vphi\bigr|^2 + \sum_{j,k=1}^n h^{jk}(x,y) (x \p_{y_j} \vphi) (x \p_{y_k} \bar \vphi) + (V-\nsq)|\vphi|^2 \biggr)\frac{\sqrt{|h(x)|}}{x^{n+1}} dy dx,  \\
& \vphi \in \mcw_0^1(X_\infty), \text{ if } \bullet=D, \;\  \vphi  \in \bar \mch_0^1(X_\infty), \text{ if } \bullet=N, \text{ and } (h^{jk})= (h_{jk})^{-1}.
\end{split}
\eeq

We want to show that the quadratic forms
 $Q^\bullet(X_\infty),$ $\bullet=D,N,$ can be bounded from above and below by quadratic forms associated with the product metric where $h(x)$ is replaced by $h(0)$ and  $V(x,y)$ is replaced by  potentials which depend on $x$ only.    If $|h(x)|=|\det h_{jk}(x,y)|$  is the volume element of the metric $h(x),$ and $\bigl(h^{jk}(x,y)\bigr)=\bigl(h_{jk}(x,y)\bigr)^{-1},$  is the inverse of the matrix $\bigl(h_{jk}(x,y)\bigr),$   we deduce from \eqref{coordh} that there exists $\ga>0$ and $x_0$ small such that for $ x\in(0,x_0),$  $(1-\ga x)>\ha$   and for all $\xi \in \mr^n,$ 
 \beq\label{ineqh}
 \begin{split}
  (1-\ga x)^\ha &\sum_{j,k=1}^n h^{jk}(0,y)  \ \xi_j \xi_k \leq \sum_{j,k=1}^n h^{jk}(x,y) \xi_j \xi_k \leq 
  (1+\ga x)^\ha \sum_{j,k=1}^n h^{jk}(0,y) \xi_j \xi_k,  \text{ and } \\
&  (1-\ga x)^\ha \sqrt{|h(0)|} \leq \sqrt{|h(x)|} \leq (1+\ga x)^\ha \sqrt{|h(0)|}.
 \end{split}
\eeq

Recall that Theorems \ref{main2}, \ref{main1} and \ref{doubleT} require that, if $x=e^{-\rho},$ the potential $V(x,y)$ satisfies
\[
  -V_0(\rho)- aV_1(\rho) \leq V(x,y) \leq  -V_0(\rho)+ aV_1(\rho), \text{ where } a>0 \text{ is a constant }
\]
and $V_0(\rho)$ and $V_1(\rho)$ satisfy, for $\rho \in (\rho_0,\infty),$  one of the following three assumptions:

\beq\label{ASV}
\begin{split}
& V_0(\rho)= c \rho^{-2+\del} \text{ and } V_1(\rho)= \rho^{-2+\del} (\log \rho)^{-\eps}, \;\ \eps>0, \text{ in Theorem \ref{main2} } \\
& V_0(\rho)= c \rho^{-2}  \text{ and } V_1(\rho)= \rho^{-2} (\log \rho)^{-\eps}, \;\ \eps>0, \text{ in Theorem \ref{main1} } \\
&V_0(\rho)= \oq \rho^{-2} -  \oq \sum_{j=1}^N \mcg_j(\rho)  + c_N \mcg_N(\rho)  \text{ and } V_1(\rho)= \mcg_N(\rho) (\log\rho)^{-\eps}, \;\ \eps>0, \\
& \text{ with }  \mcg_N(\rho) \text{  defined in \eqref{def-GN},} \text{ in Theorem  \ref{doubleT}.} 
\end{split}
\eeq

We will prove the following
\begin{prop}\label{abobel}  Let $Q^\bullet(X_\infty),$ $\bullet=N,D,$ is given by \eqref{quadF1}, then for $x_0$ small enough  and $\ga$ given by  \eqref{ineqh},
\beq\label{quadF2}
\mcq_-^\bullet(X_\infty)(\vphi,\vphi) \leq Q^\bullet(X_\infty)(\vphi,\vphi) \leq \mcq_+^\bullet(X_\infty)(\vphi,\vphi),
\eeq
where $\mcq_\pm^\bullet(X_\infty)$  are the quadratic forms defined by
\beq\label{quadF3}
\begin{split}
&  \mcq_{\pm}^\bullet(X_\infty)(\vphi,\vphi)= \\
  \int_{0}^{x_0} \int_{\p X}  (1\pm \ga x) \biggl( & \bigl|x \p_x \vphi\bigr|^2  + \sum_{j,k=1}^n h^{jk}(0,y) (x \p_{y_j} \vphi) (x 
  \p_{y_k} \bar \vphi)\biggr) \frac{\sqrt{|h(0)|}}{x^{n+1}} dy dx + \\
    \int_{0}^{x_0} \int_{\p X}  \biggl(-V_0(-\log x)\pm & \ a V_1(-\log x)+ x W^\pm(x)-\nsq \biggr)|\vphi|^2 (1\pm \ga x)^{n+1} \frac{\sqrt{|h(0)|}}{x^{n+1}} dy dx,   \\
 \vphi  \in \mcw_0^1(X_\infty),  \text{ if } \bullet=D,  & \;\  \vphi \in \bar \mch_0^1(X_\infty), \text{ if } \bullet=N, \text{ and } (h^{jk}(0))= (h_{jk}(0))^{-1},
\end{split}
\eeq
where 
\beq\label{Wpm}
\begin{split}
&  W^\pm (x)=  \bigl(-V_0(-\log x) \pm a V_1(-\log x)-\nsq\bigr)  F_{\pm}(x), \;\  F_\pm(x)= \frac{1}{x}\biggl(\frac{(1\pm \ga x)^\ha}{(1\pm \ga x)^{n+1}}-1\biggr).
\end{split}
\eeq
Moreover, $\mcq_\pm^\bullet(X_\infty)$ are respectively associated with the operators
\beq\label{GPM}
\begin{split}
 & \mcm_\pm = \Delta_{g_+} -V_0(-\log x)  \ \pm a V_1(-\log x)+x W^\pm(x) -\nsq, \\
& \text{ where }  g_{\pm }= (1\pm \ga x)^{\frac{2}{n-1}} \bigl(  \frac{dx^2}{x^2} + \frac{h(0)}{x^2}\bigr)
\end{split}
\eeq
with $\bullet=D,N,$ for  Dirichlet or Neumann boundary conditions.
\end{prop}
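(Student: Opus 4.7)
The proof of Proposition \ref{abobel} splits into two essentially independent tasks: establishing the two-sided quadratic form inequality \eqref{quadF2} by pointwise comparison of the three pieces of the integrand, and identifying $\mcq_\pm^\bullet$ as the closed quadratic forms of the operators $\mcm_\pm$ displayed in \eqref{GPM}.

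For the inequality, I would start from \eqref{quadF1} and treat the tangential kinetic, radial kinetic, and potential integrands separately. The tangential kinetic bound follows from \eqref{ineqh} applied both to $h^{jk}(x,y)$ and to $\sqrt{|h(x)|}$: the two factors $(1\pm\ga x)^{1/2}$ multiply to $(1\pm\ga x)$, exactly the coefficient in \eqref{quadF3}. The radial kinetic only directly yields $(1\pm\ga x)^{1/2}$, which is sharpened to $(1\pm\ga x)$ by the elementary bounds $(1+\ga x)^{1/2}\leq(1+\ga x)$ on the upper side and $(1-\ga x)\leq(1-\ga x)^{1/2}$ on the lower side (valid for $\ga x\leq 1$). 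For the potential, combine the two-sided hypothesis $-V_0-aV_1\leq V\leq -V_0+aV_1$ (the common formulation of the three cases in \eqref{ASV}, coming from Theorems \ref{main2}, \ref{main1}, and \ref{doubleT}) with the bounds on $\sqrt{|h(x)|}$, taking care that $V-n^2/4<0$ for $x$ small, so signs must be tracked when multiplying negatives by varying positive weights. The crucial algebraic identity, obtained by direct computation from the definitions \eqref{Wpm} of $F_\pm$ and $W^\pm$, is
\begin{equation*}
\bigl(-V_0(-\log x)\pm aV_1(-\log x)-\tfrac{n^2}{4}+xW^\pm(x)\bigr)(1\pm\ga x)^{n+1}=\bigl(-V_0(-\log x)\pm aV_1(-\log x)-\tfrac{n^2}{4}\bigr)(1\pm\ga x)^{1/2};
\end{equation*}
this is the raison d'\^etre of $W^\pm$, and it lets us rewrite the natural bound on the potential integrand (whose weight is a multiple of $\sqrt{|h(0)|}/x^{n+1}$) in the form appearing in \eqref{quadF3} (with weight $(1\pm\ga x)^{n+1}\sqrt{|h(0)|}/x^{n+1}$).

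For the identification of $\mcm_\pm$, I would apply the standard conformal-change formulas to $g_\pm=(1\pm\ga x)^{2/(n-1)}g_{\operatorname{prod}}$, where $g_{\operatorname{prod}}=dx^2/x^2+h(0)/x^2$. The volume element is $dv_{g_\pm}=(1\pm\ga x)^{(n+1)/(n-1)}\sqrt{|h(0)|}/x^{n+1}\,dx\,dy$, and the gradient-squared is $|\nabla_{g_\pm}\varphi|^2_{g_\pm}=(1\pm\ga x)^{-2/(n-1)}\bigl(|x\p_x\varphi|^2+\sum_{j,k}h^{jk}(0,y)(x\p_{y_j}\varphi)(x\p_{y_k}\bar\varphi)\bigr)$. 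The conformal exponent $2/(n-1)$ is chosen precisely so that $|\nabla_{g_\pm}\varphi|^2_{g_\pm}\sqrt{|g_\pm|}$ carries the combined factor $(1\pm\ga x)^{[(n+1)-2]/(n-1)}=(1\pm\ga x)$, matching the kinetic term of \eqref{quadF3}. Adding the multiplication operator corresponding to the remaining potential recovers all of $\mcq_\pm^\bullet$. Closability on the stated domains and the identification of $\mcm_\pm$ with the associated self-adjoint operator (with Dirichlet or Neumann conditions at $\{x=x_0\}$) follows by a standard Friedrichs extension argument, parallel to the treatment of $\Upsilon_\infty^\bullet$ via \eqref{qform1} in Section~\ref{DNB}.

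The main obstacle is the careful bookkeeping of $(1\pm\ga x)$-powers in the inequality step: the formula for $W^\pm$ in \eqref{Wpm} must be precisely calibrated so that the exponent $1/2$ produced by bounding $\sqrt{|h(x)|}$ is converted to the exponent $n+1$ that naturally appears via the conformally rescaled volume form. The matching of signs and monotonicities in this conversion is the only subtle point; beyond it, the rest of the argument is routine. One further mild verification is that $xW^\pm(x)=O(x)$ near $\p X$, so $W^\pm$ contributes a bounded subleading perturbation to the effective potential of $\mcm_\pm$, a property that will be important for the subsequent computation of $N_E(\mcm_\pm)$.
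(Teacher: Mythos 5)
Your proposal is correct and follows essentially the same route as the paper: pointwise comparison of the kinetic and potential integrands using \eqref{ineqh}, the algebraic identity $(1\pm \ga x)^{\ha}=(1\pm\ga x)^{n+1}(1+xF_\pm(x))$ (equivalently your displayed identity, which is exactly the paper's reason for introducing $W^\pm$), and the conformal-change computation of $\sqrt{|g_\pm|}$ and $\Delta_{g_\pm}$ to identify $\mcq_\pm^\bullet$ with $\mcm_\pm^\bullet$. The extra details you supply (the elementary upgrade of the factor $(1\pm\ga x)^{\ha}$ to $(1\pm\ga x)$ on the radial term, the sign bookkeeping for $V-\nsq<0$, closability of the forms, and $xW^\pm(x)=O(x)$) are points the paper leaves implicit, so nothing essential is missing.
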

\begin{proof}  We observe that because of \eqref{ineqh} 
\[
\begin{split}
& (1-\ga x) \biggl( |x \p_x \vphi|^2  +   h^{jk}(0,y) \ (x \p_{y_j} \vphi) (x \p_{y_k} \bar \vphi)\biggr) \sqrt{|h(0)|} \leq \\
&  \biggl(|x \p_x \vphi|^2  + h^{jk}(x,y) (x \p_{y_j} \vphi) (x \p_{y_k} \bar \vphi)\biggr) \sqrt{|h(x)|} \leq  \\
&  (1+\ga x) \biggl( \ |x \p_x \vphi|^2  + h^{jk}(0,y) (x \p_{y_j} \vphi) (x \p_{y_k} \bar \vphi)\biggr) \sqrt{|h(0)|},
\end{split}
\]
and 
\[
\bigl(V(x,y)-\nsq\bigr)(1-\ga x)^\ha  \sqrt{|h(0)|}\leq  \bigl(V(x,y)-\nsq\bigr) \sqrt{|h(x)|} \leq   \bigl(V(x,y)-\nsq\bigr)(1+\ga x)^\ha  \sqrt{|h(0)|}.
\]
But we may write
\[
\begin{split}
& (1\pm \ga x)^{\ha}=  (1\pm \ga x)^{n+1} (1+ x F_{\pm}(x)),\text{ where }  F_{\pm}(x)= \frac{1}{x}\biggl(\frac{(1\pm \ga x)^{\ha}}{1(\pm \ga x)^{n+1}}-1\biggr),
\end{split}
\]   
and so
\[
\begin{split}
& \bigl(V(x,y)-\nsq\bigr)(1\pm \ga x)^\ha = V(x,y)(1\pm \ha x)^{n+1}(1+x F_\pm(x)).
\end{split}
\]
Therefore
\[
\begin{split}
 \bigl( -V_0(\log x)-  a & V_1(-\log x) ) (1- x F_-(x))(1+\ga x)^{n+1} \sqrt{|h(0)|} \leq \\
&  \bigl(V(x,y)-\nsq\bigr) \sqrt{|h(x)|}  \leq \\
 \bigl( -V_0(\log x)+  a & V_1(-\log x) ) (1+ x F_+(x))(1+\ga x)^{n+1} \sqrt{|h(0)|} \text{ and } \\
\end{split}
\]
This proves \eqref{quadF2}. 
Notice that for $g_{\pm}$ as in \eqref{GPM},
\[
\begin{split}
& \sqrt{|g_{\pm}|} = \frac{(1\pm \ga x)^{\frac{n+1}{n-1}}}{x^{n+1}} |\sqrt{|h(0)}|,  \text{ and } \\
\Delta_{g_\pm }= & \ -\frac {x^{n+1}} {(1\pm \ga x)^{n+1}\sqrt{|h(0)|}}  \ \p_x\bigl( \frac{(1\pm \ga x)}{x^{n+1}}  x^2\sqrt{|h(0)|} \p_x\bigr)
 - \\
 & \frac {x^{n+1}}{(1\pm \ga x)^{n+1}\sqrt{|h(0)|}} \p_{y_j}\bigl( \frac{(1\pm \ga x)}{x^{n+1}} \sqrt{|h(0)|}  x^2 h^{jk}(0)\p_{y_k}\bigr),
 \end{split}
\]
and so, the quadratic forms associated with $g_{\pm}$  define the operators $\mcm_\pm^\bullet,$ as claimed. The ends the proof of the Proposition.

\end{proof}

We have shown that the quadratic forms $\mcq_\pm^\bullet(X_\infty)$ and $Q^\bullet(X_\infty)$  have the same domain $\mcw_0^1(X_\infty),$ when $\bullet=D$ and $ \bar \mch_0^1(X_\infty),$ if  $\bullet=N$ and moreover

\beq\label{quad-est0}
\begin{split}
&  \mcq_-^D(X_\infty)(\vphi, \vphi) \leq  Q^D(X_\infty)(\vphi,\vphi) \leq \mcq_+^D(X_\infty)(\vphi, \vphi), \;\ \vphi  \in \mcw_0^1(X_\infty),  \\
&  \mcq^N(X_\infty)(\vphi, \vphi) \leq  Q^N(X_\infty)(\vphi,\vphi) \leq \mcq^N(X_\infty)(\vphi, \vphi), \;\ \vphi  \in \bar \mch_0^1(X_\infty). 
\end{split}
\eeq

Notice that the $L^2(X_\infty)$ spaces defined with respect to $g$ or $g_\pm$ are the same, but are equipped with different  but equivalent norms, and there are constants $C_j^\pm,$ $j=1,2$ such that
\beq\label{squeeze}
\begin{split}
& C^-_1 || \vphi ||_{L^2_{{}_{g_-}}} \leq  || \vphi ||_{L^2_{{}_{g}}} \leq C^-_2 || \vphi ||_{L^2_{{}_{g_-}}},  \\
& C^+_1 || \vphi ||_{L^2_{{}_{g_+}}} \leq  || \vphi ||_{L^2_{{}_{g}}} \leq C^+_2 || \vphi ||_{L^2_{{}_{g_+}}}.
\end{split}
\eeq
  If we put together \eqref{quad-est0} and \eqref{squeeze}  we obtain 
\[
\begin{split}
& \frac{1}{C_2^-} \frac{\mcq_-^D(X_\infty)(\vphi,\vphi)}{\lan \vphi,\vphi\ran_{{}_{L_{g_-}^2}}} \leq 
\frac{Q^D(X_\infty)(\vphi,\vphi)}{\lan \vphi,\vphi\ran_{{}_{L_{g}^2}}} \leq  \frac{1}{C_1^+} \frac{\mcq_+^D(X\infty)(\vphi,\vphi)}{\lan \vphi,\vphi\ran_{{}_{L_{g_+}^2}}}, \;\ \vphi  \in \mcw_0^1(X_\infty), \\
& \frac{1}{C_2^-} \frac{\mcq_-^N(X_\infty)(\vphi,\vphi)}{\lan \vphi,\vphi\ran_{{}_{L_{g_-}^2}}} \leq 
 \frac{Q^N(X_\infty)(\vphi,\vphi)}{\lan \vphi,\vphi\ran_{{}_{L_{g}^2}}} \leq  \frac{1}{C_1^+} \frac{\mcq_+^N(X_\infty)(\vphi,\vphi)}{\lan \vphi,\vphi\ran_{{}_{L_{g_+}^2}}},  \;\ \vphi  \in \bar \mch_0^1(X_\infty).\\
\end{split}
\]

We will use $\mcm_\pm^\bullet$ to indicate the operator $\mcm_\pm$ with $\bullet=D,N.$ We also remark that one may extend the metrics $g_\pm$ to the manifold $X,$ so that it becomes an AHM, and as a consequence of Theorem \ref{specDN}  we obtain
\beq\label{specmod}
\sigma_{ess}(\mcm_\pm^\bullet)= [0,\infty), \;\ \bullet=D,N.
\eeq

We now appeal to the following characterization of the eigenvalues of a self-adjoint operator, see for example 
 page 1543 of \cite{Dun} or Theorem 3 of \cite{Sten}:
\begin{theorem}\label{min-max}  Let $H$ be a separable Hilbert space with inner product $\lan u,v\ran$ and let $A$ be a self adjoint operator corresponding to a semi-bounded quadratic form $Q$ with domain $\mcd(Q).$  Suppose that the essential spectrum of $A,$  $\sigma_{ess}(A)=[0,\infty),$ and its point spectrum satisfies
\[
\sigma_{pp}(A)=\{ \la_1 \leq \la_2 \leq \ldots \}.
\]
 For $u\in \mcd(Q),$ $u\not=0,$ let $R(u)=\frac{Q(u,u)}{\lan u,u\ran}$ denote the Rayleigh quotient, and  for $n\in \mn,$ let
\[
\mu_n= \inf \biggl\{ \max \bigl\{ R(u),  u \in \mcb \text{ such that } \mcb \subset \mcd(Q) \text{ is a subspace } 
\dim \mcb=n\bigr\}\biggr\}.
\]
Then $\mu_n\leq 0.$ If $\mu_n=0$ then $A$ has at most $n-1$ eigenvalues $\la_j<0,$ counted with multiplicity.  If $\mu_n<0,$ then $\mu_n=\la_n,$ is the n-th eigenvalue of $A$ counted with multiplicity.
\end{theorem}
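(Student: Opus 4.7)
The plan is to follow the classical Courant--Fischer--Weyl argument, adapted to the situation where $\sigma_{ess}(A)$ begins exactly at $0$ so that the only eigenvalues being ranked are the strictly negative ones. I would establish the three conclusions in the order stated.

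First, to show $\mu_n\leq 0$, I would exhibit an $n$-dimensional test subspace with Rayleigh quotients arbitrarily close to $0$. Since $0\in\sigma_{ess}(A)$, Weyl's criterion furnishes an orthonormal sequence $\{u_k\}\subset\mcd(A)\subset\mcd(Q)$ with $\|Au_k\|\to 0$. Given any $\eps>0$, pick $n$ elements $u_{k_1},\ldots,u_{k_n}$ with $\|Au_{k_j}\|\leq\eps/n$ and let $\mcb$ be their span. For any unit $v=\sum c_j u_{k_j}$ one has
\[
|Q(v,v)|=|\lan Av,v\ran|\leq \sum_{j,l}|c_jc_l|\,\|Au_{k_j}\|\leq \eps,
\]
so $\max_{\mcb} R\leq\eps$, and sending $\eps\to 0$ gives $\mu_n\leq 0$. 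Next, if $\mu_n=0$ but $A$ did admit $n$ negative eigenvalues $\la_1\leq\cdots\leq\la_n<0$ with orthonormal eigenvectors $\phi_1,\ldots,\phi_n$, then on $\mcb=\operatorname{span}(\phi_1,\ldots,\phi_n)$ the Rayleigh quotient of a unit $v=\sum c_j\phi_j$ equals $\sum\la_j|c_j|^2\leq\la_n<0$, giving $\mu_n\leq\la_n<0$, a contradiction. Thus $A$ has at most $n-1$ negative eigenvalues in this case.

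The third statement, $\mu_n=\la_n$ when $\mu_n<0$, is the main content. The inequality $\mu_n\leq\la_n$ follows immediately by taking $\mcb=\operatorname{span}(\phi_1,\ldots,\phi_n)$ as above, for which $\max R=\la_n$. For the reverse inequality, given an arbitrary $n$-dimensional subspace $\mcb\subset\mcd(Q)$, intersect $\mcb$ with the orthogonal complement of $\operatorname{span}(\phi_1,\ldots,\phi_{n-1})$: this is a codimension-$(n-1)$ closed subspace, so the intersection is at least one-dimensional and contains a unit vector $v\in\mcd(Q)$. Decomposing $v$ by the spectral theorem for $A$ as $v=\sum_{j\geq n}a_j\phi_j+v_c$, with $v_c$ in the spectral subspace for $[0,\infty)$, one obtains
\[
Q(v,v)=\sum_{j\geq n}\la_j|a_j|^2+Q(v_c,v_c)\geq \la_n\sum_{j\geq n}|a_j|^2\geq\la_n,
\]
using $\la_j\geq\la_n$, $Q(v_c,v_c)\geq 0$, $\la_n<0$, and $\|v\|=1$. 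Hence $\max_{\mcb}R\geq\la_n$, and taking the infimum over $\mcb$ yields $\mu_n\geq\la_n$.

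The only real technical obstacle is book-keeping with the form domain versus the operator domain: the Weyl sequence produced by the essential-spectrum hypothesis a priori lies in $\mcd(A)\subset\mcd(Q)$, and the spectral decomposition used in the third step must be compatible with the form, so that the Hilbert-space splitting $v=\sum a_j\phi_j+v_c$ produces the additive decomposition of $Q(v,v)$ used above and that $Q(v_c,v_c)\geq 0$ follows from the support of the spectral measure of $v_c$ lying in $[0,\infty)$. Both assertions are standard consequences of the spectral theorem for a self-adjoint operator associated to a semibounded closed form, and present no substantive difficulty; once verified, the argument is essentially the textbook min--max principle.
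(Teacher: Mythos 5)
The paper itself does not prove Theorem \ref{min-max}: it is quoted with references to page 1543 of Dunford--Schwartz and to Stenger, so there is no internal argument to compare with, and your proposal has to stand on its own. What you write is the standard Courant--Fischer--Weyl argument for the form version of the min--max principle, and its core steps are sound: the Weyl-sequence construction giving $\mu_n\leq 0$ (using $\mcd(A)\subset\mcd(Q)$ and $Q(v,v)=\lan Av,v\ran$ on $\mcd(A)$), the contrapositive argument in the case $\mu_n=0$, and, for the reverse inequality $\mu_n\geq\la_n$, the dimension count producing a unit $v\in\mcb$ orthogonal to $\phi_1,\ldots,\phi_{n-1}$ together with the splitting of $Q$ over the spectral subspaces for $(-\infty,0)$ and $[0,\infty)$. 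The form-domain book-keeping you flag at the end (spectral projections preserve $\mcd(Q)$, cross terms vanish, $Q(v_c,v_c)\geq 0$) is indeed standard for the operator associated with a closed semibounded form.

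The one genuine omission is the existence assertion hidden in the third conclusion. As the theorem is used in the paper (e.g.\ in Proposition \ref{eigen-ope}), the statement ``if $\mu_n<0$ then $\mu_n=\la_n$ is the $n$-th eigenvalue'' includes the claim that $A$ actually possesses at least $n$ eigenvalues in $(-\infty,0)$; your argument for both inequalities starts from orthonormal eigenvectors $\phi_1,\ldots,\phi_n$, i.e.\ it presupposes part of what must be proved, and your second step only yields the converse implication (at least $n$ negative eigenvalues $\Rightarrow$ $\mu_n<0$). The gap closes with the same device you already use for the reverse inequality: if $A$ had only $m\leq n-1$ eigenvalues below $0$, with orthonormal eigenvectors $\phi_1,\ldots,\phi_m$, then any $n$-dimensional $\mcb\subset\mcd(Q)$ contains a unit vector $v$ orthogonal to all of them; since $\sigma_{ess}(A)=[0,\infty)$ forces the range of $P_{(-\infty,0)}(A)$ to be the closed span of those eigenvectors, the spectral measure of $v$ is supported in $[0,\infty)$, hence $Q(v,v)\geq 0$ and $\max_{\mcb}R\geq 0$; taking the infimum over $\mcb$ gives $\mu_n\geq 0$, contradicting $\mu_n<0$. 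Insert this observation before your third step (it also guarantees $\la_n<0$, which your chain of inequalities uses), and the proof is complete.
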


The important aspect of this characterization is that there is no orthogonality required, as would be the case if we switched the order of $\max$ and $\min.$ As a consequence of Theorem \ref{specDN} and Theorem \ref{min-max} and \eqref{specmod} we arrive at the following:
\begin{prop}\label{eigen-ope} Let $\la_j(\Upsilon_\infty^D)$ and $\la_j(\mcm_\pm^D)$ denote the eigenvalues of the operators  $\Upsilon_\infty$ and $\mcm^\pm,$ with Dirichlet boundary conditions.  Similarly, let $\mu_j(\Ups_\infty^N)$ and $\mu_j(\mcm_\pm^N)$ denote the eigenvalues of these operators with Neumann boundary conditions.  If $\mcm_-^\bullet$  has finitely many eigenvalues, so do  $\mcm_+^\bullet$ and $\Ups_\infty^\bullet,$ $\bullet=D,N.$ If $\mcm_+^\bullet$ has infinitely many eigenvalues, so do $\mcm_-^\bullet$ and $\Ups_\infty^\bullet,$ $\bullet=D,N$ and  if  $C_j^\pm,$ $j=1,2,$ are as defined in \eqref{squeeze}, then for all $j,$
\beq\label{eigen3}
\begin{split}
& \frac{1}{C_2^-} \la_j(\mcm_-^D) \leq \la_j(\Ups_\infty^D) \leq \frac{1}{C_1^+} \la_j(\mcm_+^D), \\
& \frac{1}{C_2^-} \mu_j(\mcm_-^N) \leq \mu_j(\Ups_\infty^N) \leq \frac{1}{C_1^+} \mu_j(\mcm_+^N).
\end{split}
\eeq

In particular, in the case there exist infinitely many eigenvalues, then  for all  $E<0,$
\beq\label{count-rel}
\begin{split}
& N_{C_2^- E}(\mcm_-^\bullet) \leq N_E(\Ups_\infty^\bullet) \leq N_{C_1^+ E}(\mcm_+^\bullet), \;\ \bullet=D,N,
\end{split}
\eeq
where $N_E(T)$ is the counting function of negative eigenvalues defined in \eqref{counting}.
\end{prop}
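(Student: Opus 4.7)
The plan is to deduce the proposition from Theorem \ref{min-max} applied separately to $\Upsilon_\infty^\bullet$, $\mcm_-^\bullet$, and $\mcm_+^\bullet$, exploiting the fact that on $X_\infty$ these three operators are presented by quadratic forms sharing a common domain ($\mcw_0^1(X_\infty)$ for $\bullet=D$, $\bar \mch_0^1(X_\infty)$ for $\bullet=N$) and are pointwise comparable through the Rayleigh-quotient chain displayed just before the statement.

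First I will verify that Theorem \ref{min-max} applies to all three operators: by Theorem \ref{specDN} and \eqref{specmod}, each of $\Upsilon_\infty^\bullet$ and $\mcm_\pm^\bullet$ has essential spectrum $[0,\infty)$ with no embedded eigenvalues and arises from a semi-bounded quadratic form. Then I will push the pointwise Rayleigh-quotient chain
$$\frac{1}{C_2^-}\frac{\mcq_-^\bullet(\vphi,\vphi)}{\lan \vphi,\vphi\ran_{L^2_{g_-}}} \leq \frac{Q^\bullet(\vphi,\vphi)}{\lan \vphi,\vphi\ran_{L^2_g}} \leq \frac{1}{C_1^+}\frac{\mcq_+^\bullet(\vphi,\vphi)}{\lan \vphi,\vphi\ran_{L^2_{g_+}}}$$
through the min-max formula: for a given $n$, taking the maximum over an arbitrary $n$-dimensional subspace $\mcb$ of the common domain preserves the chain, and because the infimum over $\mcb$ is then taken against the same family for all three operators, it yields
$$\frac{1}{C_2^-}\mu_n(\mcm_-^\bullet)\leq \mu_n(\Upsilon_\infty^\bullet)\leq \frac{1}{C_1^+}\mu_n(\mcm_+^\bullet).$$

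Next I will extract the finite/infinite dichotomy from this chain using the criterion built into Theorem \ref{min-max}, namely that $\mu_n=0$ is equivalent to having at most $n-1$ negative eigenvalues. If $\mcm_-^\bullet$ has finitely many negative eigenvalues, pick $n$ with $\mu_n(\mcm_-^\bullet)=0$; the left inequality forces $\mu_n(\Upsilon_\infty^\bullet)\geq 0$, and combined with the general upper bound $\mu_n\leq 0$ this gives $\mu_n(\Upsilon_\infty^\bullet)=0$, so $\Upsilon_\infty^\bullet$ has finitely many; feeding this back into the right inequality produces $\mu_n(\mcm_+^\bullet)=0$, so $\mcm_+^\bullet$ does too. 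The other implication (``$\mcm_+^\bullet$ infinite $\Rightarrow$ the other two infinite'') is the contrapositive of the same statement run in the other direction along the chain, since $\mu_n(\mcm_+^\bullet)<0$ strictly propagates through the positive factors $1/C_1^+$ and $1/C_2^-$.

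Finally, in the infinite regime, Theorem \ref{min-max} identifies $\mu_n$ with the $n$-th negative eigenvalue $\la_n$, so the $\mu_n$ inequality specializes to the eigenvalue bound \eqref{eigen3}, and the counting-function estimate \eqref{count-rel} follows by translating the eigenvalue ordering into counts of eigenvalues below a negative threshold $-E$: if $\la_j$ of one operator is forced below such a threshold by the other, that operator's counting function at $-E$ is bounded below by the first operator's counting function at the correspondingly rescaled threshold, and similarly on the other side of the chain. I do not foresee any substantive obstacle; the only point demanding care is tracking the positive constants $C_1^+$ and $C_2^-$ on the correct side once the Rayleigh quotients go negative for bound states.
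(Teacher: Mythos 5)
Your proposal is correct and follows essentially the same route as the paper: the authors obtain Proposition \ref{eigen-ope} directly from the displayed Rayleigh-quotient chain on the common form domains together with Theorem \ref{min-max}, Theorem \ref{specDN} and \eqref{specmod}, which is exactly the argument you spell out (min-max over $n$-dimensional subspaces, the finite/infinite dichotomy via $\mu_n=0$, then translation into the counting functions). The one point you flag, tracking the constants $C_1^+$ and $C_2^-$ when the Rayleigh quotients are negative, is a looseness already present in the paper's own displayed inequalities and is immaterial for the eventual $\log\log N_E$ asymptotics, since Proposition \ref{asym-M} establishes the same bounds for both $\mcm_+^\bullet$ and $\mcm_-^\bullet$.
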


\section{ The Asymptotic Behavior of $N_E(\mcm_\pm^\bullet),$ $\bullet=D,N$  as $E\rightarrow 0$}

We will show that  under the hypotheses of either one of  Theorems \ref{main2}, \ref{main1} or \ref{doubleT}, and  if  $\rho_0=-\log x_0$ is  large enough, we have two possibilities: either both $\mcm_\pm^D$ and $\mcm_\pm^N$  have no negative eigenvalues, or  both have infinitely many.  In the latter case,  the iterated logarithms of the counting functions $N_E(\mcm_\pm^\bullet),$ $\bullet=D,N,$ defined in \eqref{counting} have the same asymptotic behavior $E\rightarrow 0,$ according to the asymptotic behavior of the potential as in  Theorems \ref{main2}, \ref{main1} and \ref{doubleT}.  More precisely, we will prove the following
\begin{prop}\label{asym-M}  Let $\mcm_\pm^\bullet,$ $\bullet=D,N,$ be defined as above and let $N_E(\mcm_\pm^\bullet)$ denote the corresponding counting function of eigenvalues. We have the following:
\begin{enumerate}[T.1]
\item \label{itT1} Suppose $V_0(\rho)$ and $V_1(\rho)$ satisfy \eqref{hyp-pot}.  If $\rho_0$ is large enough and $\del<0,$ then $\mcm_\pm^\bullet$ has no negative eigenvalues, but if $\del\in (0,2),$ then $N_E(\mcm_\pm^\bullet)$ satisfies 
\eqref{subc}.
\item \label{itT2} Suppose $V_0(\rho)$ and $V_1(\rho)$ satisfy \eqref{hyp-potTC}.  If $\rho_0$ is large enough and $c<\oq,$ then  $\mcm_\pm^\bullet$ has no negative eigenvalues, but if  $c>\oq,$ then $N_E(\mcm_\pm^\bullet)$ satisfies \eqref{bdhs1}.
\item \label{itT3} Suppose $V_0(\rho)$ and $V_1(\rho)$ satisfy  \eqref{def-GN}.  If $\rho_0$ is large enough and $c_N<\oq,$ then  $\mcm_\pm^\bullet$ has no negative eigenvalues, but if $c_N>\oq$  then $N_E(\mcm_\pm^\bullet)$ satisfies \eqref{bdhs4}.
\end{enumerate}
\end{prop}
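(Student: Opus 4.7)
The plan is to exploit the approximate product structure of $g_\pm$ near $\p X$ to decompose $\mcm_\pm^\bullet$ (after unitary conjugation) as a direct sum of one-dimensional Schr\"odinger operators on $(\rho_0,\infty)$, and then apply the Sturm oscillation theorem together with Weyl's law on $\p X$. The factor $(1\pm\gamma x)^{2/(n-1)}$ in \eqref{GPM} equals $1+O(e^{-\rho})$, and by \eqref{Wpm} the term $xW^\pm(x)$ is $O(e^{-\rho})(V_0+aV_1)$; both are subordinate to the $V_1$ of \eqref{ASV} for $\rho_0$ large, and after enlarging $a$ they are absorbed into $\pm aV_1$. Thus, without changing the finiteness/infiniteness dichotomy or the iterated-logarithm asymptotics, one may replace $\mcm_\pm^\bullet$ by
\[
\Delta_{g_0}-\nsq-V_0(-\log x)\pm aV_1(-\log x) \quad\text{on } L^2(X_\infty, dvol_{g_0}),
\]
where $g_0=dx^2/x^2+h(0)/x^2$ and the $\bullet$ boundary condition is imposed at $x=x_0$.

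\emph{Separation of variables.} In the variable $\rho=-\log x$, the conjugation $u\mapsto e^{n\rho/2}u$ is a unitary isomorphism $L^2(X_\infty, dvol_{g_0})\to L^2((\rho_0,\infty)\times\p X,\, d\rho\, dvol_{h(0)})$ sending $\Delta_{g_0}-\nsq$ to $-\p_\rho^2+e^{-2\rho}\Delta_{h(0)}$. Let $\{\phi_k\}_{k\geq 0}$ be an orthonormal basis of $L^2(\p X, dvol_{h(0)})$ of eigenfunctions of $\Delta_{h(0)}$ with eigenvalues $0=\nu_0^2\leq \nu_1^2\leq \cdots$. Expanding in this basis decomposes the conjugated operator as $\bigoplus_k L_k^{\pm,\bullet}$, where
\[
L_k^{\pm,\bullet}=-\p_\rho^2+e^{-2\rho}\nu_k^2-V_0(\rho)\pm aV_1(\rho) \quad\text{on } L^2(\rho_0,\infty),
\]
with the $\bullet$ boundary condition at $\rho_0$. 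By \eqref{P1}, $N_E(\mcm_\pm^\bullet)=\sum_k N_E(L_k^{\pm,\bullet})$.

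\emph{Finiteness and asymptotics.} In the sub-critical cases ($\del<0$ in \eqref{hyp-pot}; $c<\oq$ in \eqref{hyp-potTC}; $c_N<\oq$ in \eqref{def-GN}), the classical Hardy inequality $-\p_\rho^2\geq\tfrac14\rho^{-2}$ on $(\rho_0,\infty)$ and its iterates $-\p_\rho^2-\tfrac14\rho^{-2}-\cdots-\tfrac14\mcg_{j-1}(\rho)\geq\tfrac14\mcg_j(\rho)$ dominate the perturbation $V_0\mp aV_1$ once $\rho_0$ is large enough, giving $L_k^{\pm,\bullet}\geq 0$ for every $k$ (the $e^{-2\rho}\nu_k^2$ term only helps); hence $N_0(\mcm_\pm^\bullet)=0$. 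In the super-critical cases, Sturm oscillation on the half-line identifies $N_E(L_k^{\pm,\bullet})$ with the number of zeros in $(\rho_0,\infty)$ of the solution to $L_k^{\pm,\bullet}u=-Eu$ satisfying the boundary condition at $\rho_0$, and a WKB/Liouville--Green analysis yields, to $O(1)$,
\[
N_E(L_k^{\pm,\bullet})\asymp\frac{1}{\pi}\int_{\rho_-^k(E)}^{\rho_+(E)}\sqrt{V_0(\rho)-e^{-2\rho}\nu_k^2-E}\,d\rho,
\]
with outer turning point $\rho_+(E)$ independent of $k$ and determined by $V_0(\rho_+)=E$, giving $\rho_+\sim E^{-1/(2-\del)}$ in case \ref{itT1}, $\rho_+\sim E^{-1/2}$ in case \ref{itT2}, and $\log_{(N)}\rho_+\sim E^{-1/2}$ in case \ref{itT3}. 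The inner turning point $\rho_-^k(E)\sim\log\nu_k$ comes from the repulsive term $e^{-2\rho}\nu_k^2$, so only modes with $\nu_k\lesssim e^{\rho_+(E)}$ contribute. Summing over $k$ and applying Weyl's law $\#\{k:\nu_k\leq M\}\sim M^n$ on $\p X$ yields $\log N_E(\mcm_\pm^\bullet)\asymp\rho_+(E)$, which is exactly \eqref{subc}, \eqref{bdhs1}, or \eqref{bdhs4}.

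\textbf{Main obstacle.} The principal difficulty is producing matching upper and lower bounds: the Sturm/WKB count for each individual $L_k^{\pm,\bullet}$ is accurate only up to $O(1)$, yet the number of contributing modes is exponentially large in $\rho_+(E)$, so the ``edge'' modes with $\nu_k$ near the cutoff (where $L_k^{\pm,\bullet}$ has only a bounded number of negative eigenvalues) must be handled carefully to show that they contribute a subleading amount. In case \ref{itT3}, tracking the sharp Hardy constants $\tfrac14$ through all $N$ iterations, without losing or gaining at any level, is essential to locate the threshold at $c_N=\tfrac14$ precisely at level $N$ rather than at an earlier level.
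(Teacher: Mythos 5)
Your overall route (reduce to the product model $g_0$, separate variables in the eigenbasis of $\Delta_{h(0)}$, count zeros of half-line solutions, and sum with Weyl's law on $\p X$) is the same as the paper's, but two of your steps fail as written. First, the reduction: the factor $(1\pm\ga x)^{2/(n-1)}$ in \eqref{GPM} is a perturbation of the metric, not of the potential. In the separated picture it multiplies the angular term $e^{-2\vrho}\nu_k^2$ by $1+O(e^{-\vrho})$, and for large $\nu_k$ this is not dominated by $V_1$, so it cannot be ``absorbed into $\pm aV_1$.'' You must either change variables so that $(1\pm\ga x)x\p_x$ becomes $r\p_r$ (the paper's route, which leaves a harmless factor $q(\vrho)=1+O(e^{-\vrho})\geq\ha$ in front of $\zeta_je^{-2\vrho}$), or run a two-sided quadratic-form comparison and accept a constant rescaling of $E$, as in Proposition \ref{eigen-ope}. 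Second, and more seriously, your subcritical argument is wrong for $\bullet=N$: the Hardy inequality $\int_{\vrho_0}^\infty|v'|^2\,d\vrho\geq\oq\int_{\vrho_0}^\infty\vrho^{-2}|v|^2\,d\vrho$ is false without a boundary condition at $\vrho_0$, and in fact $-\p_\vrho^2-c\vrho^{-2}$ with the Neumann condition at $\vrho_0$ has a negative eigenvalue for every $c>0$ (test with $v\approx 1$ on $[\vrho_0,R]$, tapered to $0$, and let $R\rightarrow\infty$). So your claim $L_k^{\pm,N}\geq0$ fails, and the no-eigenvalue half of T.1--T.3 is unproved for Neumann. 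What saves the statement is that your unitary map $u\mapsto e^{n\vrho/2}u$ (which accounts for the volume factor $e^{n\vrho}$) converts the Neumann condition for $\mcm_\pm^N$ into the Robin condition $v'(\vrho_0)=\tfrac n2 v(\vrho_0)$, whose form carries the extra positive term $\tfrac n2|v(\vrho_0)|^2$; with that term the ground-state substitution does give nonnegativity for $\vrho_0$ large. The paper sidesteps Hardy entirely here, showing instead that the Cauchy solution has no zeros and invoking Proposition \ref{lmzero}.

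In the supercritical cases your upper bound is the right heuristic, but in case T.3 the turning point is misstated: since $V_0\sim\oq\vrho^{-2}$, the relation $V_0(\vrho_+)=E$ gives $\vrho_+\asymp E^{-1/2}$ up to iterated-log factors, not $\log_{(N)}\vrho_+\sim E^{-1/2}$; with your stated asymptotics, $\log N_E\asymp\vrho_+$ would give $\log_{(N+2)}N_E\asymp\ha\log E^{-1}$, contradicting \eqref{bdhs4}, so your own conclusion does not follow from what you wrote (with the corrected $\vrho_+$ it does, since the mode count $\sim e^{2n\vrho_+}$ dominates the per-mode zero count). More importantly, the matching lower bound is not just ``the main obstacle,'' it is the part of the proof you have omitted. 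The paper's mechanism requires no accurate per-mode count at all: one exhibits a window $[\vrho_1,\vrho_2]$, with $\vrho_2$ just inside the classically allowed region and $\vrho_1$ a fixed fraction (or fixed root, in the iterated-log cases) of $\vrho_2$, on which the angle $\theta=\tan^{-1}(w/w')$ increases by more than $\pi$, uniformly over all modes with $\zeta_j\leq\mu_L$, where $\mu_L$ is comparable to $Ee^{2\vrho_1}$; each such mode then contributes at least one eigenvalue, and Weyl's law gives $N_E\geq C\mu_L^n$, which already yields the lower bounds in \eqref{subc}, \eqref{bdhs1} and \eqref{bdhs4}. Without some such uniform-in-$k$ oscillation statement, your ``$\log N_E\asymp\vrho_+(E)$'' is only an upper bound plus a heuristic.
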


These results, together with equations \eqref{count-rel}, \eqref{eigen-est1} and \eqref{spec-X0} respectively prove Theorems \ref{main2}, \ref{main1} and \ref{doubleT}.

We  will consider the Dirichlet and Neumann eigenvalue problems in $X_\infty=\{x\leq x_0\}$ for the operators $\mcm_\pm^\bullet,$ $\bullet=D,N,$ defined above.  We will drop the $\pm$ sub-indices and work with $\ga, a \in \mr.$  We will assume that $x_0$ is small enough so that
$x|\ga|<\ha$ for $x\in (0, x_0).$  We will work with the metric 
\[
\mcg= (1+\ga x)^{\frac{2}{n-1}}\bigl(\frac{dx^2}{x^2}+ \frac{h(0)}{x^2}\bigr).
\]
We find that
\[
\Delta_\mcg= -x^{n+1} f^{-(n+1)} \p_x\left( f^{n-1} x^{1-n} \p_x\right) + x^2 f^{-2} \Delta_{h(0)}, \text{ where } \;\ f(x)=(1+\ga x)^{\frac{1}{n-1}}.
\]
To get rid of the factor $\nsq$ in $\mcm_\pm,$ we observe that
\[
\begin{split}
& x^{-\novt}(\Delta_\mcg-\nsq) x^{\novt}= - f^{-(n+1)} x\p_x(f^{n-1} x\p_x) + x^2 f^{-1} \Delta_{h(0)}-  x A(x), \text{ where }\\
 & A(x)=-\novt f^{-(n+1)}(x)\p_x( f^{n-1}(x)) + \frac{n^2}{4x}( f^2(x)-1).
 \end{split}
\]
So we want to study the eigenvalue problems corresponding to the operators $x^{-\novt}\mcm_\pm x^{\novt},$ which are of the form
\[
\begin{split}
 \bigl(- f^{-(n+1)} (f^{n-1} x\p_x)^2 + & x^2 f^{-2} \Delta_{h(0)}+ U(x)+ x \mcw(x) +E\bigr)u^\bullet = 0, \;\ E>0, \bullet=D,N, \\
&  u^D(x_0,y)= 0 \text{ or } \p_x u^N(x_0,y)=0, \\
 \text{ where }  \ U(x)= V_0(-\log x) & \ + a V_1(-\log x) \text{ and }  \mcw(x)= (U(x)-\nsq) F(x) - A(x), \;\ F\in C^\infty.
\end{split}
\]
We multiply the equation by $f^{n+1},$ and use that for $x$ small $f^{n+1}(x)=1+ x \wt f(x),$ $\wt f\in C^\infty,$  and we arrive at
\beq\label{eq0112}
\begin{split}
 \bigl(-(f^{n-1} x\p_x)^2 + & \ x^2 f^{n-1} \Delta_{h(0)} + U(x)+ x \wt \mcw(x) +E\bigr)u^\bullet = 0, \;\ E>0, \bullet=D.N\\
& u^D(x_0,y)= 0 \text{ or } \p_x u^N(x_0,y)=0, \\
& \wt \mcw= \mcw + \wt f(U+x\mcw+ E), \;\ \wt f \in C^\infty.
\end{split}
\eeq
Noice that $\wt \mcw(x)$ has a term of the form $E x \wt f(x),$ $f\in C^\infty.$ So one should keep in mind that  it depends on $E,$ but it will not affect the estimates below because this term goes to zero if $E\rightarrow 0.$.

Since $f^{n-1}= 1+\ga x,$ we define $r$ to be such that
\[
\frac{d r}{r}= \frac{dx }{x (1+\ga x)}, \;\ r=0 \text{ if } x=0,
\]
and therefore
\beq\label{xtil}
r= \frac{x}{1+\ga x} \text{ and so }  x= \frac{r}{1-\ga r}= r+ r^2 X(r), \;\ X\in C^\infty([0,r_0]), \;\ r_0 \text{ small enough.}
\eeq

Therefore, after the change of variables, equation \eqref{eq0112} becomes
\beq\label{eq0113}
\begin{split}
 \bigl(-(r\p_r)^2 + & \ r^2 (1+ rF(r)) \Delta_{h(0)} + U(x(r))+ x(r) \wt \mcw(x(r)) +E\bigr)u^\bullet = 0,\\
&  u^D(r_0,y)= 0 \text{ or } \p_r u^N(r_0,y)=0, \\
& \text{ where }  \;\ E>0,  \ \bullet=D,N, \text{ and } F \in C^\infty([0,r_0]).  
\end{split}
\eeq

We will need the following fact:
\begin{lemma} \label{chvar}  Suppose that $x=x(r)= r+ r^2X(r),$ with $X(r)\in C^\infty([0,r_0])$ and  $r_0$ is small enough. Let  $\rho=-\log x$ and let $\mcg_{(j)}(\rho)$ be as defined in \eqref{def-GN}, then
\beq\label{logxr}
\begin{split}
& \log_{(j)}(x(r)^{-1})= \log_{(j)}(r^{-1}) \biggl(1+ \frac{r}{(\log r) (\log_{(j)}r^{-1})} \Tau_j(r)\biggr),  \;\ \Tau_j(r)\in C^\infty((0, r_0]) \cap C^0([0, r_0]), \\
& \text{ and for } \alpha \in \mr, \\
& (\log_{(j)} (x(r))^{-1})^{\alpha}= (\log_{(2)} r^{-1})^{\alpha}\biggl( 1+ \frac{r}{(\log r^{-1}) (\log_{(j)} r^{-1})} T_{j,\alpha}(r)\biggr), \;\ T_{j,\alpha}\in C^\infty((0, r_0]) \cap C^0([0, r_0]).
\end{split}
\eeq
As a consequence of these we find that if $\mcg_{(j)}$ is given by \eqref{def-GN}, then
\beq\label{GNR}
\begin{split}
 & \mcg_{(j)}(\log( x(r)^{-1} )=  \ \mcg_{(j)}(\log (r^{-1}))\bigl(1+ \frac{r}{(\log r) (\log_{(2)} r^{-1})} \Tau_j(r)\bigr), \\ 
 & \text{ where }  \Tau_j \in C^\infty((0, r_0]) \cap L^\infty([0, r_0]).
 \end{split}
\eeq
 If  $V_0(\rho)$ and $V_1(\rho),$ $\rho=-\log x,$ are given by one of the alternatives of \eqref{ASV},  and $\vrho=-\log r,$ then
\beq\label{GNR1}
\begin{split}
& V_0(\rho)= V_0(\vrho) + e^{-\vrho} \mcv_0(\rho), \;\ \mcv_0\in C^\infty([\vrho_0,\infty)) \cap L^\infty([\vrho_0, \infty)), \\
& V_1(\rho)= V_1(\vrho) + e^{-\vrho} \mcv_1(\vrho), \;\ \mcv_1\in C^\infty([\vrho_0,\infty) \cap L^\infty([\vrho_0,\infty)).
\end{split}
\eeq
\end{lemma}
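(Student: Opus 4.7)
The starting point is that $x(r)=r(1+rX(r))$ with $X\in C^\infty([0,r_0])$, so
\[
\log x(r)^{-1}=\log r^{-1}-\log(1+rX(r))=\log r^{-1}+r\,\Phi_1(r),
\]
with $\Phi_1(r)=-\log(1+rX(r))/r\in C^\infty([0,r_0])$, using the expansion $\log(1+s)=s\cdot R(s)$ for $R$ smooth near $s=0$. This handles the case $j=1$ in \eqref{logxr}; the remaining identities should follow by iterating the logarithm and combining with Taylor expansions of $\log(1+u)$ and $(1+u)^{\alpha}$.

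I would proceed by induction on $j$. Assume
\[
\log_{(j)} x^{-1}=\log_{(j)} r^{-1}+\frac{r\,\Phi_j(r)}{\log r^{-1}\cdot \log_{(2)}r^{-1}\cdots \log_{(j-1)}r^{-1}},
\]
where $\Phi_j\in C^\infty((0,r_0])\cap C^0([0,r_0])$ and the empty product is interpreted as $1$. Applying $\log$ to both sides and factoring out $\log_{(j)} r^{-1}$ inside, one obtains
\[
\log_{(j+1)} x^{-1}=\log_{(j+1)} r^{-1}+\log\!\left(1+\frac{r\,\Phi_j(r)}{\log_{(j)}r^{-1}\cdot \prod_{k=1}^{j-1}\log_{(k)}r^{-1}}\right).
\]
For $r$ sufficiently small the argument of the outer logarithm lies in $(1/2,3/2)$, so $\log(1+u)=u\cdot S(u)$ with $S\in C^\infty$ near $0$ produces a correction of the required form with the denominator extended by one more factor of $\log_{(j)}r^{-1}$, closing the induction. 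Dividing by $\log_{(j)} r^{-1}$ yields the ratio form in \eqref{logxr}, and the second formula of \eqref{logxr} follows by applying $(1+u)^{\alpha}=1+\alpha u\cdot \wt T(u)$ with $\wt T\in C^\infty$ near $0$.

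Equation \eqref{GNR} is then obtained factor-by-factor: writing $\mcg_{(j)}(\log x^{-1})$ as a product of terms $(\log_{(k)}x^{-1})^{-2}$ and applying the power form of \eqref{logxr} with $\alpha=-2$ to each, the leading corrections collapse into a single expression of order $r/[(\log r)(\log_{(2)}r^{-1})]$ with coefficient of the claimed regularity. Finally, \eqref{GNR1} in each of the three cases in \eqref{ASV} follows from \eqref{GNR} combined with the observation that $V_0$ and $V_1$ are finite linear combinations of the quantities already handled; the $r$ in each remainder is absorbed into $e^{-\vrho}=r$, and what remains is the desired bounded smooth function $\mcv_0$ or $\mcv_1$ of $\vrho$.

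The only real obstacle is bookkeeping: tracking the correct number of $\log_{(k)}r^{-1}$ factors that appear in the denominator of each remainder, and verifying at every step that composition with $\log(1+\cdot)$ and with $(1+\cdot)^{\alpha}$ preserves the $C^\infty((0,r_0])\cap C^0([0,r_0])$ regularity of coefficients. There is no genuine analytic difficulty, since every expansion used is controlled by the single small parameter $r/\log r^{-1}\to 0$ as $r\downarrow 0$.
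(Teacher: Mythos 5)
Your proposal is correct and follows essentially the same route as the paper: expand $\log(1+t)=t\,f(t)$ and $(1+t)^{\alpha}=1+t\,f_{\alpha}(t)$, treat $j=1$ explicitly from $x(r)=r(1+rX(r))$, and induct on $j$ to get \eqref{logxr}, after which \eqref{GNR} and \eqref{GNR1} follow by applying these expansions factor by factor. Your additive bookkeeping with the full product $\prod_{k\le j-1}\log_{(k)}r^{-1}$ in the denominator is, if anything, a slightly sharper version of the same induction, and the regularity of the coefficients is preserved for exactly the reasons you indicate.
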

\begin{proof}  The point is that since for $t$ small, $\log(1+t)= t f(t),$ $f\in C^\infty,$ we find that
\[
\begin{split}
& \log x(r)= \log\bigl( r(1+ rX(r))\bigr)= \log r+ \log (1+ rX(r)) = \log r( 1+ \frac{r}{\log r} T(r)), \\
&  T(r)= X(r) f\bigl(r X(r))\in C^\infty ([0, r_0]).
\end{split}
\]

Similarly if $\alpha \in \mr,$ we have that $(1+t)^\alpha= 1+ t f_\alpha(t),$ $f_\alpha\in C^\infty,$  and so
\[
\begin{split}
& (-\log x(r))^\alpha = (-\log r)^{\alpha} ( 1+ \frac{r}{\log r} T(r))^\alpha= (-\log r)^{\alpha} (1+  \frac{r}{\log r}  T_{\alpha}(r)), \\ 
& T_{\alpha}= T(r) f_{\alpha}\bigl( \frac{r T(r)}{\log r}\bigr) \in C^\infty((0, r_0]) \cap L^\infty([0, r_0]).
\end{split}
\]
Using the same ideas we obtain
\[
\begin{split}
& \log_{(2)}\bigl( x(r)^{-1}\bigr)= \log \biggl( \log (r^{-1}) \bigl(1+\frac{r}{\log r} T(r)\bigr) \biggr)= \log_{(2)}(r^{-1}) + \log \bigl(1+\frac{r}{\log r} T(r)\bigr)= \\
&(\log_{(2)}( r^{-1}))\biggl( 1+ \frac{r}{(\log r)(\log_{(2)} r^{-1})} T_1(r)\biggr), \;\ T_1\in C^\infty((0, r_0]) \cap L^\infty([0, r_0]).
\end{split}
\]
Using induction we find  that 
\[
\log_{(j)}( x(r)^{-1})= (\log_{(j)} (r^{-1}))\bigl( 1+ \frac{r}{(\log r)(\log_{(j)} r^{-1})} T_j(r)\bigr), \;\ T_j \in C^\infty((0, r_0]) \cap L^\infty([0, r_0]),
\]
and this proves the first equation in \eqref{logxr}.  The second equation in \eqref{logxr} and the equations \eqref{GNR}  and \eqref{GNR1} follow directly from \eqref{logxr}.
\end{proof}

If we replace $r= e^{-\vrho},$ $\vrho\in [\vrho_0,\infty)$ in \eqref{eq0113}   (we have used $\rho=-\log x$ before, and  here we are using $\vrho=-\log r,$ and these are not the same) and to simplify the notation we will use $ \mcx(\vrho)=\wt \mcw(x(e^{-\vrho})),$ and  $ B(\vrho)= F(e^{-\vrho}),$ so   equation \eqref{eq0112} becomes
\beq\label{BVP}
\begin{split}
& \mcm u^\bullet= -E u^\bullet, \;\ \bullet=D,N \;\  u^D(\vrho_0,y)= 0 \text{ or } \p_\vrho u^N(\vrho_0,y)=0, \text{ where } \\
& \mcm= -\p_\vrho^2+  \ q(\vrho) e^{-2\vrho} \Delta_{h(0)} + U(e^{-\vrho})  +e^{-\vrho} \mcx(\vrho), \\
 &  q(\vrho)= 1+ e^{-\vrho} B(\vrho), \;\  B\text{ and } \mcx \in L^\infty([\vrho_0,\infty)) \cap C^\infty([\vrho_0,\infty)).
\end{split}
\eeq

We decompose $u^\bullet(\vrho,y)$ in Fourier series with respect to the eigenfunctions of the Laplacian $\Delta_{h(0)}$ on $\p X:$
\beq\label{eigdec}
\begin{split}
u^\bullet(\vrho,y)= & \sum_{j=0}^\infty  u_{j}^\bullet(\vrho)  \psi_j(y),  \;\ u_{j}^\bullet(\vrho)= \lan u^\bullet (\vrho,y), \psi_j(y)\ran_{{}_{L^2(\p X,h(0))}}, \text{ where } \\
& \Delta_{h(0)} \psi_j = \zeta_j \psi_j,  \;\ 0= \zeta_0 <\zeta_1 <  \zeta_2 \leq \zeta_3 \ldots 
\end{split}
\eeq

Let 
\beq\label{mult}
\begin{split}
& \mcy_j= \text{ the eigenspace  corresponding to } \zeta_j  \text{ and define }  \\
& m(\zeta_j)= \operatorname{dim} \mcy_j= \text{ multiplicity of } \zeta_j,
\end{split}
\eeq
and so we have that
\beq\label{mj}
\begin{split}
& L^2(X_\infty)= \bigoplus_{j=1}^\infty \mcy_j, \text{ and } \mcm^\bullet= \bigoplus_{j=1}^\infty \mcm_j^\bullet, \bullet=D,N, \text{ where } \\
& \mcm_{j}= -\left( \frac{d}{d\vrho}\right)^2 + e^{-2\vrho}q(\vrho)\zeta_j + U(e^{-\vrho}) +  e^{-\vrho} \mcx(\vrho).
\end{split}
\eeq
For each $j,$  $\mcm_{j}^\bullet,$ $\bullet=D,N,$  are self-adjoint operators and  $\sigma_{ess}(\mcm_j^\bullet)=[0,\infty),$ see for example \cite{Glad}.  We  prove in Appendix \ref{SPEC}, for the convenience of the reader,  that they have no eigenvalues in $[0,\infty),$ so  they have only negative eigenvalues which only possibly accumulate at zero.  It also follows from \eqref{mj} that if $E>0,$
\[
\operatorname{dim}P_{(-\infty,-E)}(\mcm^\bullet)= \sum_{j=1}^\infty\ m(\zeta_j) \operatorname{dim}P_{(-\infty,- E)}(\mcm_j^\bullet), \;\ \bullet=D,N,
\]
or in other words, 
\beq\label{decompM}
\begin{split}
& N_E(\mcm^\bullet)= \sum_{j=1}^\infty m(\zeta_j)N_E(\mcm_j^\bullet), \;\ \bullet= D,N,   
\end{split}
\eeq
  where as above, $N_E(T) $  denotes the counting function \eqref{counting}.

The eigenfunctions $\phi_{j}^\bullet(\vrho,E),$ $\bullet=D,N,$  with eigenvalue $-E$ satisfy
\beq\label{PDEk0}
\begin{split}
& (\mcm_{j}^\bullet +E)\phi_{j}^\bullet= 0,\\
& \phi_{j}^D(\vrho_0,E)=0, \;\  \p_\vrho \phi_{j}^D(\vrho_0,E)=1, \text{ or } \\
 & \phi_{j}^N(\vrho_0,E)=1, \;\  \p_\vrho \phi_{j}^N(\vrho_0,E)=0,
\end{split}
\eeq
just notice that  by dividing the equation by a constant, we can always assume either the function or its derivative is equal to one at $\vrho_0.$  But for  $E > 0,$  we will consider the Cauchy problems
\beq\label{EIGj}
\begin{split}
( \mcm_{j}^\bullet+E) u_{j}^\bullet(\vrho,E)=& \ 0,  \;\ \bullet=D, N  \text{ on } (\vrho_0,\infty), \text{ with boundary conditions } \\
& u_{j}^D(\vrho_0,E)=0, \;\  \p_\vrho u_{j}^D(\vrho_0,E)=1   \text{ or }  \\
&  u_{j}^N(\vrho_0,E)=1, \;\   \p_\vrho u_{j}^N(\vrho_0,E)=0,
\end{split}
\eeq
which have  unique solutions in $X_\infty.$  Even though  $u_j^\bullet(\vrho,E)$ exist for every $E,$ for $\vrho\in (\vrho_0,\infty),$ and are not necessarily eigenfunctions because they may not be in $L^2.$  In fact, it follows from Theorem \ref{olver} in Appendix \ref{SPEC} that   a solution $u_j^\bullet(\vrho,E)$  of \eqref{EIGj} is an eigenfunction if and only if $u_j^\bullet(\vrho,E)\sim C e^{-\vrho \sqrt{E}}$ as $\vrho\rightarrow \infty.$ The key point of the proof Proposition \ref{asym-M} is
\begin{prop}\label{lmzero}  Let $u_{j}^\bullet(\vrho,E),$ $\bullet=D,N$ be the unique solutions of the corresponding Cauchy problems in \eqref{EIGj}.   Let $Z_{j}^\bullet(E)$ denote the number of its zeros, which are  different from $\vrho=\vrho_0$ in the case $\bullet=D.$  If $E<0,$ then
\beq\label{zer-eigen}
N_E(\mcm_j^\bullet)= Z_j^\bullet(E), \;\ \bullet=D, N,
\eeq
and as a consequence of  \eqref{decompM} we have
\beq\label{NEV}
N_E(\mcm^\bullet)= \sum_{j=1}^\infty m(\zeta_{j})  Z_{j}^\bullet(E), \;\ \bullet=D,N.
\eeq
\end{prop}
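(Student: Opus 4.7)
The plan is to recognize this as the Sturm oscillation theorem in a singular-endpoint setting and to prove it via the Pr\"ufer transformation. After the Fourier decomposition \eqref{eigdec}--\eqref{mj}, each $\mcm_j^\bullet$ is a one-dimensional Schr\"odinger operator $-\p_\vrho^2 + P_j(\vrho)$ on $[\vrho_0,\infty)$ with bounded potential $P_j(\vrho)\to 0$ as $\vrho\to\infty$. By Theorem \ref{olver} it is in the limit-point case at $\infty$, has essential spectrum $[0,\infty)$ with no embedded eigenvalues, and its negative eigenvalues $\mu_1<\mu_2<\cdots$ accumulate only at $0$. Moreover a solution of $(\mcm_j+E)u=0$ with $E>0$ lies in $L^2$ at infinity if and only if it has the decaying asymptotic profile $u(\vrho)\sim C e^{-\sqrt{E}\,\vrho}$, which happens precisely when $-E$ is an eigenvalue of $\mcm_j^\bullet$.

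For fixed $E>0$, I would introduce Pr\"ufer variables by setting $u_j^\bullet(\vrho,E) = R(\vrho,E)\sin\theta(\vrho,E)$ and $\p_\vrho u_j^\bullet(\vrho,E) = R(\vrho,E)\cos\theta(\vrho,E)$, with $R>0$. A direct computation from \eqref{EIGj} gives
\[
\p_\vrho\theta = \cos^2\theta - (P_j(\vrho)+E)\sin^2\theta,
\]
with initial values $\theta(\vrho_0,E)=0$ when $\bullet=D$ and $\theta(\vrho_0,E)=\pi/2$ when $\bullet=N$. Zeros of $u_j^\bullet(\cdot,E)$ in $(\vrho_0,\infty)$ correspond exactly to $\theta\in\pi\mbz$, and at those points $\p_\vrho\theta=1$, so $\theta$ is strictly increasing through each multiple of $\pi$. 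Differentiating the $\theta$-equation in $E$ yields $\p_E\theta(\vrho,E)<0$ for $\vrho>\vrho_0$, so $E\mapsto\theta(\vrho,E)$ is strictly decreasing in $E$. For $E$ sufficiently large, $P_j+E>0$ everywhere and one checks that $\theta(\vrho,E)$ stays trapped in an interval containing no nonzero multiple of $\pi$, giving $Z_j^\bullet(E)=0$.

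The main obstacle, and the heart of the argument, is to identify the jumps of the integer-valued function $Z_j^\bullet(E)$. Using Theorem \ref{olver}, for each $E>0$ the equation $(\mcm_j+E)u=0$ admits a basis of solutions with the asymptotic form $e^{\pm\sqrt{E}\,\vrho}(1+o(1))$ as $\vrho\to\infty$; decomposing $u_j^\bullet(\vrho,E)$ in this basis gives coefficients $C_\pm(E)$ that depend continuously on $E$ and satisfy $C_+(E)=0$ precisely at the points $E=E_k:=-\mu_k$. Between consecutive eigenvalues the growing coefficient $C_+(E)$ has a fixed sign, so $u_j^\bullet(\vrho,E)$ is eventually nonzero of constant sign for large $\vrho$; combined with the monotonicity of $\theta$ through multiples of $\pi$ this forces $Z_j^\bullet$ to be locally constant there. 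As $E$ decreases across $E_k$, the sign change of $C_+(E)$ causes the limiting phase $\theta_\infty(E):=\lim_{\vrho\to\infty}\theta(\vrho,E)$ to advance by exactly $\pi$, which geometrically means exactly one new zero of $u_j^\bullet(\cdot,E)$ enters from $+\infty$.

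Combining these observations, $Z_j^\bullet(E)=\#\{k:E_k>E\}=N_E(\mcm_j^\bullet)$, which proves \eqref{zer-eigen}. The identity \eqref{NEV} then follows immediately by summing over $j$ with multiplicities via \eqref{decompM}. The nontrivial analytic input, beyond the essentially algebraic Pr\"ufer calculation, is the uniform-in-$E$ control of the asymptotics of $u_j^\bullet(\vrho,E)$ at $\vrho=\infty$ supplied by the Olver-type result in Appendix \ref{SPEC}, which is what justifies taking the limit $\theta_\infty(E)$ and tracking its jumps across eigenvalues.
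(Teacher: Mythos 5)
Your Pr\"ufer-transformation strategy is a legitimate route and genuinely different from the paper's proof in Appendix \ref{Zeros}, which follows Reed--Simon XIII.8: there the monotonicity of the zero count in $E$ and in $\zeta_j$ is obtained from Wronskian comparison integrals (Lemma \ref{lemma1}, with Theorem \ref{olver} used only to justify convergence of the integral on $(\rho_n,\infty)$ when one solution is an eigenfunction), and the cap on the number of zeros comes from a min--max argument with trial functions equal to $u_j^\bullet(\cdot,E)$ between consecutive zeros, plus the identity $Z_j^\bullet(\la_{j,k}^\bullet)=k-1$ (Lemma \ref{lemma2}). Your preliminary steps are correct: the phase equation, the strict inequality $\p_E\theta(\vrho,E)<0$ for $\vrho>\vrho_0$ with your sign convention, the fact that zeros correspond to crossings of $\pi\mbz$ and that $\theta$ can only cross them increasingly, the absence of zeros for $E$ large, and the characterization (limit-point endpoint plus Theorem \ref{olver}) that the growing coefficient $C_+(E)$ vanishes exactly at the eigenvalues.

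The gap is at the decisive step: you assert that as $E$ decreases across an eigenvalue the limiting phase ``advances by exactly $\pi$,'' so exactly one new zero enters from infinity, and that $Z_j^\bullet(E)$ is locally constant between eigenvalues. The lower bound (at least one new zero per crossing) does follow from your monotonicity of $\theta$ in $E$ together with the observation that $\tan\theta_\infty$ is negative at an eigenvalue and positive otherwise --- this is the analogue of item 3 of Lemma \ref{lemma1}. But nothing in your sketch excludes the phase advancing by $2\pi$ or more at a crossing, or an extra zero appearing far out when $E$ moves between eigenvalues; that exclusion is precisely the content of \eqref{zer-eigen}, so ``exactly $\pi$'' as stated restates the proposition rather than proves it. To close it along your lines you need either a locally uniform-in-$E$ version of the asymptotics (Theorem \ref{olver} is stated for a fixed equation; continuity of $C_\pm(E)$ with error bounds uniform on compact $E$-intervals must be established, e.g.\ by a Volterra construction of the decaying solution), combined with a disconjugacy argument on a half-line $[R,\infty)$ where $P_j+E>0$ to control zeros beyond $R$; or else the paper's variational step, $N_E(\mcm_j^\bullet)\geq Z_j^\bullet(E)$ via min--max over the span of the pieces of $u_j^\bullet(\cdot,E)$ between its zeros, which caps the count without any uniform asymptotics. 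You correctly flag the uniform control as ``the nontrivial analytic input,'' but since it is exactly where the counting identity is decided, it has to be carried out, not cited; with it (or with the min--max lemma) your argument would be complete.
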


This  result is somewhat well known and its proof is essentially, but not quite, the same as the proof of Theorem XIII.8 of \cite{RS4}. For the convenience of the reader, we  provide the details in Appendix \ref{Zeros}.

\subsection{ The Set up of the Problems}\label{asymp}   Now we have to count the zeros of solutions of \eqref{EIGj}. We set up the general type of problem for $U(e^{-\vrho})=-V_0(\vrho)+ a V_1(\vrho)$ with $V_j(\vrho),$ $j=0,1,$ satisfying one of the alternatives in \eqref{ASV}.  The arguments we use do not depend on the boundary condition, so we work with the Dirichlet problem in \eqref{EIGj}. We  consider the problem

\beq\label{pde-Rd}
\begin{split}
& \left(-\frac{d^2}{d\vrho^2}- V_0(\vrho) + e^{-2\vrho}(1+ B(\vrho)e^{-\vrho}) \mu +E  +  aV_1(\vrho) + e^{-\vrho} \mcx(\vrho) \right) u=0, \;\ E>0, \\
& \quad \quad \quad \quad \quad \quad \quad \quad \quad u(\vrho_0)=0, \;\ \p_\vrho u(\vrho_0)=1, 
\end{split}
\eeq
 where $B(\vrho), \mcx(\vrho)\in C^\infty([\vrho_0,\infty))\cap L^\infty([\vrho_0,\infty)).$   We will denote
 \beq\label{mcr-mcp}
\begin{split}
& \mcr(\vrho)= aV_1(\vrho) + e^{-\vrho} \mcx(\vrho), \text{ and } \\
& \mcp(\vrho) =  \mu  e^{-2\vrho}(1+ e^{-\vrho}B(\vrho))+E,
\end{split}
 \eeq
 and \eqref{pde-Rd} is reduced to
 \beq\label{pde-Rd1}
 \begin{split}
& \left(-\frac{d^2}{d\vrho^2}- V_0(\vrho) + \mcp(\vrho)+ \mcr(\vrho) \right) u=0, \;\ E>0, \\
& \quad \quad \quad u(\vrho_0)=0. 
\end{split}
\eeq
 
We will deal with each one of the cases  \eqref{hyp-pot}, \eqref{hyp-potTC} and in general  \eqref{def-GN} separately.
\subsection{Proof of Item T.\ref{itT1} of Proposition \ref{asym-M}}   In this case 
\[
V_0(\vrho)=c \vrho^{-2+\del} \text{ and } V_1(\vrho)= a \vrho^{-2+\del}(\log \vrho)^{-\eps}.
\]
 We multiply  equation \eqref{pde-Rd1} by $\vrho^{2-\del},$  and notice that
\begin{gather*}
\vrho^{-\ha(1-\dovet)}(\vrho^{2-\del} \frac{d^2}{d\vrho^2} )  \vrho^{\ha(1-\dovet)} = (\vrho^{1-\dovet}\frac{d}{d\vrho})^2-\oq(1-\frac{\del^2}{4}) \vrho^{-\del}.
\end{gather*}
So if $u(\vrho)= \vrho^{\ha(1-\dovet)} w(\vrho),$ then \eqref{pde-Rd1} becomes
\beq\label{conj-pde-R1d}
\begin{split}
 \biggl( & -(\vrho^{1-\dovet} \frac{d}{d\vrho})^2  -c+ \oq(1-\frac{\del^2}{4}) \vrho^{-\del}+ \mce(\vrho) \biggr) w=0, \\
& w(\vrho_0)=0, \text{ where } \\
& \mce(\vrho)= \vrho^{2-\del}( \mcp(\vrho)+ \mcr(\vrho)),  \text{ and } \\
& \mcr(\vrho)=  a \vrho^{-2+\del} (\log \vrho)^{-\eps}+  \ e^{-\vrho} \mcx(\vrho), \;\  \eps>0, \\
& \mcp(\vrho)= \mu e^{-2\vrho}\bigl(1+ e^{-\vrho} B(\vrho)\bigr)+E.
\end{split}
\eeq

Set $t=\frac{2}{|\del|} \sqrt{c} \ \vrho^{\dovet},$ and in this case \eqref{conj-pde-R1d} becomes
\beq\label{conj-pde-del}
\begin{split}
& \left(-\frac{d^2}{dt^2} - 1 +\frac{1}{c} \mce (\vrho(t)) + \frac{1}{4c}(1-\frac{\del^2}{4})\left(\frac{ t|\del|}{2\sqrt{c}}\right)^{-2}\right) w=0, \;\  t=\sqrt{c}\ \frac{2}{|\del|} \vrho^{\dovet}, \\
& \quad \quad \quad w(t_0)=0.
\end{split}
\eeq

The case $\del<0:$ Since $\mcx(\vrho)\in L^\infty,$ it follows from the assumptions on the decay of $V_0(\vrho)$ and $V_1(\vrho)$ that
$\vrho^{2-\del}\mcr(\vrho)\rightarrow 0$ as $\vrho\rightarrow \infty.$ Since  $\mcp(\vrho)>0$ for $\vrho$ large,  there exists $t_0>0$ independent of $E$ and $\mu$ such that for $t<t_0,$ 
\[
\begin{split}
& U(t)=-1 + \frac{1}{c} \mce(\vrho(t))+ \frac{1}{4c}(1-\frac{\del^2}{4})\left(\frac{ t|\del|}{2\sqrt{c}}\right)^{-2}> \\
& -1 + \frac{1}{4c}\vrho^{2-\del}\mcr(\vrho) + \oq(1-\frac{\del^2}{4})\left(\frac{ t|\del|}{2\sqrt{c}}\right)^{-2}>0,
\end{split}
\]
and so   $w(t)$ is a solution of a differential equation 
\beq\label{DN-cond}
\begin{split}
\frac{d^2 w}{dt^2}& = U(t) w, \; t<t_0, \text{ with } U(t)>0, \\
& w(t_0)=0 
\end{split}
\eeq
We have three possibilities for $w'(t_0):$  $w'(t_0)=0,$ $w'(t_0)>0$ or $w'(t_0)<0.$ If $w'(t_0)=0,$  then by uniqueness, $w(t)=0$ for $t<t_0.$  If $w'(t_0)>0,$  since $w(t)$ is $C^\infty,$  there exists $t_1<t_0$ such that $w'(t)>0$ for 
$t_1<t\leq t_0$ and so $w(t)<0$ for $t_1<t\leq t_0$ and therefore, $w''(t)<0$ for $t_1<t\leq t_0,$ and so, $w'(t)<w'(t_0)<0$ for $t_1<t<t_0$ and $w(t)<0$ for $t_1<t<t_0.$  Repeating this argument we conclude that $w(t)<0$ for all $t<t_0.$  If $w'(t_0)<0,$ since $-w(t)$ also solves the equation, then $w(t)>0$ for all $t<t_0.$   Therefore we conclude that either $w(t)=0$ for all $t>t_0$ or $w(t)$ has no zeros for $t>t_0.$ 

 In this case  we conclude from \eqref{NEV} that for this choice of $\vrho_0,$
\beq\label{zeroE}
N_E(\mcm ^\bullet)=0, \;\ \bullet=N, D.
\eeq

The case $0<\del<2:$ In this case, in view of the discussion above,  the set of zeros of the solution $w$ is contained in the set
\[
\{\vrho>\vrho_0: -1+\frac{1}{c}  \mce_1(\vrho(t))\leq 0 \} \text{ where }  \mce_1(\vrho)= \vrho^{2-\del}\bigl(\mcp(\vrho)+\mcr(\vrho) +\oq (1-\frac{\del^2}{4}) \vrho^{-2}\bigr).
\]

We want to obtain upper and lower bounds on the number of zeros of the solution $w(t)$ of \eqref{conj-pde-del} and consequently upper bounds on the number of eigenvalues of $\mcm^\bullet,$ $\bullet=N,D.$   We begin by taking $\vrho_0$ large enough, and independently of $\mu$ and $E,$  such that
\beq\label{R01}
\left|\frac{1}{c}\vrho^{2-\del} \mcr(\vrho)+ \frac{1}{4c}(1-\frac{\del^2}{4}) \vrho^{-\del}\right| \leq \ha \text{ for all } \vrho>\vrho_0.
\eeq

Next we obtain upper bounds for the number of zeros of $w(\rho(t)),$ in $ t\in [t_0,\infty)$ where $t_0$ corresponds to $\rho_0.$  Notice that, for this choice of $\vrho_0,$ since $E$ is small,
\[
\begin{split}
\biggl\{\vrho>\vrho_0: & \ -1+\frac{1}{c}   \mce_1(\vrho(t))\leq 0 \biggr\} \subset \biggl\{\vrho\geq \vrho_0:-1\leq  \frac{1}{c} \mce_1(\vrho) \leq \tha \biggr\} \subset \\
&  \biggl\{\vrho \geq \vrho_0: \frac{\vrho^{2-\del}}{c}\bigl(E+ \mu e^{-2\vrho}(1+B(\vrho)e^{-\vrho})\bigr) \leq 2\biggr\},
 \end{split}
\]
and one may yet taken $\vrho_0$ larger if necessary,  such that $1+ e^{-\vrho}B(\rho)\geq \ha,$ and so 
\beq\label{subset-del}
\begin{split}
 \biggl\{\vrho \geq \vrho_0:  \ \frac{\vrho^{2-\del}}{c}\bigl(E+ \mu e^{-2\vrho}(1+B(\vrho)e^{-\vrho})\bigr) \leq 2\biggr\} \subset 
\biggl \{\vrho \geq \vrho_0: \frac{\vrho^{2-\del}}{c}(E+ \ha\mu e^{-2\vrho}) \leq 2 \biggr\}.
\end{split}
\eeq
 Also, if $\vrho_0$ is large,  
 \[
 \bigg\{\vrho \geq \vrho_0: \frac{\vrho^{2-\del}}{c}(E+ \ha \mu e^{-2\vrho}) \leq 2\biggr\}=[ \vrho_L,\vrho_U],
 \]
 and this is because if $F(\vrho)= \vrho^{2-\del}(E+\ha \mu e^{-2\vrho}),$ then
 \[ 
F'(\vrho)+ F''(\vrho)=(2-\del)\vrho^{-\del}(E+ \ha \mu e^{-2\vrho})(\vrho+1-\del)+ 4\mu \vrho^{1-\del} e^{-2\vrho}(\vrho-2(2-\del))>0,
\]
 and therefore, if $F'(\vrho)=0,$ then $F''(\vrho)>0,$ so if $F(\vrho)$ has a critical point, it is a local minimum.
  We then observe that
 \beq\label{subset-del1}
 \begin{split}
  & \{\vrho \geq \vrho_0: \frac{\vrho^{2-\del}}{c}(E+ \ha \mu e^{-2\vrho}) \leq 2\} =[\vrho_L, \vrho_ U]\subset 
 \bigl\{ \vrho\geq \vrho_0: \frac{E}{c}\vrho^{2-\del}  \leq 2\bigr\} \cap \bigl\{ \vrho\geq \vrho_0:  \frac{\mu}{2c} \vrho^{2-\del} e^{-2\vrho} \leq 2\bigr\}.
\end{split}
\eeq

 So if we take $\vrho_{u_\del}$ such that
 \[
 \frac{E}{c}\vrho_{u_\del}^{2-\del} = 2, \text{ so }  \vrho_{u_\del}=\left(\frac{2c}{E}\right)^{\frac{1}{2-\del}},
 \]
 then $\vrho_U \leq \vrho_{u_\del}$ and we  will get an upper bound on the number of zeros of $w(\vrho(t))$ in the interval 
 $[t_0, t_{u_\del}],$ where $\vrho_0= \vrho(t_0)$ and $\vrho_{u_\del}= \vrho(t_{u_\del}).$  This gives an upper bound on the zeros of $w(t)$ in $[t_0,\infty).$
 
Since $\vrho^{2-\del}$ is an increasing function, then 
 \[
 \frac{E}{c}\vrho^{2-\del} \leq 2 \text{ for } \vrho\leq \vrho_{u_\del},
 \]
 but  in view of \eqref{subset-del} if $\vrho\leq \vrho_{u_\del},$ $\frac{1}{c}\vrho^{2-\del}(E+\ha \mu e^{-2\vrho})\leq 2,$ only if $\mu$ satisfies
 \[
  \mu \leq \frac{4c}{\vrho^{2-\del}} e^{2\vrho} \leq \frac{4c}{\vrho_{u_\del}^{2-\del}} e^{2\vrho_{u_\del}}= 2E e^{\left(\frac{2c}{E}\right)^{\frac{1}{2-\del}}} \defeq \mu_{{}_{\del,U}}
  \]
then 
\[
 \frac{1}{c}  \mce_1(\vrho)\leq \tha \text{ provided } \vrho\in[ \vrho_0, \vrho_{u_\del}] \text{ and } \mu \leq \mu_{{}_{\del,U}}.
\]

  As usual, see for example Chapter 8 of \cite{CL},  to count the zeros of $w(t)$ for $t\leq t_{u_\del}= 2\frac{\sqrt{c}}{\del} \vrho_{u_\del}^{\frac{\del}{2}},$  one sets
\[
\theta(t)=\tan^{-1}\left( \frac{w(t)}{w'(t)}\right), \text{ where } w(t) \text{ satisfies \eqref{conj-pde-del}. }
\]
The number of zeros of $w$  in the interval $[t_0, t_{u_\del}],$ coincides with the number of times $\theta(t)= k\pi,$ for some $k\in \mn.$  But it follows from \eqref{conj-pde-del} that 
\beq\label{difftheta-del}
\frac{d \theta}{d t} =\frac{ (w')^2- w'' w}{w^2+(w')^2}= 1-\frac{1}{c} \mce_1(t(\vrho)) \left( \frac{ w^2}{w^2+(w')^2}\right),
\eeq
and  since $-1\leq \frac{1}{c}\mce_1(t(\vrho))<\tha,$ it follows that $\left|\frac{d \theta}{d t}\right| \leq 3,$
and so
\beq \label{esttheta0-del}
\theta(t_{u,\del})-\theta(t_0)\leq   3(t_{u_\del}-t_0) \leq 3 t_{u_\del}, \;\ t_{u_\del}=\frac{2\sqrt{c}}{\del} \vrho_{u_\del}^{\frac{\del}{2}}.
\eeq
Therefore if $ Z(w)$ denotes the number of zeros of $w(t),$ then
\[
 Z(w)\leq  \frac{3 t_{u_\del}}{\pi}= \frac{6\sqrt{c}}{\pi \del} \biggl(\frac{2c}{E}\biggr)^{\frac{\del}{2(2-\del)}}.
 \]
So we conclude that for $E$ small
 \[
  Z_{j}^\bullet(E) \leq \frac{6\sqrt{c}}{\pi \del} \left(\frac{2}{cE}\right)^{\frac{\del}{2(2-\del)}}  \text{ for all } j.
 \]
  But  in view of  \eqref{mult} and \eqref{NEV}, as $E\rightarrow 0,$  we have for $\bullet=D,N,$ 
 \[
 \begin{split}
   N_E(\mcm^\bullet ) \leq  \sum_{\zeta_j\leq \mu_{{}_{\del,U}}}  m_j(\zeta_j)  Z_{j}^\bullet (E) \leq  C \left(\frac{2}{cE}\right)^{\frac{\del}{2(2-\del)}} \sum_{\zeta_j\leq \mu_{{}_{\del,U}}} m_j(\zeta_j) = C \left(\frac{2}{cE}\right)^{\frac{\del}{2(2-\del)}} N_{\mu_{{}_{\del,U}}}(\Delta_{h(0)}),
 \end{split}
 \]
  $\mcn_{\ka}(\Delta_{h(0)}),$ is the number of  eigenvalues of  $\Delta_{h(0)}$ which are less than or equal to  $\ka$ counted with multiplicity.  Weyl's Law, see  for example Corollary 17.5.8 of \cite{Hor3},  says that 
 \beq\label{weyl}
 \mcn_{\ka}(\Delta_{h(0)})= C_n \ka^n + O(\ka^{n-1}),
 \eeq
  and this implies that
 \[
  N_E(\mcm^\bullet) \leq    C \left(\frac{2}{cE}\right)^{\frac{\del}{2(2-\del)}} \mu_{{}_{\del,U}}^n=  C \left(\frac{2}{cE}\right)^{\frac{\del}{2(2-\del)}} \left( E e^{\left(\frac{2}{cE}\right)^{\frac{1}{2-\del}}}\right)^n,
  \]
  and we find that
  \[
  \log \log N_E(\mcm^\bullet)\leq \frac{1}{2-\del} \log E^{-1} + O(1), \text{ as } E\rightarrow 0,
  \]
  which is the upper bound of \eqref{subc}.
  
  To obtain a similar lower bound, we  will find $\vrho_1=\vrho_1(E)< \vrho_2(E)=\vrho_2,$ such that $\vrho_1(E)>\vrho_0,$ for $E$ small enough with $\vrho_0$ as in \eqref{R01}, and $\mu_{\del,L}$ such that 
  \[
   \frac{\vrho^{2-\del}}{c}(E+ \ha \mu e^{-2\vrho}) \leq \frac14 \text{ for } \vrho\in [\vrho_1,\vrho_2] \text{ and } \mu \leq \mu_{\del,L} 
   \]
   and this implies that $\frac{1}{c} \mce_1(\vrho)\leq \frac34$ and  so  
   \[
   [\vrho_1,\vrho_2] \subset \{ \vrho \geq \rho_0: \frac{1}{c} \mce_1(\vrho)< 1\},
   \]
   and therefore the number of zeros of $w(t)$ in this interval is less than or equal to the number of zeros of $w(t)$ in $[\vrho_0,\infty).$  But we deduce from  \eqref{difftheta-del} that
   \[
   \theta(t_2)-\theta(t_1) \geq \frac14(t_2-t_1),
   \]
   and so 
   \[
   Z_{j}^\bullet(E)\geq \frac{1}{4\pi}(t_2-t_1), \;\ t_j= \frac{2\sqrt{c}}{\del} \vrho_j^{\frac{\del}{2}}, \;\ j=1,2.
   \]
   
   Notice that, since $E\vrho^{2-\del}$ is an increasing function and $\mu\vrho^{2-\del}e^{-2\vrho}$ is an decreasing function,  we choose $\vrho_2$ such that
 \[
 \frac{E}{c} \vrho_2^{2-\del}= \frac18, \;\ \vrho_2=\left(\frac{c}{8E}\right)^{\frac{1}{2-\del}},
 \]
 and for $\vrho_1=\ha \vrho_2,$ we only pick  $\mu$ such that
 \[
  \frac{ \mu}{2c} \vrho_1^{2-\del}e^{-2\vrho_1}\leq \frac18,
  \]
  but this implies that 
 \[
 \mu\leq \frac{c}{4 \vrho_1^{2-\del}} e^{2\vrho_1} \leq \frac{c}{4 \vrho_2^{2-\del}} e^{2\vrho_2}=
2E  e^{\left(\frac{c}{8E}\right)^{\frac{1}{2-\del}}} \defeq \mu_{{}_{\del,L}}.
 \]
 Therefore, for this choice of $\vrho_1$ and $\vrho_2,$ and $\mu\leq \mu_{{}_{\del,L}}$ it follows that  $\frac{1}{c}\vrho^{2-\del}(\ha \mu e^{-2\vrho}+E) \leq \oq$ in $[\ha \vrho_2,\vrho_2].$ Therefore, 
 \[
 Z_{j}^\bullet(E) \geq \frac{1}{4\pi}(t_2-t_1)= \frac{2\sqrt{c}}{\pi\del}\left(\vrho_2^{\frac{\del}{2}}- \vrho_1^{\frac{\del}{2}}\right)\geq \frac{2\sqrt{c}}{\pi \del}\vrho_2^{\frac{\del}{2}}\left(1- (\ha)^{\frac{\del}{2}}\right) \geq C\left( \frac{c}{8E}\right)^{\frac{\del}{2-\del}}
 \]
  
  It follows from \eqref{mult} and \eqref{NEV} that as $E\rightarrow 0,$
  \[
  N_E(\mcm^\bullet) \geq \sum_{\zeta_j \leq \mu_{{}_{\del,L}}}  m_j(\zeta_j)  Z_{j}^\bullet (E)  \geq C\left( \frac{c}{4E}\right)^{\frac{1}{2-\del}} (\mu_{{}_{\del,L}})^n\geq
   C\left( \frac{c}{8E}\right)^{\frac{\del}{2-\del}} E^n e^{ n\left( \frac{c}{8E}\right)^{\frac{1}{2-\del}}},
  \]
  and this shows that
  \[
  \log \log N_E(\mcm^\bullet) \geq \frac{1}{2-\del} \log E^{-1} + O(1) \text{ as } E\rightarrow 0.
  \]
  This ends the proof of item T.\ref{itT1} of Proposition \ref{asym-M} and  together with equations \eqref{count-rel}, \eqref{eigen-est1} and \eqref{spec-X0} it also ends the proof of Theorem \ref{main2}.

\subsection{The Proof of Item T.\ref{itT2} of Proposition \ref{asym-M}}
 In this case $V_0(\vrho)= c \vrho^{-2},$ $V_1(\vrho)= \vrho^{-2} (\log\vrho)^{-\eps},$ and  with  $\mcr,$  $\mcp$ given by \eqref{mcr-mcp}, equation \eqref{pde-Rd1} is
\beq\label{pde-R}
\begin{split}
 \biggl( -\frac{d^2 }{d\vrho^2} -c & \vrho^{-2}+ \mcr(\vrho)+\mcp(\vrho) \biggr) u=0, \;\ E>0,\\
 &  u(\vrho_0)=0.
 \end{split}
\eeq
Next we multiply the equation by $\vrho^{2},$ set $u= \vrho^{\ha } w$ and notice that 

\beq\label{conjid1}
\vrho^{-\ha}(\vrho^{2} \frac{d}{d\vrho} )^2  \vrho^{\ha} = (\vrho\frac{d}{d\vrho})^2-\oq, 
\eeq
 then \eqref{pde-R} becomes

\beq\label{conj-pde-R}
\begin{split}
\biggl( -(\vrho \frac{d}{d\vrho})^2   -(c-\oq)  &\, +  \mce(\vrho) \biggr)w=0, \;\ \mce(\vrho)= \vrho^2(\mcr(\vrho)+\mcp(\vrho))\\
& w(\vrho_0)=0.
\end{split}
\eeq

Set $s= \log \vrho,$ and \eqref{conj-pde-R} becomes
\beq\label{cloq}
\begin{split}
\left(-\frac{d^2}{ds^2} -(c-\oq)+\mce(\vrho(s)) \right) w=0, \\
w(s_0)=0, \text{ where }  s_0= \log(\vrho_0).
\end{split}
\eeq
The case $c<\oq:$  Since $\mcp(\vrho)>0,$ and $\vrho^2\mcr(\vrho)\rightarrow 0$ as $\vrho\rightarrow \infty,$ there exists $R>0$ independent of $\mu$ and $E$ such that 
\[
(\oq-c)+ \mce(\vrho)\geq (\oq-c) + \vrho^2 \mcr(\vrho)>0, \text{ for } \vrho_0>R.
\]
Therefore, we have an equation as \eqref{DN-cond},  with $U(s)=(\oq-c) +  \mce(\vrho(s))>0$
  and so   $w$ has no zeros, and again we conclude from \eqref{NEV} that for this choice of $\vrho_0,$ \eqref{zeroE} holds.

The case $c>\oq:$  We set $t=\la s,$ $\la=(c-\oq)^\ha$ and \eqref{cloq} becomes
 \beq\label{cloq1}
\begin{split}
\left(-\frac{d^2}{dt^2} -1+\frac{1}{\la^2} \mce(\vrho(t)) \right) w=0, \\
w(t_0)=0, \text{ where }  t_0= \la\log(\vrho_0),
\end{split}
\eeq
 
The argument used above shows that  the  zeros of the solution $w$ of \eqref{cloq} lie on the set 
\[
\biggl\{ t> t_0: \frac{1}{\la^2} \mce(\vrho(t))< 1\biggr\}
\]
 and as in the first case, we prove upper and lower bounds for the number of zeros of the solution $w(t)$ of \eqref{cloq1} and we start by picking  $\vrho_0$ large so that $|\vrho^2\mcr(\vrho)|<\ha,$ for $\vrho>\vrho_0.$  In this case,
\[
\begin{split}
 \biggl\{\vrho>\vrho_0: & \frac{1}{\la^2} \mce(\vrho) < 1\biggr\}  \subset 
\biggl\{\vrho\geq \vrho_0:  \frac{\vrho^2}{\la^2}(\mcr(\vrho)+\mcp(\vrho) )\leq \tha \biggr\} \\
& \subset \biggl\{\vrho\geq \vrho_0: \frac{\vrho^2}{\la^2}(E+\ha \mu e^{-2\vrho})\leq 2 \biggr\}.
\end{split}
\]

 But we have
\begin{lemma}\label{convex} Let $f(\vrho)>0$ be a $C^\infty$ function such that $f''(\vrho)>0$ and $f(\vrho) >f'(\vrho)$ for 
$\vrho>\vrho_0.$ Then  the function $F(\vrho)= f(\vrho)(E+ \mu e^{-2\vrho})$   is convex and therefore the set 
\[
\Omega(E,\mu,C)=\bigl\{t>t_0: f(\vrho)(E+ \ha \mu e^{-2\vrho})  \leq C\bigr\}
\]
is either empty  or is equal to an interval $[ a,b ]$ with $a \geq \vrho_0.$
\end{lemma}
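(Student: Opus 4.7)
The plan is a short convexity calculation. Differentiating $F(\vrho)=f(\vrho)(E+\mu e^{-2\vrho})$ twice and grouping terms yields
\[
F''(\vrho) \;=\; f''(\vrho)\,E \;+\; \bigl(f''(\vrho) + 4\bigl(f(\vrho)-f'(\vrho)\bigr)\bigr)\,\mu\, e^{-2\vrho}.
\]
In the setting of the proposition $E>0$ and $\mu=\zeta_j\geq 0$, so the first summand is strictly positive by $f''>0$, and the second summand is nonnegative because $f''(\vrho)>0$ and $f(\vrho)-f'(\vrho)>0$ by hypothesis. Thus $F''(\vrho)>0$ on $(\vrho_0,\infty)$ and $F$ is strictly convex. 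The same computation with $\mu$ replaced by $\mu/2$ shows that the function $\tilde F(\vrho)=f(\vrho)(E+\ha\mu e^{-2\vrho})$ actually appearing in the defining inequality of $\Omega(E,\mu,C)$ is also strictly convex.

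Once strict convexity of $\tilde F$ is in hand, the conclusion about $\Omega(E,\mu,C)$ follows immediately: a continuous strictly convex function on $\mathbb{R}$ has closed convex, hence interval, sublevel sets, so $\{\vrho>\vrho_0:\tilde F(\vrho)\leq C\}$ is either empty or an interval $[a,b]$ with $a\geq \vrho_0$ (a priori $b$ could be $+\infty$). In the applications of the lemma in the paper $f$ has polynomial-type growth that dominates the decaying factor $e^{-2\vrho}$ as $\vrho\to\infty$, so $\tilde F(\vrho)\to\infty$ and the right endpoint $b$ is automatically finite; but the statement of the lemma only asserts the interval structure.

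I do not anticipate any real obstacle. The entire argument reduces to the explicit second-derivative computation above together with the observation that the coefficient $f''+4(f-f')$ is positive, which is the only place where \emph{both} hypotheses on $f$ are used. The only minor bookkeeping care is that the $F$ named in the statement carries the coefficient $\mu$ while the defining inequality of $\Omega$ uses $\ha\mu$; since the convexity argument is homogeneous in $\mu\geq 0$, this mismatch is harmless.
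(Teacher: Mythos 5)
Your proof is correct and is essentially the paper's own argument: a direct computation of $F''$ showing it is a sum of nonnegative terms (your grouping $f''E+(f''+4(f-f'))\mu e^{-2\vrho}$ is just a rearrangement of the paper's single displayed identity), followed by the standard fact that sublevel sets of a convex function are intervals. Your side remarks about the $\mu$ versus $\ha\mu$ bookkeeping and the a priori possibility $b=+\infty$ are accurate but inessential, as the paper's application supplies the growth of $f$.
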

 \begin{proof}   We find that
 \[
  F''(\vrho)= f''(\vrho)(E+ \ha \mu e^{-2\vrho}) + 4\mu(f(\vrho)-f'(\vrho))e^{-2\vrho}>0.
 \]
 \end{proof}
Therefore
\beq\label{inc-0}
 \biggl\{\vrho \geq \vrho_0: \frac{\vrho^2}{\la^2}(E+ \ha \mu e^{-2\vrho}) \leq 2\biggr\}= [a,b] \subset 
 \biggl\{ \vrho\geq \vrho_0: \frac{E}{\la^2} \vrho^2 \leq 2\biggr\} \cap \biggl\{ \vrho\geq \vrho_0: \frac{\mu}{2\la^2} \vrho^2 e^{-2\vrho} \leq 2\biggr\}.
\eeq
 
 If we take $\vrho_u$ to satisfy
\[
\frac{E}{\la^2} \vrho_u^2= 2,
\]
then $b\leq \vrho_u$ and  $\frac{E}{\la_2^2} \vrho^2\leq 2,$ for $\vrho\leq \vrho_u.$  We count the zeros of $w(t)$ in the interval $[t_0, t_u]$ and as above one sets
\[
\theta(t)=\tan^{-1}\left( \frac{w(t)}{w'(t)}\right), \text{ where } w(t) \text{ satisfies \eqref{cloq1} }
\]
and then \eqref{difftheta-del} holds, and since $-1\leq \mce(\vrho(t))\leq  2,$ it follows that $\left|\frac{d\theta}{d \vrho}\right| \leq 3$ and so
\beq \label{esttheta0}
\theta(t_u)-\theta(t_0)\leq  3t_u.
\eeq
Therefore if $ Z(w)$ denotes the number of zeros of $w(\vrho(t))$ in an interval $[t_0, t_u],$ we have
 \[
 Z(w)= \frac{2}{\pi} ( \theta(t_u)-\theta(t_0))\leq \frac{3 t_u}{\pi}.
 \]

 So we conclude that for $E$ small
 \[
  Z_j^\bullet(E) \leq \frac{\la}{\pi}\log(\sqrt{\frac{\la^2}{E}})\leq C \log E^{-\ha} \text{ as } E\rightarrow 0
 \]
 
 But on the other hand, according to \eqref{inc-0} we must also have
\[
\mu \leq \frac{4\la^2}{ \vrho^2} e^{2\vrho}  \leq \frac{4\la^2 }{ \vrho_u^2} e^{2\vrho_u}= 2E e^{2\sqrt{\frac{2\la^2}{E}}}\defeq \mu_{{}_u},
\]
and so  $\frac{\vrho^2}{\la^2}(E+\ha e^{-2\vrho}\mu\} \leq 2$ on the interval $[\vrho_0,\vrho_u]$ and $\mu\leq \mu_u,$ 
and in view of \eqref{mult} and  \eqref{NEV}, as $E\rightarrow 0,$  we have
 \[
 \begin{split}
  N_E(\mcm^\bullet) \leq  \sum_{\zeta_j \leq \mu_{{}_U}}  m_j(\zeta_j)  Z_j^\bullet (E) \leq  C \log E^{-\ha} \sum_{\zeta_j \leq \mu_{{}_U}} m_j(\zeta_j) = C \log E^{-\ha}  \mcn_{\mu_{{}_u}}(\Delta_{h(0)}),
 \end{split}
 \]
 and because of \eqref{weyl} this implies that
 \[
  N_E(\mcm^\bullet) \leq   C {\mu^n_{{}_u}} \log E^{-\ha}.
  \]
  It follows from the definition of $\mu_{{}_u}$  that
  \[
 \log \log N_E(\mcm^\bullet)\leq  -\ha \log E + O(1)
 \]
  which gives the upper bound in \eqref{bdhs1}.  
  
  To prove a similar lower bound for $N_E(\mcm^\bullet)$ will find $\vrho_1=\vrho_1(E)<\vrho_2(E)=\vrho_2$ and  $\mu_L$ such that $\vrho_0<\vrho_1(E)$ for $E$ small enough and 
  \[
  \frac{\vrho^2}{\la^2} (E+\ha \mu e^{-2\vrho}) \leq \oq \text{ for } \vrho \in [\vrho_1,\vrho_2] \text{ and } \mu \leq \mu_L,
  \]
  and so  $\frac{1}{\la^2}\mce(\vrho)< \frac34$ for $\vrho\in [\vrho_1,\vrho_2]$ and this implies that
  \[
 [ \vrho_1,\vrho_2]\subset \biggl\{ \rho\geq \rho_0: \frac{1}{\la^2}\mce(\vrho)<1\biggr\}.
 \]
 It then follows from \eqref{difftheta-del} that for $t\in [t_0,t_1],$ $\frac{d\theta}{d t} \geq \oq$ and so $\theta(t_2)-\theta(t_1) \geq \oq(t_2-t_1)$ and
  \[
  Z(\omega) \geq \frac{1}{4\pi}(t_2-t_1), \;\ t_j=\la \log \vrho_j, \;\ j=1,2.
  \]

 Since $E\vrho^2$ is increasing we pick  $\vrho_2$ such that

\beq\label{R1R2}
 \frac{E}{\la^2} \vrho_2^2= \frac18,
\eeq
and so $ \frac{E}{\la^2} \vrho^2\leq \frac18,$ for $\vrho\leq \vrho_2.$ Pick $\vrho_1$ such that
$\vrho_1=\frac{1}{M} \vrho_2,$  with  $M$  to be chosen, and  we want
\[
\frac{\mu}{2\la^2}\vrho_1^2 e^{-2\vrho_1} \leq \frac18,
\]
thus
\beq\label{muS}
\mu \leq  \frac{\la^2}{4 \vrho_1^2} e^{2\vrho_1}\leq \frac{\la^2}{4 \vrho_2^2} e^{2\vrho_2}= 2E e^{2\sqrt{\frac{\la^2}{8E}}}
 \defeq \mu_{{}_L}.
\eeq
Since $t_1=\la \log \vrho_1$ and $t_2=\la\log \vrho_2,$ we can choose $M,$  independently of $E$ and $\mu,$ such that
  \[
 Z(w)\geq  \frac{1}{4\pi} ( t_2-t_1) \geq  \frac{\la}{4\pi}\log\left(\frac{\vrho_2}{\vrho_1}\right)=
 \frac{\la}{4\pi}\log M>1.
 \]
 
Again we conclude from \eqref{mult}, \eqref{NEV} that for $\mu_{{}_L}$ as in \eqref{muS},
 \[
 \begin{split}
  N_E(\mcm^\bullet) \geq  \sum_{\zeta_j \leq \mu_{{}_L}} m_j(\mu_j)  Z_j^\bullet (E) \geq C  \sum_{\zeta_j\leq \mu_{{}_L} }m_j(\mu_j)\geq  C\mu_{{}_L}^n.
 \end{split}
 \]
 This gives that
 \[
  \log \log N_E(\mcm^\bullet)\geq -\ha \log E+ O(1),
   \]
  which is the lower bound in \eqref{bdhs1} and proves  item T.\ref{itT2} of Proposition \ref{asym-M} and  together with equations \eqref{count-rel}, \eqref{eigen-est1} and \eqref{spec-X0} it also ends the proof of Theorem \ref{main1}.
 
 \subsection{The Proof of Item T.\ref{itT3} of Proposition  \ref{asym-M}}

We first consider the case $N=1$  in \eqref{def-GN} and we have  equation \eqref{pde-Rd1} with 

\[
V_0(e^{-\vrho})= \oq\vrho^{-2} + c_1 \vrho^{-2}(\log \vrho)^{-2} \text{ and } V_1(e^{-\vrho})= \vrho^{-2}(\log \vrho)^{-2} (\log \vrho)^{-\eps},
\] 
and  $\mcr,$  $\mcp$ given by \eqref{mcr-mcp}. We multiply the equation by $\vrho^{2},$ set $u= \vrho^{\ha } w$ and use \eqref{conjid1} and we obtain 
 
 \beq\label{conj-pde-R11}
\begin{split}
  \biggl( -(\vrho \p_\vrho)^2 - & c_1(\log \vrho)^{-2} + \vrho^2 \mce(\vrho) \biggr)  w=0, \;\  \mce(\vrho)= \mcr(\vrho)+\mcp(\vrho),  \\
& w(\vrho_0)=0,
\end{split}
\eeq

 Now we set $\xi=\log \vrho,$  multiply \eqref{conj-pde-R11} by $\xi^2$ and set $w=\xi^\ha v,$ use  \eqref{conjid1} and we obtain
\[
\begin{split}
 \biggl( -(\xi \frac{d}{d\xi})^2 & -( c_1-\oq)+  \mce_1(\vrho(\xi)) \biggr) v=0, \;\  \mce_1(\vrho)= \vrho^2(\log \vrho)^2\mce(\vrho), \\
&  v(\xi_0)=0.
\end{split}
\]
As before, if $c_1<\oq,$ $v$ has no zeros for $\xi>\xi_0,$  and so it follows from \eqref{NEV} that $\mcm^\bullet,$ $\bullet=N,D$  have no eigenvalues. 

 If $c_1>\oq,$ set $\la_1=(c_1-\oq)^\ha$ and set $\tau= \la_1\log  \xi,$ and we obtain
\[
\begin{split}
\biggl( - \p_\tau^2 -1 & + \frac{1}{\la_1^2}  \mce_1(\tau(\vrho)) \biggr) v=0,  \;\ \tau=\la_1 \log_{(2)} \vrho,\\
& \quad \quad \quad v(\tau_0)=0.
\end{split}
\]

As before, we assume that $\vrho_0$ is so large  that
\[
\left|\frac{1}{\la_1}^2 \vrho^2(\log \vrho)^2 \mcr(\vrho)\right|\leq \ha, \text{ for } \vrho \geq \vrho_0,
\]
and for this choice of $\vrho_0,$ we have
\beq\label{ineq3}
\begin{split}
& \biggl\{\vrho\geq \vrho_0:  \frac{1}{\la_1^2}\mce_1(\vrho) <1   \biggr\}  \subset   \biggl\{\vrho\geq \vrho_0: \frac{1}{\la_1^2}(\vrho\log \vrho)^2 \mcp(\vrho) \leq 2\biggr\}\subset  \\
& \biggl\{\vrho\geq \vrho_0:  \frac{E}{\la_1^2}(\vrho\log\vrho)^2   \leq 2 \biggr\}  \  \cap \biggl\{\vrho\geq \vrho_0: \frac{\mu}{2\la_1^2}(\vrho\log\vrho)^2 e^{-2\vrho} \leq 2\biggr\}.
\end{split}
\eeq

Since $f(\vrho)=\vrho^2(\log \vrho)^2$ satisfies the hypothesis of Lemma \ref{convex}, we deduce that
\[
\{\vrho\geq \vrho_0: \frac{1}{\la_1^2}(\vrho\log \vrho)^2 (E + \ha \mu e^{-2\vrho}) \leq 2\}= [a,b],
 \]
 but then we must have
 \[
\frac{E}{\la_1^2}(b\log b)^2  \leq 2
\]
and in particular if we take
\[
 \vrho_{u_1} =\frac{2}{\log A_1} A_1 \text{ where }  A_1=\sqrt{\frac{2\la_1^2}{E}},
 \]
 then, for $E$ small, and independently of $\mu,$
 \[
 \vrho_{u_1} \log \vrho_{u_1}= 2A_1( 1+ \frac{\log 2}{\log A_1}- \frac{1}{\log A_1} \log (\log A_1))\geq A_1,
 \]
 and so $b \leq \vrho_{u_1}$ and therefore for $\vrho\leq \vrho_{u_1},$  in view of \eqref{ineq3}, we must have
\beq\label{defm1}
\begin{split}
 \mu\leq  \frac{4 \la_1^2}{(\vrho \log \vrho)^2} e^{2\vrho}\leq \frac{4 \la_1^2}{(\vrho_u \log \vrho_{u_1})^2} e^{2\vrho_{u_1}}  \defeq
 \mu_{{}_{U_1}}.
\end{split}
\eeq

Since $\tau=\la_1 \log \log \vrho,$ we obtain for small $E,$
\begin{gather*}
\theta(\tau_{u_1})-\theta(\tau_0) \leq 3(\tau_{u_1}-\tau_0)\leq 3\tau_{u_1} =3 \la_1 \log \log \vrho_{u_1}\leq 3 \la_1  \log \log \vrho_{u_1} \leq 
C \la_1 \log \log E^{-1},
\end{gather*}
and for $\mu_{{}_{U_1}}$ as in \eqref{defm1}, 
\begin{gather*}
N_E(\mcm^\bullet) \leq C \la_1  \log \log E^{-1}\mcn_{\p X,h_0}(\mu_{{}_{U_1}})\leq C \la_1\log ( \log E^{-1}) ( \mu_{{}_{U_1}})^n.
\end{gather*}
It follows that
\[
\log_{(3)} N_E(H)\leq  \ha \log_{(2)} E^{-1} +O(1),
\]
and this gives the upper bound in \eqref{bdhs2}.   

To obtain the lower bound we find $\vrho_1=\vrho_1(E)< \vrho_2(E)=\vrho_2$ and $\mu_L$ such that $\vrho_0<\vrho_1(E)$ for $E$ small enough and
\[
\frac{\vrho^2(\log \vrho)^2}{\la_1^2}(\ha \mu e^{-2\vrho}+E)\leq \frac14 \text{ for } \vrho\in [\vrho_1,\vrho_2] \text{ and }\mu\leq \mu_L,
\]
and this can be achieved if we take $\vrho_1$ and $\vrho_2$ such that
\[
  \frac{E}{\la_1^2} (\vrho_2\log \vrho_2)^2 \leq \frac{1}{8}, \text{ and }  \vrho_1= \vrho_2^{\frac{1}{M}}, \;\ \text{ with } M \text{ large to be chosen independently of } \mu, E.
\]
For instance, for $E$ small enough, take 
\[
\begin{split}
 \vrho_2= \frac{1}{\log \beta_1} \beta_1, \text{ where } \beta_1=\sqrt{\frac{\la_1^2}{8E}},
\end{split}
\]
and therefore
\[
\vrho_2\log\vrho_2= \beta_1(1-\frac{1}{\log \beta_1} \log_{(2)} \beta_1)< \beta_1.
\]
But we also want $\frac{\mu}{2\la_1^2} \vrho^2(\log \vrho)^2 e^{-2\vrho}\leq \frac18$ and so we need $\mu$ to satisfy

\[
\mu\leq \frac{1}{16(\vrho_1\log \vrho_1)^2}e^{2\vrho_1}\defeq \mu_{{}_{L_1}}.
\]
Since $\tau=\la_1 \log \log \vrho,$ we can choose $M,$  independent of $E$ and $\mu,$ so that 
\begin{gather*}
\theta(\tau_2)-\theta(\tau_1) \geq \frac{1}{4\pi }(\tau_2-\tau_1) \geq  \frac{\la_1}{4\pi}\log\left( \frac{\log \vrho_2}{ \log \vrho_1} \right)=  \frac{\la_1}{4\pi}\log\left( M \right)>1.
\end{gather*}

This implies that for $\mu_{{}_{L_1}}$ as above
\[
N_E(\mcm^\bullet) \geq \mcn_{\p X,h_0}(\mu_{{}_{L_1}})\geq  C (\mu_{{}_{L_1}})^n,
\]
and we conclude that 

\[
\log_{(3)} N_E(\mcm^\bullet)\geq  \log_{(2)} E^{-1} + O(1),
\]
which implies \eqref{bdhs2}.

Next, we consider the case $N=2,$ which corresponds to
\[
\begin{split}
 V_0(\vrho)= &\ \oq \vrho^{-2} + \oq \vrho^{-2}(\log \vrho)^{-2}+ c_2 \vrho^{-2} (\log \vrho)^{-2}(\log_{(2)}\vrho)^{-2}, \\
 V_1(\vrho)= & \  \vrho^{-2} (\log \vrho)^{-2}(\log_{(2)}\vrho)^{-2} (\log \vrho)^{-\eps}, \\
& \mcr(\vrho) \text{ and } \mcp(\vrho) \text{ as in \eqref{mcr-mcp}.}
\end{split}
\]
 This time we set $\eta=\log_{(3)} \vrho$ in equation \eqref{pde-Rd1} and set $u=\bigl(\vrho(\log \vrho) (\log_{(2)} \vrho)\bigr)^\ha v,$ we obtain
\[
\begin{split}
& \left( -(\frac{d}{d\eta})^2 -( c_2-\oq)+  \mce_2(\vrho(\eta)) \right) v=0, \text{ where } \\
& \mce_2(\vrho)= \vrho^2(\log \vrho)^2(\log \log \vrho)^2 (\mcr(\vrho)+\mcp(\vrho)),  \\
& \quad \quad \quad \quad  v(\eta_0)=0.
\end{split}
\]
As before, if $c_2<\oq,$ $v$ has no zeros for $\mu>\mu_0,$ and so $\mcm^\bullet$ has no negative eigenvalues.  If $c_2>\oq,$ set $\la_2=(c_2-\oq)^\ha$ and set $\tau= \la_2 \eta,$ and we obtain
\[
\begin{split}
&\left( - (\frac{d}{d\tau})^2 -1 + \frac{1}{\la_2^2}  \mce_2(\tau(\vrho)) \right) v=0, \\
& \quad \quad \quad v(\tau_0)=0, 
\end{split}
\]
   
   We pick $\vrho_0$ large such that 
   \[
   \left| \frac{1}{\la_2^2} \vrho^2(\log \vrho)^2(\log \log \vrho)^2 \mcr(\vrho)\right| \leq \ha, \text{ if }  \vrho\geq \vrho_0,
   \]
  and for $\mcg_2=(\vrho^2(\log \vrho)^2(\log \log \vrho)^2)^{-1},$ it follows that
\[
\begin{split}
& \biggl\{\vrho\geq \rho_0:  \frac{1}{\la_2^2}  \mce_2(\vrho)<1 \biggr\} \subset \biggl\{\vrho\geq \vrho_0: \frac{1}{\la_2^2 \mcg_2(\vrho)}  (E + \ha \mu e^{-2\vrho}) \leq 2\biggr\}\subset  \\
&  \biggl\{\vrho\geq \vrho_0: \frac{E}{\la_2^2 \mcg_2(\vrho)}  \leq 2\biggr\}\cap \biggl\{\vrho\geq \vrho_0: \frac{\mu}{2\la_2^2\mcg_2(\vrho)} e^{-2\vrho} \leq 2\biggr\}.
\end{split}
\]
and since $f(\vrho)=\frac{1}{\mcg_2(\vrho)}= \vrho^2(\log \vrho)^2(\log \log \vrho)^{2}$ satisfies the hypothesis of Lemma \ref{convex}, we find that
\[
\biggl\{\vrho\geq \vrho_0: \frac{1}{\la_2^2 \mcg_2(\vrho)} (E + \ha \mu e^{-2\vrho}) \leq 2\biggr\}=[a,b ],
\]
and so for $\vrho\in [a,b]$  we must have
\[
\frac{E}{\la_2^2 \mcg_2(\rho)}  \leq 2 \text{ and so } \vrho (\log \vrho) (\log \log \vrho) \leq \sqrt{\frac{2\la_2^2}{E}}=A_2.
\]

If we take $\vrho_{u_2}=\frac{2A_2}{\log A_2(\log \log A_2)},$ then
\[
 \vrho_{u_2} (\log \vrho_{u_2}) (\log \log \vrho_{u_2})= 2A_2(1+ o(1)) \geq A_2, \text{ as } E\rightarrow 0,
 \]
and so $b \leq \vrho_{u_2}.$ Therefore,  for $\vrho \leq \vrho_{u_2}$ we must have
\[
 \mu\leq 4\la_2^2 \mcg_2(\vrho) e^{2\vrho} \leq 4\la_2^2 \mcg_2(\vrho_{u_2}) e^{2\vrho_{u_2}} \defeq\mu_{{u_2}}.
  \]
It follows from \eqref{NEV} that
\[
N_E(\mcm^\bullet) \leq C (\log_{(3)} \vrho_{u_2} ) (\mu_{{u_2}})^n,
\]
which implies that
\beq\label{LLB}
\log_{(3)} N_E(\mcm^\bullet)\leq \log_{(2)}  E^{-1} + O(1) \text{ as } E \rightarrow 0,
\eeq
and this implies the upper bound of \eqref{bdhs4} when $N=2.$

Again, to obtain the lower bound we find $\vrho_1=\vrho_1(E)< \vrho_2(E)=\vrho_2$ and $\mu_L$ such that $\vrho_0<\vrho_1(E)$ for $E$ small enough and 
\[
\frac{1}{\la_2^2 \mcg_2(\vrho)}(E + \ha \mu e^{-2\vrho}) \leq \frac 34 \text{ for } \vrho\in [\vrho_1,\vrho_2] \text{ and }  \mu\leq \mu_L.
\]
We pick $\vrho_2$ such that 
\[
  \frac{E}{\la_2^2} (\vrho_2(\log \vrho_2)(\log \log \vrho_2))^2 \leq \frac{1}{8}.
\]
For instance we can just take
\[
\begin{split}
&  \vrho_2= \frac{1}{(\log \beta_2) (\log \log \beta_2)} \beta_2, \text{ where } \beta_2=\sqrt{\frac{\la_2^2}{8E}}.
\end{split}
\]
We then  and pick $\vrho_1$ such that   $\log \vrho_1=  (\log \vrho_2)^{\frac{1}{M}},$  with $M$ to be chosen, and we need to restrict the values of $\mu$ so that
\[
\mu \leq \frac{\la_2^2}{4} \mcg_2(\vrho_1) e^{2\vrho_1}=\mu_{{}_{L_2}}.
\]

Since $\tau=\la_2 \log_{(3)} \vrho,$ we can choose $M,$ independent of $\mu$ and $E,$ such that
\[
\theta(\tau_2)-\theta(\tau_1) \geq \frac{1}{4\pi }(\tau_2-\tau_1) \geq 
 \frac{C \la_2}{4\pi} \log\left( \frac{\log(\log \vrho_2)}{\log(\log \vrho_1)}\right) \geq
  \frac{C \la_2}{4\pi} \log M>1.
\]
   
This implies that for $\mu_{{}_{L_2}}$ as above
\begin{gather*}
N_E(\mcm^\bullet) \geq \mcn_{\p X,h_0}(\ka_{L2})\geq C (\mu_{{}_{L_2}})^n.
\end{gather*}
This implies that
\[
\log_{(4)} N_E(\mcm^\bullet)\geq  \log_{(3)} E^{-1} + O(1),
\]
which is the lower bound  \eqref{bdhs4} for the case $N=2.$  Because of the choice of $\rho_1$ we get a weaker lower bound than \eqref{LLB}.

The proof in the general case in \eqref{def-GN} follows the same principle.  We pick $\vrho_0$ such that $\log_{(j)} \vrho>1 \text{ for } \vrho\geq \vrho_0,$ and for all  $j\leq N+1.$ If   $\mcg_{(j)}(\vrho)$  defined in \eqref{def-GN} we have
\[
\begin{split}
& V_0(\vrho)= \oq \vrho^{-2} + \oq \sum_{j=1}^{N-1} \mcg_{(j)}j(\vrho)+ c_N \mcg_{(N)}(\vrho), \;\  V_1(\vrho)=  \mcg_{(N)}(\vrho) (\log \vrho)^{-\eps}, \\
& \text{ and with }  \mcr(\vrho) \text{ and }  \mcp(\vrho) \text{ defined in \eqref{mcr-mcp},}
\end{split}
\]

This time we set $\eta=\log_{(N+1)} \vrho$ in \eqref{pde-Rd1} and set $u=\bigl( \mcg_{N}(\vrho)\bigr)^{-\ha}v,$ we obtain
\[
\begin{split}
& \left( -(\frac{d}{d\eta})^2 -( c_N-\oq)+  \mce_N(\vrho(\eta)) \right) v=0, \;\ \mce_N(\vrho)= (\mcg_{(N)}(\vrho))^{-1} (\mcr(\vrho)+\mcp(\vrho)),  \\
& \quad \quad \quad \quad  v(\eta_0)=0.
\end{split}
\]

If $c_N<\oq,$ then $v$ has no zeros for $\eta>\eta_0,$ if $\eta_0$ is large,  and so $\mcm^\bullet$ has no negative eigenvalues.  If $c_N>\oq,$ set $\la_N=(c_N-\oq)^\ha$ and set $\tau= \la_N \eta,$ and we obtain
\[
\begin{split}
&\left( - (\frac{d}{d\tau})^2 -1 + \frac{1}{\la_N^2}  \mce_N(\vrho(\tau)) \right) v=0,  \;\ \tau=\la_N\log_{(N+1)} \vrho,\\
& \quad \quad \quad v(\vrho_0)=0, 
\end{split}
\]
   
 Again, we  follow the steps in the proof of the previous cases and pick $\vrho_0$ such that
 \[
\biggl| \mcg_N(\vrho)^{-2} \mcr(\vrho)\biggr|\leq \ha, \text{ if } \vrho\geq \vrho_0,
\]
and  so the zeros of $v$ will be contained in the set 
\[
\begin{split}
& \biggl\{ \vrho\geq \vrho_0: \frac{1}{\la_N^2} \mce_N(\vrho)<1\biggr\} \subset 
 \biggl\{\vrho\geq \vrho_0: \frac{1}{\la_N^2 (\mcg_N(\vrho))^2} (E + \ha \mu e^{-2\vrho}) \leq 2\biggr\}\subset \\
& \biggl\{\vrho\geq \vrho_0:  \frac{E}{\la_N^2 (\mcg_N(\vrho))^2}  \leq 2\biggr\}\cap \biggl\{\vrho\geq \vrho_0: \frac{\mu}{2\la_N^2(\mcg_N(\vrho))^2}  e^{-2\vrho} \leq 2\biggr\}.
\end{split}
\]
and since $\frac{1}{(\mcg_N(\vrho))^2}$ satisfies the hypothesis of Lemma \ref{convex}, we find that
\[
\{\vrho\geq \vrho_0: \frac{1}{\la_N^2 (\mcg_N(\vrho))^2}(E + \ha \mu e^{-2\vrho}) \leq 2\}=[a,b].
\]
Since 
\[
\begin{split}
&  \frac{1}{\mcg_N(\vrho)}= \vrho(\log \vrho) (\log \log \vrho) \ldots (\log_{(N)}\vrho)  \leq \sqrt{\frac{2\la_N^2}{E}}, \;\ \vrho\leq \vrho_{*}
 \end{split}
\]
and as before, if we define
\[
\vrho_{u_N}= \frac{2A_N}{\log A_N(\log \log A_N) \ldots \log_{(N)} A_N},  \;\ A_N=\sqrt{\frac{2\la_N^2}{E}},
\]
then
\[
\frac{1}{\mcg_N(\vrho_{u_N})}= \vrho_{u_N} (\log \vrho_{u_N}) \ldots \log_{(N)} \vrho_{u_N} \geq A_N,
\]
and therefore, $b \leq  \vrho_{u_N}.$ We must also have $\mu$ such that
\[
\begin{split}
&  \mu  \leq 4\la_N^2 (\mcg_N(\vrho))^2 e^{2\vrho} = 4 {\mcg_N(\vrho_{u_N})}e^{2\vrho_{u_N}} \defeq \mu_{{u_N}}.
\end{split}
\] 
But on the other hand,
\[
\theta(\tau_2)- \theta(\tau_1) \leq \frac{3}{2\pi} \tau_2\leq  \frac{3}{2\pi} \la_{N} \log_{(N+1)} \vrho_{u_N},
\]
and therefore
\[
N_E(\mcm^\bullet)\leq C ( \log_{(N+1)} \vrho_{u_N})( \mu_{u_N})^n,
\]
and this implies that 
\[
\log N_E(\mcm^\bullet)= \vrho_{u_N}\left(2+ \frac{1}{\vrho_{u_N}}\left( \log \mcg_N(\vrho_{u_N})+ \log (C \log_{(N+1)}\vrho_{u_N})\right)\right),
\]
and so
\[
\log_{(2)} N_E(\mcm^\bullet)= \log \vrho_{u_N}(1+O(1)) \text{ as } E\rightarrow 0.
\]
But
\[
\log \vrho_{u_N}= (\log E^{-1})( \ha + O(1)) \text{ as } E\rightarrow 0,
\]
and therefore
\[
\log_{(2)} N_E(\mcm^\bullet)=(\log E^{-1})(C + O(1)) \text{ as } E\rightarrow 0,
\] 
and  this implies that
\beq\label{LLB1}
\log_{(3)} N_E(\mcm^\bullet) \leq \log_{(2)} E^{-1} + O(1),
\eeq
which is  a better bound than \eqref{bdhs4}.

To establish the lower bound in \eqref{bdhs4}  we need to find $\vrho_1=\vrho_1(E)< \vrho_2(E)=\vrho_2$  and $\mu_L$ such that $\vrho_0<\vrho_1(E)$ for $E$ small enough and 
\[
\frac{1}{\la_N^2 \mcg_N(\vrho)}(\ha \mu e^{-2\vrho}+E)\leq \frac 34 \text{ for } \vrho\in [\vrho_1,\vrho_2], \;\ \mu\leq \mu_L.
\]
We pick  $\vrho_2$ such that 
\[
   \frac{E}{\la_N^2 \mcg_N(\vrho_2)} \leq \frac{1}{8},
\]
and we can just take
\[
\begin{split}
 \vrho_2= \frac{\beta_N}{(\log \beta_N) (\log \log \beta_N) \ldots (\log_{(N)} \beta_N)}, \text{ where } \beta_N=\sqrt{\frac{\la_N^2}{8E}},
\end{split}
\]
and we pick $\vrho_1$ such that 
\[
\log_{(N-1)} \vrho_1= \left(\log_{(N-1)} \vrho_2\right)^{\frac{1}{M}} \text{ with } M \text{ large enough, to be chosen independently of } \mu, E,
\]
and only consider the values of $\mu$ such that
\[
\mu\leq \frac{\la_N^2}{4} \mcg_N(\vrho_1)e^{2\vrho_1}=\mu_{{}_{L_N}}.
\]
Since $\tau=\la_N \log_{{}_{(N+1)}} \vrho,$ we can choose $M,$ independently of $E$ and $\mu,$ large enough such that 
\[
\begin{split}
\theta(\vrho_2)-\theta(\vrho_1) \geq \frac{\la_N}{4\pi} \bigl(\log_{(N+1)} \vrho_2-  \log_{(N+1)} \vrho_1\bigr)=  \frac{\la_N}{4\pi}\log\left(\frac{\log_{(N)} \vrho_2}{\log_{(N)}  \vrho_1}\right)=  \frac{\la_N}{4\pi} \log(M)>1
 \end{split}
\]

In view of \eqref{NEV} and \eqref{weyl}, this implies that 
\begin{gather*}
N_E(\mcm^\bullet) \geq C ( \mu_{{}_{L_N}})^n.
\end{gather*}
Then we have
\[
\log N_E(\mcm^\bullet)\geq 2n \vrho_1+ \log(C \mcg_N(\vrho_1))= \vrho_1(2n+ O(1)),
\]
and so
\[
\log_{(2)} N_E(\mcm^\bullet)= \log \vrho_1+ O(1),
\]
and we deduce that
\[
\log_{N+1} N_E(\mcm^\bullet)= \log_{(N)} \vrho_1+ O(1)= \frac{1}{M} \log_{(N)} \vrho_2+ O(1),
\]
and so
\[
\log_{N+2} N_E(\mcm^\bullet)= \log_{(N+1)} \vrho_2+ O(1).
\]
But on the other hand
\[
\log \vrho_2=( \log E^{-1}) ( \ha + O(1)),
\]
and so
\[
\log_{(j)} \vrho_2= \log_{(j)} E^{-1} + O(1), \;\ j\geq 2,
\]
This implies the lower bound  in \eqref{bdhs4} and ends the proof  of item T.\ref{itT3} of Proposition \ref{asym-M}, and  together with equations \eqref{count-rel}, \eqref{eigen-est1} and \eqref{spec-X0} it also ends the proof of Theorem \ref{doubleT}. Notice that, as in the case $N=2,$ because of the choice of $\vrho_1,$ we get a worse lower bound than \eqref{LLB1}.

\appendix

\section{The spectrum of $\mcm_j$}\label{SPEC}

We will show that if  $V_0(\vrho)$ and $V_1(\vrho)$  satisfy the assumptions of either one of the Theorems \ref{main2}, \ref{main1} or \ref{doubleT}, the operators  $\mcm_j^\bullet$ defined in \eqref{EIGj} have no eigenvalues in $[0,\infty),$ $j\in \mn,$ $\bullet=D,N.$  In particular, this implies  that  the operator $\mcm^\bullet$  defined in \eqref{BVP} with boundary conditions $\bullet=D,N.$   has no eigenvalues $E\geq 0.$ If it did, then each $\mcm_j^\bullet$  would have the same eigenvalue.  We prove the following 
\begin{prop}\label{finite-eigenv}  The operators $\mcm^\bullet_j,$ $j\in \mn,$ $\bullet=D,N,$ $j\in \mn,$ defined in \eqref{mj} have no eigenvalues in $[0,\infty).$
\end{prop}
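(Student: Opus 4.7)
The argument proceeds by contradiction. Fix $j\in \mn$ and $\bullet \in \{D,N\}$ and suppose $\mcm_j^\bullet \phi = \la \phi$ for some $\la\geq 0$ and some nonzero $\phi\in L^2([\vrho_0,\infty))$ satisfying the relevant boundary condition at $\vrho_0$. Then $\phi$ solves the second-order ODE
\[
\phi''(\vrho) = \bigl(W_j(\vrho)-\la\bigr)\phi(\vrho),\qquad W_j(\vrho) := e^{-2\vrho}q(\vrho)\zeta_j + U(e^{-\vrho}) + e^{-\vrho}\mcx(\vrho),
\]
for $\vrho > \vrho_0$. Under each of the hypotheses \eqref{hyp-pot}, \eqref{hyp-potTC} and \eqref{def-GN}, the potential $W_j$ tends to zero as $\vrho\to\infty$; the exponentially decaying terms involving $\zeta_j$ and $\mcx$ are manifestly harmless, while the dominant part of $W_j$ is $-V_0(\vrho)$, which is eventually negative.

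For $\la>0$, I would apply Theorem \ref{olver} directly to this ODE. Since the coefficient $W_j-\la$ tends to $-\la<0$ and the perturbations satisfy the integrability conditions required by the Olver-type asymptotic expansion, Theorem \ref{olver} produces two linearly independent solutions of the form $e^{\pm i\sqrt{\la}\,\vrho}(1+o(1))$ as $\vrho\to\infty$. Neither of them, nor any nontrivial linear combination, is square-integrable at infinity, contradicting $\phi\in L^2$.

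For $\la=0$ the equation reduces to $\phi'' = W_j(\vrho)\phi$, with $W_j<0$ eventually. I would apply a Liouville--Green transformation $\phi = |W_j|^{-1/4}\psi$ and invoke Theorem \ref{olver} on the transformed equation, yielding fundamental solutions asymptotic to
\[
\phi_\pm(\vrho) \sim |W_j(\vrho)|^{-1/4}\exp\!\Bigl(\pm i\int^{\vrho}\sqrt{|W_j(s)|}\,ds\Bigr),\qquad \vrho\to\infty.
\]
Under \eqref{hyp-pot} the prefactor is of order $\vrho^{(2-\del)/4}$, which is unbounded for $\del<2$; under \eqref{hyp-potTC} with $c>\oq$, and under \eqref{def-GN} with $c_N>\oq$, the prefactor is of order $\vrho^{1/2}$ up to iterated-log modulations controlled by Lemma \ref{chvar}. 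In the subcritical regimes ($\del<0$ in \eqref{hyp-pot}, or $c<\oq$ in \eqref{hyp-potTC}, or $c_N<\oq$ in \eqref{def-GN}) the equation is asymptotically of Euler type and the fundamental solutions have real characteristic exponents $\vrho^{1/2\pm s}$ for some $s\in[0,\tfrac12)$, none of which decays fast enough to be square-integrable. In every case $\int^{\infty}|\phi|^2\,d\vrho = \infty$, yielding the desired contradiction.

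The main technical point will be justifying the Liouville--Green reduction at $\la=0$ in the critical cases $c=\oq$ and $c_N=\oq$, where the two fundamental solutions degenerate into $\vrho^{1/2}$ and $\vrho^{1/2}\log\vrho$ (or their iterated-log analogues), and where the Olver error terms must be tracked against the identities of Lemma \ref{chvar}. Once this bookkeeping is carried out the non-$L^2$ conclusion at infinity is immediate, and hence no nonzero $\phi\in L^2$ can satisfy $\mcm_j^\bullet\phi=\la\phi$ for any $\la\geq 0$.
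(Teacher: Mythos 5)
Your treatment of $\la>0$, and of $\la=0$ in the sub-quadratic regime $\del>0$, is exactly the paper's: apply Theorem \ref{olver} and observe that every solution oscillates with a non-decaying amplitude, hence is not in $L^2$. The gap is in your $\la=0$ argument for the critical-decay cases \eqref{hyp-potTC} and \eqref{def-GN}. There $f:=|W_j|\sim c\vrho^{-2}$ (with $c=\oq$ identically in the case \eqref{def-GN}), and the error-control function of Theorem \ref{olver} satisfies $f^{-1/4}(f^{-1/4})''\sim -\tfrac{1}{4\sqrt{c}}\,\vrho^{-1}$, which is not integrable at infinity; the total variation $\mcv_{\vrho_1,\infty}(F)$ diverges logarithmically, so the hypothesis of the theorem fails and the Liouville--Green asymptotics $|W_j|^{-1/4}\exp\bigl(\pm i\int\sqrt{|W_j|}\bigr)$ cannot be invoked. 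Indeed they are false in form: for $u''=-c\vrho^{-2}u$ the true solutions are $\vrho^{1/2}\exp\bigl(\pm i\sqrt{c-\oq}\,\log\vrho\bigr)$ (frequency $\sqrt{c-\oq}$, not $\sqrt{c}$), and for the potentials \eqref{def-GN}, whose leading coefficient is exactly $\oq$, there is no leading-order oscillation at all -- the fundamental behavior is $\vrho^{1/2}$ times iterated-log factors. So the obstruction is not confined to the borderline values $c=\oq$, $c_N=\oq$ that you flag as a ``technical point''; it affects the whole critical family, which includes all the potentials of Theorem \ref{doubleT}. (A smaller slip: in the subcritical case $\del<0$ the characteristic exponents are $0$ and $1$, i.e.\ $s=\ha$, outside your stated range $s\in[0,\ha)$, though the non-$L^2$ conclusion is unaffected.)

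The paper circumvents precisely this difficulty by abandoning the WKB route at threshold: for $\la=0$ with critical decay it proves a Hardy-inequality lemma (Lemma \ref{HE}, following Davies), showing directly that any $L^2$ solution of $u''=\vrho^{-2}(-\oq+h(\vrho))u$ with $h=o(1)$ must vanish, by iterating the weighted Hardy estimate with weight $\vrho^{\alpha}$, $\alpha=4-\eps$, and comparing constants. To repair your argument you would either need such an integral-inequality argument, or a further conjugation ($u=\vrho^{1/2}w$, $s=\log\vrho$, and its iterated-log analogues) before applying any asymptotic ODE theorem, rather than a single Liouville--Green step with $f=|W_j|$.
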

\begin{proof}   If $E$ is an eigenvalue of $\mcm^\bullet_j,$ then there exists $\psi \in L^2((\vrho_0,\infty))$ such that
\beq\label{aux}
\psi''(\vrho)=\left(-E+V_0(e^{-\vrho})+ aV_1(e^{-\vrho})+ e^{-\vrho}  \wt \mcx(\vrho)\right) \psi(\vrho), \;\ \wt \mcx(\vrho)=\mcx(\vrho)+e^{-\vrho} q(\vrho)\zeta_j.
\eeq

Now we appeal to Theorems 2.1 and 2.4 from Section 6.2 of \cite{Olver}, which we state in a single theorem:

\begin{theorem}\label{olver}
In a given finite or infinite interval $(a_1, a_2)$, let $f(x)$ be a positive,  twice continuously differentiable function, $g(\vrho)$ a continuous real or complex function, and 
\[
F(\vrho) = \int[f^{-1/4} (f^{-1/4})'' - g f^{-1/2}]d\vrho.
\]
Then in this interval the differential equations
\begin{subequations}
\beq 
 u''(\vrho) = (f(\vrho) + g(\vrho))u(\vrho), \text{ and } \label{E1}
 \eeq
 \beq
 w''(\vrho) = (-f(\vrho) + g(\vrho))w(\vrho), \label{E2}
\eeq
\end{subequations}
 have twice continuously differentiable solutions which in the case \eqref{E1} are given by 
\beq\label{sols1}
\begin{split}
&u_1(\vrho) = f^{-1/4}(\vrho)e^{\int f^{1/2} d\vrho}(1 + \eps_1(\vrho)),\\
&u_2(x) = f^{-1/4}(\vrho)e^{-\int f^{1/2} d\vrho}(1 + \eps_2(\vrho)),
\end{split}
\eeq
and in the case \eqref{E2} are given by
\beq\label{sols2}
\begin{split}
&w_1(\vrho) = f^{-1/4}(\vrho)e^{i \int f^{1/2} d\vrho}(1 + \eps_1(\vrho)),\\
&w_2(x) = f^{-1/4}(\vrho)e^{-i\int f^{1/2} d\vrho}(1 + \eps_2(\vrho)),
\end{split}
\eeq
such that the error terms $\eps_j(\vrho),$ $j=1,2$ satisfy
\beq\label{sols3}
\begin{split}
& |\eps_1(\vrho)| \leq e^{\ha \mcv_{a_1,\vrho}(F)} - 1 \text{ and }  |\eps_2(\vrho)| \leq e^{\ha \mcv_{\vrho,a_2}(F)} - 1,\\
& \ha f^{-1/2}(\vrho)|\eps_1'(x)| \leq e^{\ha \mcv_{a_1,\vrho}(F)} - 1 \text{ and } \ha f^{-1/2}(\vrho)|\eps_2'(x)| \leq e^{\ha \mcv_{\vrho,a_2}(F)} - 1, 
\end{split}
\eeq
provided $ \mcv_{a_1,\vrho}(F) < \infty$. Here $ \mcv_{\alpha,\beta}(F) $ denotes the total variation of $F$ on the interval $(\alpha,\beta).$
\end{theorem} 

We first show that one cannot have an eigenvalue $E>0.$ We will consider the case of Theorem \ref{main2},  $V_0(e^{-\rho})=c\rho^{-2+\del}$ and $V_1(\rho)= \rho^{-2+\del}(\log \rho)^{-\eps},$  the other cases are very similar. We apply Theorem \ref{olver} with 
\[
-f(\rho)=- E - c\rho^{-2+\del}+ c_1 \rho^{-2+\del}(\log \rho)^{-\eps}  \text{ and } g(\rho)= e^{-\rho} \mcv(\rho)
\]
Then on the interval $[\rho_1,\infty),$ with $\rho_1$ large,
\[
w_1(\rho)=f^{-\oq} e^{i \sqrt{f(\rho)}}(1+ \eps_1(\rho)), \;\ w_2(\rho)=E^{-\oq} e^{-i \sqrt{f(\rho)} }(1+ \eps_2(\rho)).
\]
But 
\[
(f^{-\oq})'' f^{-\oq}= -\frac{c}{4}(3-\del)(2-\del)E^{-\tha}\rho^{-4+\del}(1+ o(1)) \text{ and } |g(\rho)|\leq C e^{-\rho},
\] 
and so 
\[
\mcv_{\rho_1,\rho}(F)(\rho)= \int_{\rho_1}^\rho |F'(s)| ds \leq \int_{\rho_1}^\rho (M_1 e^{-s}+ M_2 E^{-\tha}s^{-4+\del}) ds \leq 
   C_1( e^{-\rho_1}+ e^{-\rho}) + C_2 E^{-\tha}(\rho_1^{-3+\del}+\rho^{-3+\del})
\]
and so 
\[
\eps_1(\rho) \leq e^{\ha \mcv_{\rho_1,\rho}(F)(\rho)}-1\leq C( ( e^{-\rho_1}+ e^{-\rho}) + E^{-\tha}(\rho_1^{-3+\del}+\rho^{-3+\del}))
, \;\ \del\geq 0,
\]
and hence for $\rho_1$ large, $1+\eps_1(\rho)\sim c_1+ c_2 \rho^{-3+\del}$ and therefore $w_1\not \in L^2([\rho_1,\infty).$ 
A similar analysis works to estimate $\mce_2.$ Since there are constants $C_1$ and $C_2$ such that $\psi(E,\rho)= C_1 w_1(\rho)+ C_2 w_2(\rho),$ it follows that $\psi \not\in L^2([\rho_1,\infty))$ and so $\psi$ cannot be an eigenfunction.

When $E=0,$ and $\del>0,$ we apply the same argument with $-f=-c\rho^{-2+\del}$ and we obtain
\[
(f^{-\oq})'' f^{-\oq}=  c_1\rho^{-1-\frac{\del}{2}}
\]
and so we find that for $\rho_1$ large, $1+ \eps_j(\rho)\sim c_1+ c_2\rho^{-\del}$ and so there are no eigenfunctions with $E\geq 0.$

The last case does not quite apply when $\del=0$ and we use an argument as in the proof of Hardy's inequality in \cite{Davies}.  We will prove the following 
\begin{lemma}\label{HE} Suppose $u\in L^2([\rho_0,\infty)),$ $h(\rho)$ is continuous and $h(\rho)= o(1)$ as $\rho\rightarrow \infty$ and
\[
u''(\rho)= \rho^{-2}(-\oq+ h(\rho)) u \text{ in } (\rho_0,\infty), \;\ c>0,
\]
then $u(\rho)=0$ on $[\rho_0,\infty).$
\end{lemma}
\begin{proof}  Since $u\in L^2([\rho_0,\infty)),$ by using the equation and the Cauchy-Schwarz inequality, we find that 
$|u'(\rho)|\leq C \rho^{-\frac{3}{2}}$ and hence $|u(\rho)|\leq C \rho^{-\frac{1}{2}},$ and the equation gives $|u''(\rho)|\leq C \rho^{-\frac{-5}{2}}.$  Therefore, if $\alpha\in (1,2),$ $\la=\ha(\alpha-1)>0$ and $\rho_1>\rho,$
\[
\begin{split}
& \int_{\rho_1}^\infty \rho^\alpha (u'(\rho))^2 d\rho= \int_{\rho_1}^\infty \rho^\alpha ( \rho^{-\la}(\rho^\la u)' -\la \rho^{-1} u)^2 d\rho\geq \\
& \la^2 \int_{\rho_1}^\infty \rho^{\alpha-2} (u(\rho))^2 d\rho - \la\int_{\rho_1}^\infty \left((\rho^\la  u)^2\right)' d\rho
\end{split}
\]
and since $\la<\ha,$ we deduce that for $\alpha\in (1,2),$
\[
\frac{(\alpha-1)^2}{4}  \int_{\rho_1}^\infty \rho^{\alpha-2} (u(\rho))^2 d\rho\leq \int_{\rho_1}^\infty \rho^\alpha (u'(\rho))^2 d\rho.
\]
We apply the same argument to the second derivative, and use that $|u'(\rho)|\leq C\rho^{-\tha},$ then for $\alpha\in (1,4),$
\[
\frac{(\alpha-1)^2}{4}  \int_{\rho_1}^\infty \rho^{\alpha-2} (u'(\rho))^2 d\rho\leq \int_{\rho_1}^\infty \rho^\alpha (u''(\rho))^2 d\rho.
\]
We combine these two estimates and we obtain, for $\alpha \in (3,4),$
\[
\frac{(\alpha-1)^2}{4} \frac{(\alpha-3)^2}{4}  \int_{\rho_1}^\infty \rho^{\alpha-4} (u(\rho))^2 d\rho\leq \int_{\rho_1}^\infty \rho^\alpha (u''(\rho))^2 d\rho.
\]
But the equation implies that
\[
\frac{(\alpha-1)^2}{4} \frac{(\alpha-3)^2}{4}  \int_{\rho_1}^\infty \rho^{\alpha-4} (u(\rho))^2 d\rho\leq \int_{\rho_1}^\infty 
(-\oq + h(\rho))^2  \rho^{\alpha-4} (u(\rho))^2 \ d\rho.
\]
Pick $\rho_1$ large and $\alpha=4-\eps$ with $\eps$ small and this implies that $u(\rho)=0$ on $[\rho_1,\infty).$ Then $u=0$ on $(\rho_0,\infty)$ by uniqueness.
\end{proof}

\end{proof}

\section{The  Proof of Proposition \ref{lmzero}}\label{Zeros}

 We follow the arguments  used in the proof of Theorem XIII.8 of \cite{RS4}.  We have already established that $\sigma_{ess}(\mcm_j^\bullet)=[0,\infty),$ $\bullet=D,N$ and that there are no eigenvalues in the essential spectrum.   Then one needs to prove three lemmas:
 \begin{lemma}\label{lemma0} Let $V(\rho) \in C^\infty(I),$ $I \subset \mr$ open and let $E\in \mr.$ Let $u(\rho,E),$ not identically zero,  satisfy
 \[
 u''(\rho,E)= (V(\rho) -E) u(\rho,E) \text{ on } I.
 \]
   If $a_0=a_0(E_0)\in I$ is such that $u(a_0,E_0)=0.$ Then there exists $\del>0$ and a $C^\infty$ function $a(E)$ defined for $|E-E_0|<\del$  such that $a(E_0)=a_0$ and $u(a(E),E)=0.$
 \end{lemma}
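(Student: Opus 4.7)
The plan is to reduce the claim to a direct application of the implicit function theorem applied to the smooth map $(\rho,E)\mapsto u(\rho,E)$ near $(a_0,E_0)$.

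First I would verify the two hypotheses needed. Smooth joint dependence of $u(\rho,E)$ on the parameter $E$ follows from the standard ODE theorem on $C^\infty$ dependence of solutions on parameters (the coefficient $V(\rho)-E$ is $C^\infty$ in $(\rho,E)$, and one may pin down $u$ by prescribing any smooth initial data at a reference point in $I$). So $u\in C^\infty(I\times\mr)$. The second hypothesis is that $\partial_\rho u(a_0,E_0)\neq 0$. This is where the ``not identically zero'' condition is used: since $u(\cdot,E_0)$ satisfies a linear second order ODE with $C^\infty$ coefficients on the open interval $I$, uniqueness of solutions to the Cauchy problem forces $u(\cdot,E_0)\equiv 0$ whenever $u(a_0,E_0)=0$ and $\partial_\rho u(a_0,E_0)=0$ simultaneously. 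By assumption $u(\cdot,E_0)\not\equiv 0$, so we must have $\partial_\rho u(a_0,E_0)\neq 0$.

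With these two facts in hand, I would apply the implicit function theorem to the $C^\infty$ function $F(\rho,E):=u(\rho,E)$ at the point $(a_0,E_0)$. The conditions $F(a_0,E_0)=0$ and $\partial_\rho F(a_0,E_0)\neq 0$ yield $\del>0$ and a $C^\infty$ function $a:(E_0-\del,E_0+\del)\to I$ with $a(E_0)=a_0$ and $F(a(E),E)=u(a(E),E)=0$ for $|E-E_0|<\del$, which is exactly the conclusion.

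There is no substantive obstacle here; the only point that requires a moment's thought is the non-vanishing of $\partial_\rho u(a_0,E_0)$, and that is handled by the uniqueness remark above. Everything else is routine application of the smooth dependence theorem for linear ODEs and the implicit function theorem.
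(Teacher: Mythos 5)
Your proposal is correct and follows exactly the paper's argument: smooth dependence of $u(\rho,E)$ on $(\rho,E)$ from the ODE existence/uniqueness/stability theorems, non-vanishing of $\p_\rho u(a_0,E_0)$ via uniqueness for the Cauchy problem since $u(\cdot,E_0)\not\equiv 0$, and then the implicit function theorem. No differences worth noting beyond your slightly more explicit justification of the non-degeneracy condition.
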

 \begin{proof}  We know from the  existence and uniqueness and stability theorems for odes that $u(\rho,E)$ is a $C^\infty$ function and since $u(\rho,E)$ is not identically zero, if  $u(a_0,E_0)=0,$ then $\p_\rho u(a_0,E_0)\not=0.$  The implicit function theorem then guarantees that there exists a $C^\infty$ function $a(E)$ defined on an interval  $|E-E_0|<\del$ such that $a(E_0)=a_0$ and  $u(a(E),E)=0.$
 \end{proof}
 \begin{lemma}\label{lemma1} As above, let $\bullet=D,N.$  Let $\mcm_j$ be the operators defined in \eqref{EIGj}.  Let $V_0$ and $V_1$ satisfy the hypotheses of either Theorem \ref{main2}, \ref{main1} or \ref{doubleT}. The following statements about $Z_j^\bullet(E)$ are true:
 \begin{enumerate}[1.]
 \item  \label{it1}If $-E<0,$ then $Z_j^\bullet(E)<\infty.$ 
 \item \label{it1a}  If $Z_j^\bullet(E_0)\geq m,$  there exists $\del>0$ so that $Z_j^\bullet(E)\geq m$ for $|E-E_0|<\del.$
 \item \label{it2} $-E_0<-E,$ then $Z_j^\bullet(E)\geq Z_j^\bullet(E_0).$
 \item \label{it3}  If $-E_0$ is an eigenvalue of $\mcm_j^\bullet,$  and $-E_0<-E,$ then $Z_j^\bullet(E)\geq Z_j^\bullet(E_0)+1.$ 
 \item \label{it4}  If $k>j$ and $\mu_k>\mu_j,$ then $Z_j^\bullet(E)\geq Z_k^\bullet(E).$
 \item \label{it5} If $k>j,$ $-E$ is an eigenvalue and $\mu_k>\mu_j,$  then $Z_j^\bullet(E)\geq Z_k^\bullet(E)+1.$
 \end{enumerate}
\end{lemma}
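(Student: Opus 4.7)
The plan is to prove all six items by combining Sturm-oscillation arguments for second-order linear ODEs with the asymptotic description of solutions near infinity given by Theorem \ref{olver}, together with Lemma \ref{lemma0} for continuity in $E$. Write the Cauchy problem \eqref{EIGj} as $-u''+Q_j(\vrho)u=-E\,u$, where the effective potential $Q_j(\vrho)=-V_0(\vrho)+aV_1(\vrho)+e^{-2\vrho}q(\vrho)\zeta_j+e^{-\vrho}\mcx(\vrho)$ tends to $0$ as $\vrho\to\infty$ and the initial condition at $\vrho_0$ is prescribed by the boundary type $\bullet=D,N$.

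For item (1), apply Theorem \ref{olver} on an interval $[\vrho_1,\infty)$ with $-f(\vrho)=-E$ and $g(\vrho)=Q_j(\vrho)$; under the hypotheses of Theorems \ref{main2}--\ref{doubleT}, the auxiliary function $F$ of Theorem \ref{olver} has finite total variation on $[\vrho_1,\infty)$, so for $\vrho_1$ large enough $u_j^\bullet(\cdot,E)$ is a nonzero linear combination of the asymptotic solutions $e^{\pm\sqrt{E}\vrho}(1+o(1))$. Such a function has only finitely many zeros on $[\vrho_1,\infty)$ (it is eventually monotone and of one sign), while $[\vrho_0,\vrho_1]$ carries only finitely many by uniqueness. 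Item (2) follows immediately from Lemma \ref{lemma0}: every zero of the nontrivial solution $u_j^\bullet(\cdot,E_0)$ is simple, so the implicit function theorem provides a $C^\infty$ branch for each of the first $m$ zeros on some $|E-E_0|<\delta$, which can be shrunk so the branches stay disjoint and inside $(\vrho_0,\infty)$.

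For items (3) and (4) (monotonicity in $E$), fix $E<E_0$, put $u_1=u_j^\bullet(\cdot,E_0)$, $u_2=u_j^\bullet(\cdot,E)$, and form the Wronskian $W=u_1u_2'-u_1'u_2$, so that $W'=(E_0-E)u_1u_2$. Both Dirichlet and Neumann initial conditions give $W(\vrho_0)=0$. Let $\vrho_1<\cdots<\vrho_m$ be the zeros of $u_1$ in $(\vrho_0,\infty)$. If $u_2$ had no zero in $(\vrho_k,\vrho_{k+1})$, then $u_1u_2$ would be of constant sign and $W$ strictly monotone there, contradicting the opposite signs of $W(\vrho_k)=-u_1'(\vrho_k)u_2(\vrho_k)$ and $W(\vrho_{k+1})=-u_1'(\vrho_{k+1})u_2(\vrho_{k+1})$; this supplies $m-1$ interior zeros of $u_2$. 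A further zero in $(\vrho_0,\vrho_1)$ comes from the observation that in the Dirichlet case $\vrho_0$ is already a zero of $u_1$, so the same pair-argument applies to $[\vrho_0,\vrho_1]$; in the Neumann case $W(\vrho_0)=0$ while $W(\vrho_1)\ne 0$ has a sign incompatible with the strict monotonicity of $W$ forced by $u_2$ of constant sign. This proves item (3) with $Z_j^\bullet(E)\ge m$. For item (4), Theorem \ref{olver} near the eigenvalue $-E_0$ yields $u_1(\vrho)\sim C_1 e^{-\sqrt{E_0}\vrho}$, while $u_2$ admits an expansion $(Ae^{\sqrt{E}\vrho}+Be^{-\sqrt{E}\vrho})(1+o(1))$; since $\sqrt E<\sqrt{E_0}$, a direct estimate shows $W(\vrho)\to 0$ at infinity. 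If $u_2$ had no zero in $(\vrho_m,\infty)$, then $W$ would be strictly monotone past the nonzero value $W(\vrho_m)=-u_1'(\vrho_m)u_2(\vrho_m)$ and hence bounded away from $0$, contradicting $W\to 0$.

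Items (5) and (6) are the analogue across angular modes: fix $E>0$, set $u_1=u_k^\bullet(\cdot,E)$ and $u_2=u_j^\bullet(\cdot,E)$, reading $\mu_k>\mu_j$ as $\zeta_k>\zeta_j$; then $W'=e^{-2\vrho}q(\vrho)(\zeta_k-\zeta_j)u_1u_2$ exhibits the same sign pattern as in items (3)--(4). The identical Sturm bookkeeping yields $Z_j^\bullet(E)\ge Z_k^\bullet(E)$ in general, and whenever $-E\in\sigma_{pp}(\mcm_k^\bullet)$ so that $u_1$ is $L^2$ and exponentially decaying, the same Wronskian argument produces one additional zero of $u_2$ in $(\vrho_m,\infty)$. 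The main obstacle throughout is the Wronskian-at-infinity step used for items (4) and (6): one must exploit the precise exponential asymptotics from Theorem \ref{olver} to obtain $W(\vrho)\to 0$ and then reconcile this with the strict monotonicity forced by a sign-preserving $u_2$ past the last zero of $u_1$. All remaining pieces are classical Sturm--Liouville; the only source of case splitting is the Dirichlet vs.\ Neumann condition at $\vrho_0$, which affects solely the verification of the leftmost interval $(\vrho_0,\vrho_1)$.
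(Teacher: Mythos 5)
Your overall strategy is the same as the paper's: item 1 from the asymptotics/zero-counting already available, item 2 from Lemma \ref{lemma0}, and items 3--6 by the Sturm--Wronskian comparison with Theorem \ref{olver} controlling the behavior at infinity. Two specific steps, however, fail as written. First, in item 1 you invoke Theorem \ref{olver} with $f=E$ constant and $g=Q_j$; then the total variation of $F$ is $E^{-1/2}\int |Q_j|\,d\vrho$, which is infinite under the hypotheses of Theorem \ref{main2} when $\del\in[1,2)$, since $V_0\sim c\vrho^{-2+\del}$ is not integrable at infinity. The remedy is to absorb the potential into $f$, i.e.\ take $f=E+V_0-aV_1$ and let $g$ be the exponentially small remainder, exactly as in the paper's Appendix A; the phase $\int f^{1/2}$ is then $\sqrt{E}\,\vrho+o(\vrho)$, and the conclusions you need (one growing and one decaying solution, hence finitely many zeros and the decay comparisons in items 4 and 6) survive.

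Second, and more seriously, the claim that $W(\vrho)\to 0$ at infinity is false in general for item 6: there both solutions are taken at the \emph{same} energy $E$, the eigenfunction $u_k(\cdot,E)$ decays like $e^{-\sqrt{E}\vrho}$, but $u_j(\cdot,E)$ generically carries a growing component $A e^{\sqrt{E}\vrho}$, so $W$ tends to the nonzero constant $2AC\sqrt{E}$ rather than to $0$ (your mechanism does work for item 4, where the energies differ and the product decays like $e^{(\sqrt{E}-\sqrt{E_0})\vrho}$). The argument is rescued by a sign, not a limit-zero, statement: with your convention $W'=(\zeta_j-\zeta_k)e^{-2\vrho}q\,u_k u_j$, which is integrable because $e^{-2\vrho}$ beats the $O(1)$ product, so $W$ has a finite limit; normalizing $u_k>0$ and (for contradiction) $u_j>0$ past the last zero $\vrho_m$ of $u_k$, the Olver asymptotics (including the bounds on $\eps_j'$) force that limit to be $\geq 0$, while $W(\vrho_m)=-u_k'(\vrho_m)u_j(\vrho_m)\leq 0$ together with strict monotonicity forces the limit to be $<0$. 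The same care repairs the parenthetical in your item 4: $W(\vrho_m)$ need not be nonzero (if $u_2(\vrho_m)=0$), and "strictly monotone past a nonzero value" does not by itself give "bounded away from $0$"; what saves you is that the monotonicity direction always points away from zero given the sign normalization. (Your displayed sign $W'=(E_0-E)u_1u_2$ for $W=u_1u_2'-u_1'u_2$ is also off by a factor $-1$; harmless, but it matters when checking these directions.) This corrected endgame is in substance what the paper's terse remark that ``the same argument works'' for item 6 amounts to.
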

\begin{proof}  We have already shown that  item \ref{it1} is true.   Lemma \ref{lemma0} says that if $\rho_1<\rho_2< \ldots < \rho_{m-1}< \rho_m\in (\rho_0,\infty)$ are such that  $u_j^\bullet(\rho_j,E_0)=0,$ then there exist $\del>0$  and $C^\infty$ functions $r_j(E)$ defined in $|E-E_0|<\del$ such that $r_j(E_0)=\rho_j$ and that $u_j^\bullet(r_j(E),E)=0,$ and therefore  $Z_j^\bullet(E)\geq m.$

To prove  item \ref{it2}, we first consider the Dirichlet problem. This is the standard form of Sturm oscillation theorem. Let $\rho_0< \rho_1 < \ldots < \rho_n$ be the zeros of $u_j^D(\rho, E_0).$ We claim that  $u^D_j(\rho,E)$ has a zero in each of the intervals $(\rho_j, \rho_{j+1}).$ To see that suppose that $u^D_j(\rho,E)$ does not have a zero in this interval.  By possibly multiplying the functions by $-1,$ we may assume that $u^D_j(\rho,E)>0$   and $u^D_j(\rho,E_0)>0$ in $(\rho_j, \rho_{j+1}).$  In this case the $u_j'(\rho_j,E_0)>0$ and $u_j'(\rho_{j+1},E_0)<0.$  Therefore the integral
\[
\begin{split}
& I^D= \int_{\rho_m}^{\rho_{m+1}} \left[ (u^D_j)'(\rho,E_0) u_j^D(\rho,E)-u_j^D(\rho,E_0) (u_j^D)'(\rho,E)\right]' \ d\rho= \\
&(u^D_j)'(\rho_{m+1},E_0) u_j^D(\rho_{m+1},E)- (u^D_j)'(\rho_{m},E_0) u^D_j(\rho_{m},E)\leq 0.
\end{split}
\]
But on the other hand,
\[
\begin{split}
& I^D=\int_{\rho_m}^{\rho_{m+1}} \left[ (u^D_j)''(\rho,E_0) u_j^D(\rho,E)-u_j^D(\rho,E_0) (u_j^D)''(\rho,E)\right] \ d\rho= \\
& (E_0-E)\int_{\rho_m}^{\rho_{m+1}} u_j^D(\rho,E_0) u_j^D(\rho,E) \ d\rho>0.
\end{split}
\]
If $E_0$ is an eigenvalue, we claim that $u_j^D(\rho,E)$ also has a zero in $(\rho_n,\infty).$ To see that we apply the same idea, but now one needs to justify the convergence of the integral from $\rho_n$ to $\infty.$ We appeal again to Theorem \ref{olver}. If we take $f=E_0,$ and $f=E,$ the solutions of \eqref{EIGj}, for $\rho$ large  are of the form 
\[
\begin{split}
& u_j^D(\rho,E_0)= E_0^{-\oq}\bigl( C_1 e^{-\rho \sqrt{E_0}} (1+\eps_1(\rho)) + C_2 e^{\rho \sqrt{E_0}} (1+\eps_2(\rho))\bigr), \\
& u_j^D(\rho,E)= E^{-\oq}\bigl( \wt C_1 e^{-\rho \sqrt{E}} (1+\eps_1(\rho)) + \wt C_2 e^{\rho \sqrt{E}} (1+\eps_2(\rho)) \bigr).
\end{split}
\]
But since $E_0$ is an eigenvalue,  $u_j^D(\rho,E_0)\in L^2((\rho_0,\infty))$ and $C_2=0.$  Since $E_0>E,$ then integrals will involving  terms of the type $e^{\rho (\sqrt{E}-\sqrt{E_0})}O(1)$ which will converge if $E_0>E.$

As for the Neumann problem, the same argument applies with the exception of the interval $(\rho_0,\rho_1).$  In this case, we know from the assumptions made in \eqref{EIGj} that 
\[
(u_j^N)'(\rho_0,E_0)=(u_j^N)'(\rho_0,E)=0, \;\ u_j^N(\rho_0,E_0)=u_j^N(\rho_0,E)=1,
\]
and we also know that $u_j^N(\rho_1,E_0)=0.$ In this case we would have $u_j^N(\rho,E)>0$ and $u_j^N(\rho,E_0)>0$ in $(\rho_0,\rho_1),$ and so we would have
\[
(u_j^N)'(\rho_1,E_0)\leq 0 \text{ and } u_j^N(\rho_1,E)\geq 0
\]
and therefore, the integral
\[
\begin{split}
& I^N= \int_{\rho_0}^{\rho_{1}} \left[ (u^N_j)'(\rho,E_0) u_j^N(\rho,E)-u_j^N(\rho,E_0) (u_j^N)'(\rho,E)\right]' \ d\rho= 
(u_j^N)'(\rho_1,E_0) u_j^N(\rho_1,E) \leq 0.
\end{split}
\]
But as above,
\[
\begin{split}
& I^N= (E_0-E)\int_{\rho_0}^{\rho_{1}} u_j^N(\rho,E_0) u_j^N(\rho,E) \ d\rho>0.
\end{split}
\]

The same argument can be used to show that $Z_j^\bullet(E)\geq Z_k^\bullet(E),$ provided $j>k$ and $\mu_k>\mu_j.$  In this case, we suppose that $\rho_1<\rho_2 \ldots <\rho_n$ are the zeros of $u_k^D(\rho,E),$ and we want to show that $u_j(\rho,D)$ has a zero in $(\rho_m,\rho_{m+1}).$  We assume there are no zeros of $u_j(\rho,E)$ in $(\rho_m,\rho_{m+1})$ and we may assume that $u_k^D(\rho,E)> 0$ and $u_j^D(\rho,E)> 0$ on $\rho_m,\rho_{m+1}$ and
\[
(u_k^D)'(\rho_m,E)>0,  (u_k^D)'(\rho_{m+1},E)<0, \text{ and } u_j^D(\rho_m,E)\geq 0,  u_j^D(\rho_{m+1},E)\geq 0.
\]
Then the integral
\[
\begin{split}
& I^D= \int_{\rho_m}^{\rho_{m+1}} \left[ (u^D_k)'(\rho,E) u_j^D(\rho,E)-u_k^D(\rho,E) (u_j^D)'(\rho,E)\right]' \ d\rho= \\
& (u^D_k)'(\rho_{m+1},E) u_j^D(\rho_{m+1},E)- (u^D_k)'(\rho_{m},E) u_j^D(\rho_{m},E)\leq 0.
\end{split}
\]
But on the other hand
\[
I^D= \int_{\rho_m}^{\rho_{m+1}} (\mu_k-\mu_j) e^{-2\rho} u_k^D(\rho,E) u_j^D(\rho,E) \ d\rho>0.
\]
The same argument works for the Neumann problem and to prove item \ref{it5}.
\end{proof}

 \begin{lemma}\label{lemma2}   Let   $-\la_{j,k}^\bullet,$ $k=1,2,\ldots,$ denote the  eigenvalues of $\mcm_j^\bullet,$ $\bullet=D,N.$ The following are true
 \begin{enumerate}[I.]
 \item \label{ita}The eigenvalues have multiplicity one.
 \item \label{itb} If $E\geq 0,$ $m\in \mn$ and  $Z_j^\bullet(E)\geq m,$ then $-\la_{j,m}< -E.$  In particular 
\[
N_{E}(\mcm_j^\bullet)\geq Z_j^\bullet(E).
\]
\item \label{itc} $Z_j^\bullet(\la_{j,k}^\bullet)=k-1.$
\end{enumerate}
 \end{lemma}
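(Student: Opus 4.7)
The plan is to dispatch the three parts in order, bootstrapping Parts \ref{itb}--\ref{itc} from \ref{ita} together with Lemma \ref{lemma1}.

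Part \ref{ita} will follow from Cauchy uniqueness for the second-order ODE $(\mcm_j^\bullet+E)u=0$ on $(\vrho_0,\infty)$: the imposed boundary condition at $\vrho_0$ (either $u(\vrho_0)=0$ or $\p_\vrho u(\vrho_0)=0$) is a one-dimensional constraint on the two-dimensional solution space, so any eigenspace is at most one-dimensional.

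For Part \ref{itb}, I would build an $m$-dimensional test subspace and invoke min-max. Suppose $Z_j^\bullet(E)\geq m$, and let $\vrho_0<\rho_1<\cdots<\rho_m$ be the first $m$ zeros of $u_j^\bullet(\cdot,E)$. Setting $\rho_0=\vrho_0$, define $\phi_i = u_j^\bullet(\cdot,E)\,\mathbbm{1}_{[\rho_{i-1},\rho_i]}$, extended by zero. Each $\phi_i$ is in $H^1$, satisfies the prescribed boundary condition at $\vrho_0$ when $i=1$, and vanishes at $\rho_i$, so it lies in the form domain of $\mcm_j^\bullet$. Integration by parts on $[\rho_{i-1},\rho_i]$ produces no boundary terms: at each $\rho_i$ because $u_j^\bullet(\rho_i,E)=0$, and at $\vrho_0$ (when $i=1$) because either $u_j^D(\vrho_0,E)=0$ or $(u_j^N)'(\vrho_0,E)=0$. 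This yields $\langle \mcm_j^\bullet\phi_i,\phi_i\rangle = -E\|\phi_i\|^2$. Disjointness of supports gives $Q(\phi)/\|\phi\|^2 = -E$ on $\operatorname{span}(\phi_1,\ldots,\phi_m)$, and Theorem \ref{min-max} yields $-\la_{j,m}^\bullet \leq -E$. To promote this to the strict inequality in \ref{itb}, I would invoke Lemma \ref{lemma1}.\ref{it1a}: there is $\del>0$ with $Z_j^\bullet(E+\del/2)\geq m$, and the same argument applied at $E+\del/2$ gives $\la_{j,m}^\bullet \geq E+\del/2 > E$. The inequality $N_E(\mcm_j^\bullet)\geq Z_j^\bullet(E)$ is then immediate.

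For Part \ref{itc}, the upper bound $Z_j^\bullet(\la_{j,k}^\bullet)\leq k-1$ is the contrapositive of \ref{itb}, since $Z_j^\bullet(\la_{j,k}^\bullet)\geq k$ would force $\la_{j,k}^\bullet>\la_{j,k}^\bullet$. For the matching lower bound I would induct on $k$: the base case $Z_j^\bullet(\la_{j,1}^\bullet)=0$ is Part \ref{itb} at $m=1$, $E=\la_{j,1}^\bullet$; for the inductive step, supposing $Z_j^\bullet(\la_{j,k}^\bullet)=k-1$, Lemma \ref{lemma1}.\ref{it3} applied at $\la_{j,k}^\bullet$ gives $Z_j^\bullet(E)\geq k$ for $E<\la_{j,k}^\bullet$, and Lemma \ref{lemma1}.\ref{it2} propagates this to $Z_j^\bullet(\la_{j,k+1}^\bullet)\geq k$, which combines with the upper bound to give equality.

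The main obstacle is the min-max calculation in Part \ref{itb}: checking that the piecewise test functions $\phi_i$ are admissible in the form domain of $\mcm_j^\bullet$ in both the Dirichlet and Neumann cases, and that the Neumann datum $(u_j^N)'(\vrho_0)=0$ does the work of killing the boundary term at $\vrho_0$ for $\phi_1$. The promotion to strict inequality via Lemma \ref{lemma1}.\ref{it1a} is a clean alternative to a more delicate direct perturbative construction at the last zero $\rho_m$.
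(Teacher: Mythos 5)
Your proposal is correct and follows essentially the same route as the paper's proof: simplicity from ODE Cauchy uniqueness, the min-max bound via disjointly supported truncations of $u_j^\bullet(\cdot,E)$ between consecutive zeros with the strict inequality obtained by perturbing $E$ through Lemma \ref{lemma1}.\ref{it1a}, and item \ref{itc} from the contrapositive of \ref{itb} together with an induction using Lemma \ref{lemma1}.\ref{it2}--\ref{it3}. The only cosmetic differences are that the paper builds the test space from all $M$ zeros rather than the first $m$ and cites the min-max principle from Reed--Simon instead of Theorem \ref{min-max}, neither of which changes the argument.
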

 \begin{proof} The eigenvalues are simple by the uniqueness theorem for ordinary differential equations.  By dividing an eigenfunction $\psi^\bullet(\rho)$ by a constant, one may assume it will satisfy $\psi^D(\rho_0)=0$ and $(\psi^D)'(\rho_0)=1$ or $(\psi^N)'(\rho_0)=0$ and $\psi^N(\rho_0)=1,$ and one cannot have two different solutions with the same Cauchy data.

 We will show that there exist at least $m$ eigenvalues $\la_{j,k}^\bullet$ which are less than $-E.$ Let   $u_j^\bullet(\rho,E)$ be the solution of \eqref{EIGj} and let $\rho_1<\rho_2< \ldots <\rho_{M},$ $M\geq m,$ and  $\rho_0<\rho_1,$  denote its zeros (not equal to  $\rho_0$ in case $\bullet=D$) and let
 \[
 \psi_k^\bullet(\rho)= \left\{\begin{array}{cc}  u_j^\bullet(\rho,E), \text{ if } \rho_{k}\leq \rho \leq \rho_{k+1}, \; k=0, 1, \ldots, M-1 \\
  0 \text{ otherwise} \end{array} \right.
 \]
 Obviously, $\lan \psi_i^\bullet, \psi_k^\bullet \ran=0,$ if $i \not= k.$ Let  $\mcu$ be the $M$-dimensional subspace spanned by $\psi_k^\bullet.$  If $\psi^\bullet=\sum_{j=1}^{m_0} a_k \psi_k^\bullet,$ one can check that
 \[
 \lan \mcm_j^\bullet \psi^\bullet, \psi^\bullet\ran= -E\lan \psi^\bullet,\psi^\bullet\ran,
 \]
 it follows from  the min-max principle, see Theorem XIII.2 of \cite{RS4}, that  $-\la_{j,M}^\bullet\leq - E$ and in particular $\la_{j,m}^\bullet\leq - E.$  But it follows from item \ref{it1a} of Lemma \ref{lemma1} that if $\eps>0$ is small enough,  
 $Z_j^\bullet(E+\eps)\geq m,$ but then we have shown that in fact $-\la_m(\mcm_j^\bullet)\leq-E-\eps<-E.$  This proves item \ref{itb}.

 It is true that $Z_j^\bullet(\la_{j,1}^\bullet)\geq 0.$  Suppose that $Z_j^\bullet(\la_{j,k-1}^\bullet)\geq k-2.$  But it follows from item \ref{it4} of Lemma \ref{lemma1}  that $Z_j^\bullet(\la_{j,k}^\bullet)\geq Z_j^\bullet(\la_{j,k-1}^\bullet)+1\geq k-1.$  On the other hand, notice that if  $Z_j^\bullet(\la_{j,k}^\bullet)>k-1,$ then by item \ref{itb}, $-\la_{j,k}< -\la_{j,k}.$ So we must have $Z_j^\bullet(-\la_{j,k}^\bullet)\leq k-1.$   This proves item \ref{itc}.
 
\end{proof}

Now we can prove \eqref{zer-eigen}.  We know from item \ref{itb} of Lemma \ref{lemma2} that $N_{E}(\mcm_j^\bullet)\geq Z_j^\bullet(E).$   Since  $Z_j^\bullet(E)<\infty$ if $-E<0,$ suppose that  $N_{E}(\mcm_j^\bullet)> Z_j^\bullet(E)=m,$ then by definition this implies that $-\la_{j,m+1}^\bullet \leq -E,$ but then item \ref{it3} of Lemma \ref{lemma1} and item \ref{itc} of Lemma \ref{lemma2} imply that
\[
Z_j^\bullet(E) \geq  Z_j^\bullet(\la^\bullet_{j,m+1})+1=m+1.
\]
This proves \eqref{zer-eigen}.


\end{document}